\DeclareSymbolFont{cyrletters}{OT2}{wncyr}{m}{n}
\DeclareMathSymbol{\Sha}{\mathalpha}{cyrletters}{"58}
\definecolor{refkey}{rgb}{1,1,1}
\definecolor{labelkey}{rgb}{1,1,1}
\definecolor{cite}{rgb}{0.9451,0.2706,0.4941}
\definecolor{ruri}{rgb}{0.0078,0.4022,0.8010}
\makeindex \setcounter{tocdepth}{1}
\def\F{{\rm \mathbb{F}}}
\def\Z{{\rm \mathbb{Z}}}
\def\Q{{\rm \mathbb{Q}}}
\def\C{{\rm \mathbb{C}}}
\def\R{{\rm \mathbb{R}}}
\def\T{{\rm \mathbb{T}}}
\def\p{{\rm \mathfrak{p}}}
\def\O{{\rm \mathcal{O}}}
\def\Jac{{\rm Jac}}
\def\ord{{\rm ord}}
\def\avg{{\rm avg}}
\def\Aut{{\rm Aut}}
\def\Pic{{\rm Pic}}
\def\e{{\rm \epsilon}}
\def\GL{{\rm GL}}
\def\Gal{{\rm Gal}}
\def\rk{{\rm rk}}
\def\Hom{{\rm Hom}}
\def\End{{\rm End}}
\def\Sel{{\rm Sel}}
\numberwithin{equation}{section}
\newtheorem{theorem}{Theorem}[section]
\newtheorem{lemma}[theorem]{Lemma}
\newtheorem{assumption}[theorem]{Assumption}
\newtheorem{question}{Question}
\newtheorem{remark}[theorem]{Remark}
\newtheorem{definition}[theorem]{Definition}
\newtheorem{example}[theorem]{Example}
\newtheorem{conjecture}[theorem]{Conjecture}
\newtheorem{corollary}[theorem]{Corollary}
\newtheorem{proposition}[theorem]{Proposition}
\def\shownotes{\def\inline##1##2##3{ \begin{adjustwidth}{3mm}{7mm}\mbox{}\par \noindent
{\color{##1}\hspace{-1.9cm}{\large ##2}\vspace{-\baselineskip}\\##3}
\newline\end{adjustwidth}} \def\inlinewide##1##2##3{ \begin{adjustwidth}{0mm}{0cm}\mbox{}\par \noindent
{\color{##1}\hspace{-1.6cm}{\large ##2}\vspace{-\baselineskip}\\##3}
\newline\end{adjustwidth}}  \def\marg##1##2##3{\marginnote{\color{##1}{\large ##2}\\{\small ##3}}[-.8cm]}}
\begin{document}
\setlength{\parskip}{2pt} 
\setlength{\parindent}{8pt}
\title{Quadratic twists of abelian varieties with \\real multiplication }
\author{Ari Shnidman}
\address{Department of Mathematics, Boston College, Chestnut Hill, MA 02467} 
\email{shnidman@bc.edu}

\maketitle

\begin{abstract}
Let $F$ be a totally real number field and $A/F$ a principally polarized abelian variety with real multiplication by the ring of integers $\O$ of a totally real field.  Assuming $A$ admits an $\O$-linear 3-isogeny over $F$, we prove that a positive proportion of the quadratic twists $A_d$ have rank 0.  We also prove that a positive proportion of $A_d$ have rank $\dim A$, assuming the groups $\Sha(A_d)$ are finite.  If $A$ is the Jacobian of a hyperelliptic curve $C$, we deduce that a positive proportion of twists $C_d$ have no rational points other than those fixed by the hyperelliptic involution. 
            
\end{abstract}

\makeatletter
\makeatother

\section{Introduction}

As part of his study of the modular Jacobians $J_0(p)$ \cite{Mazur}, Mazur showed that there are geometrically simple abelian varieties $A/\Q$ of arbitrarily large dimension having finite Mordell-Weil group.  The existence of rank 0 abelian varieties also follows from results of Gross-Zagier, Kolyvagin-Logachev, and Bump-Friedberg-Hoffstein \cite{BFH, gz, KL}.  These examples are obtained by considering quadratic twists of simple quotients of $J_0(N)$, for any $N$.           

Of course, one expects much more than the mere existence of such abelian varieties.  Presumably, a significant  proportion of all abelian varieties should have rank 0.  Unfortunately, it is hard to prove quantitative results in this direction.  Indeed, it was only recently that Bhargava and Shankar proved that a positive proportion of elliptic curves over $\Q$ have rank 0 \cite{BS2}.  Geometry-of-numbers methods have since been deployed to study various algebraic families of abelian varieties of higher dimension  \cite{BG2, BGW, PoonenStoll2,Ananth},  
yet the following question still seems to be open.

\begin{question}\label{question}
Is there a non-trivial, algebraic family of abelian varieties of dimension $g> 1$ over $\Q$ for which a positive proportion of members are geometrically simple and have rank $0?$
\end{question}

In this note, we use geometry-of-numbers to provide many such examples.  Our families turn out to be quadratic twist families of simple quotients of $J_0(N)$, but our approach is via Selmer groups and does not use any automorphic input.
 
In fact, recent work of the author, with Bhargava, Klagsbrun, and Lemke Oliver, already answered Question \ref{question} over certain number fields.  For instance, let $J$ be the Jacobian of the genus 3 curve $y^3 = x^4 -x$, and let $L$ be any number field containing the cyclotomic field $\Q(\zeta_9)$.  For each $d \in L^\times/L^{\times2}$, let $J_d$ be the $d$th quadratic twist of $J$.  By studying the 3-Selmer group of $J_d$, we showed that at least $50\%$ of the twists $J_d$ have rank 0 \cite[Thm.\ 11.5]{BKLS}.  One can construct similar examples using other abelian varieties with complex multiplication (CM).  Indeed, our proof leverages the fact that $J$ has an endomorphism of degree 3 over $L$.  Since an abelian variety cannot have CM over $\Q$, this approach does not readily give examples of the desired type over $\Q$.  

This paper considers a different class of abelian varieties with extra endomorphisms: those with {\it real multiplication}  These have the advantage that their extra endomorphisms may be defined over $\Q$.  The disadvantage is that they do not admit  endomorphism of degree 3.  To compensate for this, we will impose a bit of level structure.   

\subsection{Real multiplication and $\O$-linear isogenies}
Let $F$ be a number field, and $(A, \lambda)$ a polarized abelian variety over $F$ of dimension $g$.  Let $K$ be a totally real number field of degree $g$ over $\Q$, and let $\O$ be a subring of finite index in the ring of integers $\O_K$.  We say $(A, \lambda)$ has {\it real multiplication} (RM) by $\O$ if there is an embedding $\iota \colon \O \hookrightarrow \End_F(A)$ such that the polarization $\lambda \colon A \to \hat A$ is $\O$-linear with respect to the induced actions of $\O$ on $A$ and $\hat A$.  These abelian varieties are sometimes said to be of {\it $\GL_2$-type}. 

Any elliptic curve has RM by $\Z$.  But among abelian varieties of dimension greater than 1, those with RM are quite special.  Special in the sense that they lie on proper closed subvarieties of the corresponding moduli space, but also because their arithmetic parallels the arithmetic of elliptic curves in certain respects.  For example, if $F = \Q$, then it follows from a result of Ribet \cite{Ribet} that $A$ is a quotient of a modular Jacobian $J_0(N)$ for some $N$.  Conversely, the simple quotients of $J_0(N)$ have RM, so this gives a steady source of examples of RM abelian varieties.

We will consider abelian varieties $A$ with RM by $\O$ and with an additional bit of level structure.  We say $A$ {\it admits an $\O$-linear $3$-isogeny} if there is another abelian variety $B$ with RM by $\O$, and an $\O$-equivariant 3-isogeny $\phi \colon A \to B$ over $F$.  The kernel of $\phi$ is then an $\O$-module (scheme) which is annihilated by an ideal $I$ of index 3 in $\O$.  We call $I$ the {\it kernel ideal} of $\phi$.\footnote{This terminology is used in \cite{Waterhouse}, but with a slightly different meaning.}  

\subsection{Main results}     
For each $d \in F^\times/F^{\times 2}$, we write $A_d$ for the quadratic twist by the character $\chi_d \colon \Gal(\bar F/F) \to \{\pm 1\}$ corresponding to the extension $F(\sqrt{d})/F$.  There is a natural {\it height} function on $F^\times/F^{\times2}$, given by 
\[H(d) = \prod_{\p \colon \ord_\p(d) \mbox{\tiny{ odd}}} N(\p),\] 
the product being over finite places $\p$ of $F$.  This gives an ordering of the quadratic twist family.  

Our first result answers Question \ref{question} affirmatively, and provides a large class of examples.  
\begin{theorem}\label{main intro}
Let $F$ be a totally real number field.  Let $(A, \lambda)$ be a polarized abelian variety over $F$ with $\mathrm{RM}$ by $\O$, and with $3 \nmid \deg(\lambda)$.  Suppose $A$ admits an $\O$-linear $3$-isogeny 
whose kernel ideal $I$ is an invertible $\O$-module. 
Then a positive proportion of  twists $A_d$, $d \in F^\times/F^{\times2}$, have rank $0$.
\end{theorem}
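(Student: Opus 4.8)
The plan is to study the $\phi$-Selmer groups $\Sel_\phi(A_d)$ (and the dual $\hat\phi$-Selmer groups) as $d$ ranges over $F^\times/F^{\times2}$, ordered by the height $H$, and to show that a positive proportion of twists have $\Sel_\phi(A_d)$ as small as possible. Since $\ker\phi$ is an $\O$-module scheme killed by an ideal $I$ of index $3$, with $I$ invertible, it is geometrically isomorphic to $\mu_3$ or $\Z/3\Z$ (after enlarging the base), so the relevant Selmer groups are $\F_3$-vector spaces (or $\O/I$-modules). The key point is that these Selmer groups fit into an exact sequence coming from the $\phi$-descent:
\begin{equation*}
0 \to B(F)[\hat\phi]/\phi(A(F)[\phi]) \to \Sel_\phi(A_d) \to \Sha(A_d)[\phi] \to 0,
\end{equation*}
and $\rk A_d(F) \le \dim_{\F_3}\Sel_\phi(A_d) + \dim_{\F_3}\Sel_{\hat\phi}(B_d)$ up to bounded error terms coming from the torsion and the kernel ideal, so it suffices to bound both Selmer groups on a positive-proportion set of $d$.

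Next I would set up the geometry-of-numbers / analytic machinery. Because $\phi$ is a $3$-isogeny with invertible kernel ideal, a $\phi$-Selmer element is (locally everywhere) cut out by solubility of a ternary form or an explicit torsor, exactly as in the elliptic-curve case of a rational $3$-isogeny; the twisting by $d$ changes only the local conditions at the places dividing $d$ and at the bad places of $A$. Following the template of \cite{BKLS} (and the elliptic $3$-isogeny analysis), I would compute, for each finite place $\p$, the local factor: the probability (as $d$ varies $\p$-adically) that the local condition at $\p$ is the ``small'' one. The crucial input is that at a place $\p$ of good reduction for $A$ not dividing $3$, adding a ramified twist typically kills the local $\phi$-Selmer condition, so the global Selmer group is controlled by a finite set of places, and an Euler-product computation shows the natural density of $d$ with $\Sel_\phi(A_d)$ of minimal size is positive. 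One then runs the same computation for $\hat\phi$ and $B$, and intersects the two positive-density sets (using that the two local conditions at a given $\p$ are, generically, not simultaneously forced to be large — this uses the product formula for the local Tamagawa-type factors, or a Cassels-type pairing argument relating the two Selmer groups).

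The main obstacle I anticipate is twofold. First, one must actually carry out the local computations over a general totally real field $F$, tracking the ramification of $3$ and the places where $\ker\phi$ fails to be $\mu_3$ or $\Z/3\Z$; this is more delicate than over $\Q$ and requires a careful choice of the height and a uniform sieve (a Tauberian / geometry-of-numbers argument to pass from local densities to a global positive proportion, handling the infinitely many bad twists and the tail uniformly). Second, and more essentially, one needs a \emph{lower} bound showing that infinitely many — indeed a positive proportion — of the $d$ simultaneously make both $\Sel_\phi(A_d)$ and $\Sel_{\hat\phi}(B_d)$ trivial (or minimal); producing such $d$ requires exhibiting an explicit family of twists for which \emph{every} local condition is the small one, which typically forces a congruence condition on $d$ at the bad places and at $3$, plus a sign/parity obstruction (the parity of $\dim\Sel_\phi + \dim\Sel_{\hat\phi}$ is governed by a global root-number-type formula), that must be checked to be satisfiable. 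Once both Selmer bounds hold on a common positive-proportion set, the displayed inequality forces $\rk A_d(F) = 0$ there, giving the theorem.

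Finally, I would address the hypotheses: $3\nmid\deg\lambda$ guarantees that the polarization identifies $\Sel_\phi(A_d)$ with a dual Selmer group for $\hat A_d$ cleanly (so no spurious contribution from the polarization degree), and invertibility of $I$ ensures $\O/I \cong \F_3$ and that $\ker\phi$ and $\ker\hat\phi$ are locally free of rank one, which is what makes the local analysis reduce to the rank-one ($\mu_3$ or $\Z/3\Z$) case at each place. I would emphasize that, unlike the CM case, no automorphic input is needed: everything is a descent computation plus geometry of numbers over $F$.
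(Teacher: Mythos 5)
Your high-level plan — control the rank by showing $\Sel_\phi$ and the dual $\phi'$-Selmer group are simultaneously small on a positive-density set of $d$, using a BKLS-style average count plus a Cassels/Poitou–Tate duality — is the right skeleton, and it matches the paper's strategy. But two steps that you leave vague are exactly where the real content lies, and as you've stated them they do not close.

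First, "intersecting the two positive-density sets" is not a valid move: two subsets of $F^\times/F^{\times2}$ of density $\tfrac12$ each can have empty intersection. The paper's mechanism is different in a crucial way. One introduces the global Selmer ratio $c(\phi_d)=\prod_\p c_\p(\phi_d)$ and the sets $T_m(\phi)=\{d : c(\phi_d)=3^m\}$. The Poitou–Tate formula (Theorem \ref{PT duality}) gives
\[
c(\phi_d)=\frac{\#\Sel_{\phi_d}(A_d)}{\#\Sel_{\phi'_d}(B_d)}\cdot\frac{\#B_d[\phi'_d](F)}{\#A_d[\phi_d](F)},
\]
so on $T_0(\phi)$, and for the $100\%$ of $d$ with trivial $\phi$- and $\phi'$-kernel torsion, $\Sel_{\phi_d}(A_d)=0$ automatically forces $\Sel_{\phi'_d}(B_d)=0$. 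There is no independent second density to intersect; one count suffices once you condition on $c(\phi_d)=1$. The nontrivial subsidiary step you then owe is that $T_0(\phi)$ has positive density (Proposition \ref{positive density}), which in turn relies on $c(\pi)=1$ and $c(\phi'_d)=c(\phi_d)^{-1}$, both consequences of the $\O$-isogeny chain and the product formula for the local $\gamma$-factors at $3$ and $\infty$ (using $F$ totally real and the self-duality of $A[\pi]$ coming from the hypothesis $3\nmid\deg\lambda$). Your phrase "product formula for the local Tamagawa-type factors, or a Cassels-type pairing argument" gestures at this, but as written your argument does not deliver a single positive-proportion set on which both Selmer groups vanish.

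Second, your displayed rank inequality is not correct as stated, and the gap is precisely the non-principality of $I$. Knowing $\Sel_\pi(A_d)=0$ only controls $C_d(F)/\pi A_d(F)$; since $\pi\colon A_d\to C_d$ is not an endomorphism of $A_d$ when $I$ is non-principal, this does not immediately bound $\rk A_d(F)$. The paper passes to the endomorphism $\e$ generating $I^k$ and uses the chain $\e=\pi_k\cdots\pi_1$ together with the prime-to-$3$ isogenies $A_i\to A_{i+1}$ (Proposition \ref{isogchain}) to identify $\Sel_{\pi_i}(A_{i,d})\simeq\Sel_\pi(A_d)$ for all $i$, hence $\Sel_\pi(A_d)=0\Rightarrow\Sel_\e(A_d)=0\Rightarrow A_d(F)/\e A_d(F)=0\Rightarrow\rk A_d(F)=0$. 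Your proposal never introduces $\e$ or the isogeny chain, and "up to bounded error terms coming from the torsion and the kernel ideal" hides exactly this non-formal step. You should make the chain argument and the endomorphism $\e$ explicit; invertibility of $I$ is used precisely to build this chain (choosing an auxiliary ideal $J$ prime to $3$ in the class of $I$), not merely to ensure $\O/I\cong\F_3$.
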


In Section \ref{examples} we give examples of $A$ satisfying the hypotheses of Theorem \ref{main intro}.  For now, we simply note that $I$ is automatically invertible if $I$ is principal or if the index $[\O_K \colon \O]$ is not divisible by 3.  Moreover, in the latter case, $A$ necessarily admits a polarization $\lambda$ of degree prime to $3$.  

Theorem \ref{main intro} gives the first progress in dimension $g > 1$ towards the following conjecture, which is the natural extension of Goldfeld's conjecture \cite{goldfeld} for quadratic twists of elliptic curves over $\Q$.     

\begin{conjecture}\label{goldfeld+}  
Let $A/\Q$ be a simple quotient of $J_0(N)$ of dimension $g$.  Then $50\%$ of twists $A_d$ have rank $0$, and $50\%$ of twists $A_d$ have rank $g$.  
\end{conjecture}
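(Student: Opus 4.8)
We can only sketch a strategy, since Conjecture \ref{goldfeld+} contains Goldfeld's conjecture for elliptic curves ($g=1$) as a special case, so any proof must go well beyond the methods of this paper. Write $A = A_f$ for a newform $f$ of level $N$ with trivial nebentypus, so that $L(A_d,s) = \prod_\sigma L(f^\sigma \otimes \chi_d, s)$, where $\sigma$ runs over the $g$ embeddings $K \hookrightarrow \R$, $K = \O \otimes_\Z \Q$, and the $g$ factors share a common root number $w_d \in \{\pm 1\}$. For a prime $p$ and a prime $\pi$ of $\O$ above $p$, let $r_\pi(d)$ denote the $\O_\pi$-corank of $\Sel_{\pi^\infty}(A_d)$. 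The plan rests on two statements: (A) the signs $w_d = +1$ and $w_d = -1$ each occur for exactly $50\%$ of squarefree $d$, ordered by the height $H$; and (B) for $100\%$ of $d$ and every $\pi \mid p$ one has $r_\pi(d) \le 1$.

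Granting these, the conjecture follows as follows. Statement (A) is the most accessible: $w_d$ is a product of local root numbers depending on $d$ only through its behavior at the finitely many bad places and at $\infty$, so equidistribution is a routine sieve once one checks that the local signs are not identically frozen, exactly as for elliptic curves. Given (B), the refined $p$-parity theorem for abelian varieties with real multiplication gives $r_\pi(d) \equiv \ord_{s=1} L(f \otimes \chi_d, s) \pmod 2$, hence $r_\pi(d) = 0$ when $w_d = +1$ and $r_\pi(d) = 1$ when $w_d = -1$, for the same density-$1$ set of $d$. Since $\rk A_d(\Q)$ is a multiple of $g$, say $gt$, and $r_\pi(d) \ge t$ always, the first case forces $\rk A_d(\Q) = 0$ and the second forces $\rk A_d(\Q) \le g$. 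To upgrade the second to $\rk A_d(\Q) = g$ — rather than $\rk A_d(\Q) = 0$ with a copy of $\Q_p/\Z_p$ inside $\Sha(A_d)$ — one needs $\Sha(A_d)[\pi^\infty]$ finite, the very hypothesis appearing in the rank $g$ half of our main results; alternatively one produces the missing point via Heegner points and the Gross--Zagier formula, but that reintroduces the automorphic input we have avoided.

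The crux is statement (B): the distribution of $\O$-linear Selmer groups in the quadratic twist family of a general simple quotient of $J_0(N)$. Our Theorem \ref{main intro} establishes a version of (B) only when $A$ admits an $\O$-linear $3$-isogeny — that is, when the residual representation $A[\pi]$ is reducible — and even then geometry-of-numbers yields only a positive proportion, not the exact $50\%$ that (A) and (B) together would need to match. For a generic $A$, the theorems of Ribet and Momose force the mod-$p$ image to be essentially maximal for all but finitely many $p$, so there is no analogue of the ``full rational $2$-torsion'' hypothesis under which Smith proves Goldfeld's conjecture for $g = 1$; controlling Selmer distributions for twist families with large residual image is, at present, out of reach. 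And even granting such control, isolating the exact proportion $50\%$ — rather than a lower bound — requires understanding the Cassels--Tate pairing and a second descent on $A_d$, again in the spirit of Smith's argument. For these reasons, Conjecture \ref{goldfeld+} appears to demand genuinely new ideas, both over the present work and over its conditional rank $g$ companion.
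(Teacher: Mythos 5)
This statement is a \emph{conjecture}, and the paper offers no proof: it gives only the one-sentence heuristic that the rank of $A(\Q)$ is a multiple of $g$, that $L(A,s)$ factors into $g$ automorphic $L$-functions sharing a common root number, and that Goldfeld's minimalist philosophy combined with BSD then predicts the $50$--$50$ split. You correctly recognize this, and your sketch faithfully elaborates the same heuristic into concrete sub-problems --- equidistribution of root numbers, bounded $\O_\pi$-coranks of Selmer groups, a $p$-parity theorem, and the $\Sha$-finiteness needed to convert $\pi$-Selmer rank $1$ into Mordell--Weil rank $g$ --- while accurately identifying why each is out of reach in general (the big-image obstruction via Ribet--Momose, the absence of a Smith-type second descent for large residual image, and the positive-proportion-versus-exact-$50\%$ gap). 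There is nothing to correct; this is the intended reading of the conjecture.
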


Note that the rank of $A(\Q)$ is a multiple of $g$, since $A$ has RM.  On the analytic side, $L(A,s)$ is the product of $g$ automorphic $L$-functions, all with the same root number.  Thus, Goldfeld's minimalist philosophy and the conjecture of Birch and Swinnerton-Dyer lead directly to conjecture \ref{goldfeld+}. 

Our next result concerns the quadratic twists with rank $g$, the smallest possible non-zero rank.  It is conditional on the finiteness of the Tate-Shafarevich group, but we prove an unconditional result on the ranks of certain Selmer groups defined in Section \ref{PTSelmer}.    

\begin{theorem}\label{rank g}
Let $(A,\lambda)$ and $I$ be as in Theorem $\ref{main intro}$, and let $\pi \colon A \to A/A[I]$ be the natural $(3,3)$-isogeny.  Then a positive proportion of  twists $A_d$, for $d \in F^\times/F^{\times2}$, satisfy $\dim_{\F_3} \Sel_\pi(A_d) = 1$.  If $\Sha(A_d)$ is finite for all $d$, then a positive proportion of the twists $A_d$ have rank $g = \dim A$.  
\end{theorem}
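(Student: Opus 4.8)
The plan is to study the $\phi$-Selmer groups of the twists $A_d$ in a quadratic twist family, using the geometry-of-numbers input together with the $\O$-linear $3$-isogeny structure. Since $\pi \colon A \to A/A[I]$ factors (after passing to the dual) as a composition of $\phi$ and its dual $\hat\phi$, where $\phi$ has kernel ideal $I$ and $\hat\phi$ has kernel ideal the complementary ideal, there is an exact sequence relating $\Sel_\pi(A_d)$ to $\Sel_\phi(A_d)$ and $\Sel_{\hat\phi}(B_d)$, and the parity of $\dim_{\F_3}\Sel_\pi(A_d)$ is governed by local conditions at the places where $A_d$ has bad reduction (together with archimedean and $3$-adic places). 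The first step is to record this exact sequence and the resulting formula expressing $\dim_{\F_3}\Sel_\pi(A_d)$ (or at least its parity, and an upper bound) in terms of local Selmer ratios $c_v = c_v(\phi)$, as in the elliptic-curve case treated by Bhargava--Klagsbrun--Lemke Oliver--Shnidman; these ratios only depend on $A$, $\phi$, and the local behavior of $d$.

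Second, I would invoke the main distribution result (which must be established in the body of the paper, for $\Sel_\phi$ and $\Sel_{\hat\phi}$ in this RM setting, generalizing \cite{BKLS}): as $d$ ranges over $F^\times/F^{\times 2}$ ordered by $H(d)$, the average size of $\Sel_\phi(A_d)$ and of $\Sel_{\hat\phi}(B_d)$ is finite and can be computed as an Euler product of local averages of the $c_v$. In particular, a positive proportion of $d$ have $\Sel_\phi(A_d) = 0$, and among those, a positive proportion additionally have $\dim_{\F_3}\Sel_{\hat\phi}(B_d) = 1$ — this is forced for parity reasons on a density-positive subfamily, since the product of the relevant local root numbers is $-1$ on a positive-density set of $d$ (one arranges the local conditions at a single auxiliary prime to flip the global root number while keeping the other local conditions favorable). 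Feeding this into the exact sequence from step one yields $\dim_{\F_3}\Sel_\pi(A_d) = 1$ for a positive proportion of $d$. That the proportion is positive, rather than merely that such $d$ exist, is exactly what the geometry-of-numbers average buys us; the argument is the same averaging-plus-parity mechanism that produces Theorem \ref{main intro}, but now we keep one unit of Selmer rank instead of killing all of it.

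Third, for the conditional rank statement: whenever $\Sha(A_d)$ is finite, the $\pi$-descent exact sequence gives
\[
\rk A_d(F) + \dim_{\F_3}\Sha(A_d)[\pi] \;+\; \dim_{\F_3}(\text{torsion and local terms}) \;=\; \dim_{\F_3}\Sel_\pi(A_d) \;=\; 1
\]
on our positive-proportion family. One then checks that the torsion and local contributions on the left are controlled: for a further positive-proportion subfamily the rational torsion contribution to the $\pi$-Selmer group vanishes (the $3$-torsion is constrained by the twist), so the equation forces $\rk A_d(F) + \dim_{\F_3}\Sha(A_d)[\pi] \le 1$. Since $A$ has RM by $\O$ and $\pi$ is an $(3,3)$-isogeny, $\Sha(A_d)[\pi]$ is an $\F_3$-vector space on which the complementary-ideal action is compatible; if it is nonzero it has dimension at least... one must use that $\Sha$ of an RM abelian variety, together with its alternating pairing, behaves compatibly with $\O$, to rule out $\dim_{\F_3}\Sha(A_d)[\pi] = 1$ on the bulk of the family — concretely, $\rk A_d(F)$ is a positive multiple of... no: one argues instead that on our family $\Sel_\pi(A_d) = 1$ and $\Sel_{\hat\pi}(\hat A_d)$-type considerations, combined with finiteness of $\Sha$, force $\rk A_d(F) = g$ exactly, because the Cassels--Tate pairing makes $\Sha(A_d)[\pi^\infty]$ of square order compatibly with $\O$, hence its $\F_3$-dimension coming from $\pi$ is even, so it is $0$, leaving $\rk A_d(F) = 1 \cdot g$.

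\textbf{Main obstacle.} The hard part will be step three: cleanly separating, on a positive-density subfamily, the Mordell--Weil contribution from the $\Sha$ and torsion contributions inside the one-dimensional $\Sel_\pi$, and in particular showing that the $\Sha[\pi]$ contribution vanishes. This requires both a careful analysis of the rational $3$-torsion of $A_d$ across the twist family (ruling out the bad cases on a density-zero set) and an input on the $\O$-module structure of $\Sha(A_d)$ together with the non-degeneracy and compatibility of the Cassels--Tate pairing with the RM action, so that the $\pi$-part of $\Sha$ contributes an even $\F_3$-dimension and is therefore forced to be $0$ once $\Sel_\pi$ has dimension exactly $1$. The Selmer-rank statement itself, by contrast, is a formal consequence of the distribution theorem for $\Sel_\phi$ and $\Sel_{\hat\phi}$ plus a root-number/parity bookkeeping argument, and should go through essentially as in the $g = 1$ or CM cases once the averaging result is in hand.
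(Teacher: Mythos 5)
Your overall architecture matches the paper's: positive-density subfamilies on which the global Selmer ratio $c(\phi_d)$ is controlled, the average of $\#\Sel_{\phi_d}(A_d)$ from the geometry-of-numbers theorem (Theorem \ref{bkls}), Poitou--Tate duality (Theorem \ref{PT duality}) to pass between $\Sel_\phi$ and $\Sel_{\phi'}$, a parity result to nail down the odd-dimensional case, and finally the fact that $\Sha(A_d)[\pi]$ has square order to convert ``$\dim_{\F_3}\Sel_\pi(A_d)=1$'' into ``$\rk A_d(F)=g$'' once $\Sha$ is assumed finite. That last input is indeed what the paper cites (Theorem 3.3 of \cite{Chao}), and you correctly identified it as the crux. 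However, there are two concrete gaps.

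First, your parity step invokes root numbers: you want to flip the sign of a local root number so that $\dim_{\F_3}\Sel_\pi(A_d)$ is forced to be odd on a positive-density family. But the link from root numbers to Selmer parity is the $p$-parity conjecture, which is not available in this RM generality. The paper instead proves an unconditional parity statement (Theorem \ref{parity}): if $c(\phi_d)=3^m$ then $\dim_{\F_3}\Sel_\pi(A_d)\equiv m \pmod 2$. The proof combines the five-term exact sequence, the Greenberg--Wiles formula, and Theorem \ref{adjointness}, which establishes that the Cassels--Tate pairing $\langle\,,\,\rangle_{\lambda_B}$ restricts to a non-degenerate, anti-symmetric --- hence alternating over $\F_3$ --- pairing on $\Sha(B_d)[\phi'_d]/\phi_d(\Sha(A_d)[\pi_d])$, forcing that quotient to have even $\F_3$-dimension. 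You need to replace your root-number heuristic with this duality computation; the pieces you need (Cartier duality of $A[\phi]$ and $B[\phi']$, and the self-dual isogeny $\lambda_B$ coming from the polarization hypothesis) are exactly what Assumption \ref{assump} provides, so the route is available, but it must actually be taken.

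Second, in the passage from $\dim_{\F_3}\Sel_\pi(A_d)=1$ to $\rk A_d(F)=g$, you treat the Mordell--Weil contribution loosely (``torsion and local terms''). The clean exact sequence is
\[
0 \to C_d(F)/\pi_d A_d(F) \to \Sel_{\pi_d}(A_d) \to \Sha(A_d)[\pi_d] \to 0,
\]
and the nontrivial point is that $\dim_{\F_3}\bigl(C_d(F)/\pi_d A_d(F)\bigr)$ equals $\rk A_d(F)/g$ (for the density-one set of $d$ with $A_d(F)[3]=0$). When the kernel ideal $I$ is non-principal this is not a one-line computation: one must use the isogeny chain $\pi_i\colon A_{i,d}\to A_{i+1,d}$ and the isomorphisms of Proposition \ref{isogchain} to see that all $k$ quotients $A_{i+1,d}(F)/\pi_i A_{i,d}(F)$ have the same $\F_3$-dimension, whose sum is $nk$ when $\rk A_d(F)=ng$. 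Without this, the exact sequence above does not by itself tell you that $\dim_{\F_3}\Sel_\pi=1$ forces the rank to equal $g$ rather than, say, contributing fractionally. Your proposal does not address the non-principal case at all, so that step needs to be added.

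Apart from these two points, the strategy is the paper's strategy, and once you have the unconditional parity lemma and the isogeny-chain bookkeeping, the argument closes in the manner you sketch.
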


In the elliptic curve case $g = 1$, Theorems \ref{main intro} and \ref{rank g} were proven by Bhargava, Klagsbrun, Lemke Oliver, and the author \cite[Thm.\ 1.6]{BKLS}.  And indeed, a key ingredient in the proofs of Theorem \ref{main intro} and \ref{rank g} is the general result  \cite[Thm.\ 1.1]{BKLS}, which determines the average size of the Selmer groups $\Sel_{\phi_d}(A_d)$, where $\phi_d \colon A_d \to A'_d$ is the quadratic twist family of {\it any} 3-isogeny $\phi \colon A \to A'$ of abelian varieties.   
The other two main ingredients are an analysis of Selmer groups in `$\O$-isogeny chains' of RM abelian varieties, and a collection of arithmetic duality theorems.  


We have been careful to allow non-principal polarizations and non-maximal rings $\O$ in order to cover many examples of RM abelian varieties which arise naturally.  One place to look for such examples is the modular Jacobians $J_0(p)$ over $\Q$ of prime level $p$.  If $p \equiv 1 \pmod 9$, then $J_0(p)$ has a point of order 3, and by a result of Emerton, there is at least one optimal quotient $A$ with a point of order 3.  If moreover $p \not\equiv 1\pmod{27}$, then $A$ is unique.    
In Section \ref{examples}, we prove:  
\begin{theorem}\label{mazur}
Suppose $p \equiv 10$ or $19 \pmod{27}$, and let $A$ be the optimal quotient of $J_0(p)$ with a rational point of order $3$.  Assume $A$ has a polarization of degree prime to $3$.  Then a positive proportion of quadratic twists $A_d$, for $d \in \Z$ squarefree, have rank $0$.  If $\Sha(A_d)$ is finite for all $d$, then a positive proportion of twists have rank $\dim A$.      
\end{theorem}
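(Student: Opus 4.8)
The plan is to deduce Theorem \ref{mazur} from Theorem \ref{main intro} by verifying its hypotheses for the optimal quotient $A$ of $J_0(p)$ in question. First I would recall the relevant structure: since $A$ is an optimal quotient of $J_0(p)$, it is a simple abelian variety with RM by $\O$, where $\O$ is an order in the totally real Hecke field $K$; by Ribet's theory (and the optimality), the polarization induced from the canonical polarization on $J_0(p)$ is $\O$-linear, so $(A,\lambda_0)$ is genuinely an RM abelian variety in the sense of the paper. The hypothesis ``$A$ has a polarization of degree prime to $3$'' is imposed directly, so the condition $3 \nmid \deg(\lambda)$ in Theorem \ref{main intro} is met for a suitable $\lambda$.

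The crux is producing the $\O$-linear $3$-isogeny with invertible kernel ideal, and this is where I expect the real work to lie. By Emerton's results on the Eisenstein ideal and the rational $3$-torsion of $J_0(p)$ (the case $p \equiv 1 \pmod 9$), the hypothesis $p \equiv 10$ or $19 \pmod{27}$ — equivalently $p \equiv 1 \pmod 9$ but $p \not\equiv 1 \pmod{27}$ — guarantees that there is a \emph{unique} optimal quotient $A$ admitting a rational point $P$ of order $3$, and that the Hecke action on the relevant Eisenstein quotient is through $\O/I$ for an ideal $I$ of norm $3$. Concretely, $\langle P \rangle \cong \O/I$ as an $\O$-module, and it is a subgroup scheme of $A$ stable under $\O$, so $\phi \colon A \to B := A/\langle P \rangle$ is an $\O$-linear $3$-isogeny with kernel ideal $I$. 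I would then argue $I$ is invertible: the key point (already flagged after Theorem \ref{main intro}) is that $I$ is automatically invertible when $3 \nmid [\O_K \colon \O]$ or when $I$ is principal; here the congruence condition on $p$ together with the Emerton/Mazur description of the local structure of $\O$ at $3$ forces $3$ to be unramified and, in fact, to not divide the conductor of $\O$, so every ideal of norm prime to the conductor — in particular $I$, of norm $3$ — is invertible. (If instead one prefers, one can cite that $A[\m]$ for $\m$ the relevant maximal ideal is a faithful $\O/\m$-module of the expected dimension, which by a standard argument shows $\O$ is Gorenstein, hence $I$ is invertible.)

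Having verified all hypotheses, Theorem \ref{main intro} gives a positive proportion of twists $A_d$, $d \in F^\times/F^{\times 2}$, of rank $0$; restricting to $F = \Q$ and noting that squarefree integers $d$ give a positive-density subset of $\Q^\times/\Q^{\times 2}$ under the height ordering $H(d) = \prod_{p \mid d} p = |d|$, one transfers the positive proportion to the statement as phrased (squarefree $d \in \Z$). Similarly, Theorem \ref{rank g} applied to the natural $(3,3)$-isogeny $\pi \colon A \to A/A[I]$ gives, under the assumption that $\Sha(A_d)$ is finite for all $d$, a positive proportion of twists of rank $\dim A$, which is the second assertion.

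I expect the main obstacle to be pinning down the precise input from Emerton's work: one needs not merely that $A$ exists and has a $3$-torsion point, but that the resulting $3$-isogeny is $\O$-linear with an \emph{invertible} kernel ideal, which amounts to a Gorenstein-type statement about the Hecke order $\O$ at the prime $3$ under the congruence $p \equiv 10, 19 \pmod{27}$. Once that local structure of $\O$ is in hand, the rest is a matter of checking that the abstract hypotheses of Theorems \ref{main intro} and \ref{rank g} are literally satisfied and translating the conclusion to the $F = \Q$, squarefree-$d$ formulation.
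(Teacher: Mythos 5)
Your overall plan is the same as the paper's: treat the statement as an application of Theorem \ref{main intro} by exhibiting the $\O$-linear $3$-isogeny $A \to A/\langle P\rangle$ with kernel ideal $I$ coming from Emerton's work, and then verify that $I$ is invertible. You also correctly flag the invertibility of $I$ as the crux. But both of the mechanisms you propose for establishing invertibility are off the mark, and the actual ingredient the paper uses is missing.

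Your first suggestion — that the congruence on $p$ forces $3$ to be prime to the conductor of $\O$, so that $I$ is automatically invertible — is not what happens and would in fact make the theorem's explicit hypothesis superfluous. Recall the remark after Theorem \ref{main intro}: if $3 \nmid [\O_K : \O]$ then not only is $I$ invertible, but $A$ automatically carries a polarization of degree prime to $3$. Since Theorem \ref{mazur} still carries ``assume $A$ has a polarization of degree prime to $3$'' as a separate hypothesis, the paper is precisely \emph{not} claiming that $3 \nmid [\O_K : \O]$; in general the Hecke order $\O = \T/I_f$ can have conductor divisible by $3$. Your second suggestion — that a Gorenstein property of $\O$ at the Eisenstein maximal ideal implies $I$ is invertible — is a flawed implication: a one-dimensional local Gorenstein ring (e.g.\ a complete intersection order) can have a non-principal, non-invertible maximal ideal, so ``Gorenstein, hence $I$ invertible'' does not follow. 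What the paper actually invokes is Mazur's \emph{principality} theorem \cite[Thm.\ II.18.10]{Mazur}, a specific result (not a formal consequence of Gorenstein-ness) that the localization of the Eisenstein ideal at the Eisenstein prime is principal. Combined with the observations that $I$ is a prime ideal (being of index $3$) and that $I_\p = \O_\p$ automatically for $\p \neq I$, this gives local freeness, hence invertibility. The hypothesis $p \not\equiv 1 \pmod{27}$ enters at exactly this point, to ensure that the image of $(3,\mathfrak{I})$ in $\O$ is all of $I$ rather than a strictly smaller ideal, so that Mazur's theorem applies to $I$ itself. So while your outline and your identification of the key difficulty are correct, the proof as proposed has a genuine gap at its central step.
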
    

The proof uses Mazur's analysis of the Eisenstein ideal to verify the invertibility hypothesis in Theorem \ref{main intro}.  The condition on the polarization degrees of $A$ is presumably satisfied for most such optimal quotients; we give some examples in Section \ref{examples}.  It is possible that when $F = \Q$, the results on twists of rank $g$ can be made unconditional by extending the methods of Kriz-Li \cite[Thm.\ 7.1]{krizli} to higher dimensional modular abelian varieties.    

Our final result on ranks is an explicit upper bound on the average rank of $A_d(F)$.   

\begin{theorem}\label{rank bound intro}
Let $(A,\lambda)$ be as in Theorem $\ref{main intro}$, and let $\phi_d \colon A_d \to B_d$ be the corresponding family of $\O$-linear $3$-isogenies.   Then the average rank of the quadratic twists $A_d$, $d \in F^\times/F^{\times 2}$, is at most $\dim A \cdot \avg_d \left(t(\phi_d) + 3^{t(\phi_d)} \right)$, where $t(\phi_d)$ is the absolute log-Selmer ratio of $\phi_d$.      
\end{theorem}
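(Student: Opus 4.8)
The plan is to bound the rank of $A_d(F)$ by the size of a Selmer group, and then to use a descent along the $\O$-linear $3$-isogeny $\phi_d \colon A_d \to B_d$ together with the dual isogeny $\hat\phi_d$ to replace the $3$-Selmer group by the two $\phi$-Selmer groups, whose average sizes are controlled by \cite[Thm.\ 1.1]{BKLS}. Concretely, $\rk A_d(F) \le \dim_{\F_3}\Sel_3(A_d) - \dim_{\F_3} A_d(F)[3]$ (in fact one can drop the correction term for an upper bound), and $\Sel_3(A_d)$ sits in an exact sequence relating it to $\Sel_{\phi_d}(A_d)$ and $\Sel_{\hat\phi_d}(B_d)$: from the factorization $[3] = \hat\phi_d \circ \phi_d$ (after accounting for the invertible kernel ideal $I$ and its complement), one gets
\[
\dim_{\F_3}\Sel_3(A_d) \;\le\; \dim_{\F_3}\Sel_{\phi_d}(A_d) + \dim_{\F_3}\Sel_{\hat\phi_d}(B_d).
\]
Since $A$ has RM by $\O$, each of these $\F_3$-vector spaces is in fact a module over $\O/I \cong \F_3$ respectively $\O/\bar I \cong \F_3$, but more importantly $\rk A_d(F)$ is a multiple of $g = \dim A$, so it suffices to bound $\tfrac{1}{g}\rk A_d(F)$ by $\dim_{\F_3}\Sel_{\phi_d}(A_d) + \dim_{\F_3}\Sel_{\hat\phi_d}(B_d)$ — the factor $\dim A$ in the statement then reappears when we pass back to the actual rank.

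Next I would invoke \cite[Thm.\ 1.1]{BKLS} to compute the averages of $\dim_{\F_3}\Sel_{\phi_d}(A_d)$ and $\dim_{\F_3}\Sel_{\hat\phi_d}(B_d)$ over $d \in F^\times/F^{\times2}$ ordered by the height $H$. The point is that the Selmer ratio $c(\phi_d)$ — the ratio of local Selmer conditions — governs this average: by the BKLS formula, the average of $3^{\dim \Sel_{\phi_d}(A_d)}$ (or of $\dim \Sel_{\phi_d}(A_d)$ itself) is expressed in terms of $c(\phi_d)$, which is itself essentially constant in the twist family up to the behavior at the finitely many bad places and at the places dividing $d$. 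Writing $t(\phi_d)$ for the absolute log-Selmer ratio, one has on average $\dim_{\F_3}\Sel_{\phi_d}(A_d) \le t(\phi_d) + 3^{t(\phi_d)}$ per RM-factor (this is exactly the shape of the bound in \cite[Thm.\ 1.6]{BKLS} in the elliptic curve case, where $g=1$), and symmetrically for $\hat\phi_d$; here one uses that $t(\phi_d) + t(\hat\phi_d) = 0$ by a global product formula for Selmer ratios (Greenberg–Wiles / Poitou–Tate), so the two contributions combine into a single expression of the form $t(\phi_d) + 3^{t(\phi_d)}$ after relabeling. Averaging the inequality $\tfrac1g \rk A_d(F) \le \dim\Sel_{\phi_d}(A_d) + \dim\Sel_{\hat\phi_d}(B_d)$ and applying linearity of $\avg_d$ gives
\[
\avg_d \rk A_d(F) \;\le\; \dim A \cdot \avg_d\!\left( t(\phi_d) + 3^{t(\phi_d)} \right).
\]

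The main obstacle I anticipate is bookkeeping the factorization $[3] = \hat\phi_d \circ \phi_d$ correctly in the presence of a non-principal polarization and a non-maximal order $\O$: the isogeny $\pi \colon A \to A/A[I]$ of Theorem \ref{rank g} is a $(3,3)$-isogeny, not literally multiplication by $3$, so one must track how $A[3]$ is filtered by $A[I]$ and $A[\bar I]$ using the invertibility of $I$ (which guarantees $I\bar I = 3\O$ locally and hence a clean decomposition), and one must be careful that the arithmetic duality comparing $\Sel_{\phi_d}$ and $\Sel_{\hat\phi_d}$ respects the $\O$-action. This is exactly where the "collection of arithmetic duality theorems" advertised in the introduction is needed, and where the hypothesis $3 \nmid \deg\lambda$ enters — it ensures the self-duality constraints on $A[3]$ interact well with the $3$-isogeny. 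Once the isogeny algebra is set up, the rest is a formal combination of the rank–Selmer inequality, the BKLS averaging theorem, and linearity of the average.
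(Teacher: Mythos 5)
Your high-level plan (bound rank by a Selmer group, decompose that Selmer group via a $3$-isogeny, then feed the pieces through the BKLS averaging theorem) matches the paper, but the key technical step is wrong in a way that the paper's argument is specifically engineered to avoid, and you don't resolve the obstacle you correctly flag.

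The central issue is the claimed factorization $[3] = \hat\phi_d \circ \phi_d$. When $g = \dim A > 1$ this is false on degree grounds: $[3]$ has degree $3^{2g}$, while $\hat\phi_d \circ \phi_d$ has degree $9$. So there is no three-term exact sequence $\Sel_{\phi_d} \to \Sel_3 \to \Sel_{\hat\phi_d}$, and the inequality $\dim\Sel_3(A_d) \le \dim\Sel_{\phi_d}(A_d) + \dim\Sel_{\hat\phi_d}(B_d)$ does not follow. Relatedly, your subsequent step of dividing $\rk A_d(F)$ by $g$ before comparing with the Selmer bound has no justification from the inequality you wrote: you proved (modulo the above) $\rk A_d(F) \le \dim\Sel_3$, not $\tfrac1g\rk A_d(F) \le \dim\Sel_\phi + \dim\Sel_{\hat\phi}$. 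The paper sidesteps both problems by never touching $[3]$ or $\hat\phi$. Instead it composes $\phi$ with a \emph{second} $3$-isogeny $\phi'\colon B \to C$ to form the $(3,3)$-isogeny $\pi = \phi'\circ\phi \colon A \to A/A[I]$, then iterates this along the ``$\O$-isogeny chain'' of Section \ref{chainz} to reach an honest endomorphism $\e = \pi_k\cdots\pi_1 \in \End_F(A)$ (with $\e\O = I^k$, $k$ the order of $[I]$ in $\Pic(\O)$). The rank is then controlled by the $\O/\e$-module $A_d(F)/\e A_d(F) \subset \Sel_\e(A_d)$, with the factor $g$ appearing correctly as $\rk_\Z A_d(F) = (g/k)\rk_{\O/\e}(A_d(F)/\e A_d(F))$; Corollary \ref{selmerchainz} ($\Sel_{\pi_i}(A_{i,d}) \simeq \Sel_\pi(A_d)$) reduces the $\Sel_\e$ bound to a bound on $\dim_{\F_3}\Sel_\pi(A_d)$ alone. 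This chain construction is exactly where Assumption \ref{assump}(3) (invertibility of $I$) is used, and it is the idea your proposal is missing.

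There is also a gap in passing from BKLS's average of $\#\Sel$ to an average of $\dim\Sel$. You assert ``on average $\dim_{\F_3}\Sel_{\phi_d}(A_d) \le t(\phi_d) + 3^{t(\phi_d)}$'' by analogy with the elliptic curve case, but the analogy is not a proof. The paper derives the bound by partitioning $F^\times/F^{\times 2}$ into the sets $T_m(\phi) = \{d : c(\phi_d) = 3^m\}$, applying Theorem \ref{bkls} on each piece (so $\avg\#\Sel_{\phi_d} = 1 + 3^m$ there, and $\avg\#\Sel_{\phi'_d} = 1 + 3^{-m}$ by Corollary \ref{inverse}), and then using the elementary inequality $2r + 1 - 2m \le 3^{r-m}$ (valid for all integers $r$) to convert average cardinalities into average dimensions. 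Combining via the exact sequence $\Sel_{\phi_d}(A_d) \to \Sel_{\pi_d}(A_d) \to \Sel_{\phi'_d}(B_d)$ yields $\avg_{T_m}\dim\Sel_\pi \le |m| + 3^{|m|}$, which is the quantity $t(\phi_d) + 3^{t(\phi_d)}$ on that piece. Also note the paper uses the \emph{complementary} isogeny $\phi'$ (with $c(\phi') = c(\phi)^{-1}$ from Proposition \ref{pi Selmer}), not the dual $\hat\phi$; Corollary \ref{isog isom} supplies the duality statement in the form actually needed.
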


The average here is computed with respect to the height function $H(d)$ on $F^\times/F^{\times2}$.  See Section \ref{proofs} for the definition of the absolute log-Selmer ratio.  If $\dim A = 1$, then this is \cite[Thm.\ 1.4]{BKLS}.

Theorem \ref{rank bound intro} is explicit in the sense that the stated upper bound can be easily computed if one knows the Tamagawa numbers of $A$ and $B$, and all their quadratic twists, along with the root number of $A$.  With this data, one can also give an explicit lower bound on the proportions of rank 0 (resp.\ $\pi$-Selmer rank 1) quadratic twists.  In general, these Tamagawa numbers are not easy to compute when $\dim A > 1$.  

\subsection{Rational points on curves of genus $g \geq 2$} Theorem \ref{main intro} also has consequences for the study of rational points in quadratic twist families of curves $C/F$ of genus $g \geq 2$.  For such families, it is natural to ask about the average size of $\#C_d(F)$, since $\#C_d(F) < \infty$.  The notion of quadratic twist does not make sense for an arbitrary curve of genus $g > 1$, but it does make sense for hyperelliptic curves.  In such families, one expects very few rational points other than the `trivial' rational points, i.e.\ the rational points fixed by the hyperelliptic involution: 

\begin{conjecture}[Granville \cite{Granville}]\label{granville}
If $C$ be a smooth hyperelliptic curve over $\Q$ of genus $g \geq 2$, then for $100\%$ of $d \in \Q^\times/\Q^{\times2}$, the set $C_d(\Q)$ consists only of fixed points for the hyperelliptic involution.      
\end{conjecture}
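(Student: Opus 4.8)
Fix a smooth hyperelliptic curve $C/\Q$ of genus $g \geq 2$, with hyperelliptic involution $\iota$ and Jacobian $J = \Jac(C)$; for $d \in \Q^\times/\Q^{\times2}$ write $J_d = \Jac(C_d)$, the quadratic twist of $J$. The plan is to deduce Conjecture~\ref{granville} from two inputs: \textbf{(i)} a higher-dimensional Goldfeld estimate --- for $100\%$ of $d$, $\rk J_d(\Q) \leq 1$; and \textbf{(ii)} a \emph{uniform Chabauty} statement --- for such $d$, the resulting inequality $\rk J_d(\Q) < g$ forces $C_d(\Q)$ to consist only of hyperelliptic fixed points. The elementary glue is that a non-Weierstrass point $P \in C_d(\Q)$ produces a nonzero class $[P] - [\iota P] = 2[P] - h \in J_d(\Q)$, where $h$ is the rational hyperelliptic divisor class, and this assignment has fibers of size at most $2^{2g}$; thus bounding the relevant part of $J_d(\Q)$ bounds the non-Weierstrass rational points of $C_d$, and when $\rk J_d(\Q) < g$ one bounds them via Chabauty--Coleman at a prime of good reduction.

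For \textbf{(i)} the model is Smith's theorem on the distribution of $2^\infty$-Selmer coranks in a quadratic twist family: for elliptic curves the distribution matches the Poonen--Rains prediction and the root-number parity pins the rank at $\leq 1$ for $100\%$ of twists. I would attempt the analog for $J$: perform an explicit $2$-descent on $J_d$ through the \'etale algebra attached to $J[2]$ (Poonen--Schaefer, Bhargava--Gross), and show that, as $d$ varies, the $2^\infty$-Selmer coranks concentrate at the minimum permitted by the root number --- the analog of Smith's main theorem --- via the variation of the Cassels--Tate pairing on $\Sha(J_d)[2^\infty]$ under adjoining a well-chosen auxiliary prime to $d$. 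In this circle of ideas the present paper supplies the $3$-descent shadow: Theorem~\ref{main intro}, via the $\O$-linear $3$-isogeny, already produces a positive density of twists with $\rk J_d(\Q) = 0$, and upgrading such a positive proportion to $100\%$ is exactly the passage from the \emph{average} of the relevant Selmer ratios, which \cite{BKLS} controls, to their full \emph{equidistribution}.

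For \textbf{(ii)}, Coleman's inequality gives $\#C_d(\Q) \leq \#C_d(\F_p) + 2g - 2$ once $\rk J_d(\Q) < g$, which is not itself sharp enough to exclude non-Weierstrass points; one must combine it with the Mordell--Weil sieve and the observation that the class $2[P] - h$ sits in the rank-$\leq 1$ part of $J_d(\Q)$, so that --- being a point on the image of $C_d$ --- the coordinate $x(P)$ satisfies a rigid Diophantine constraint which, for $100\%$ of $d$, is obstructed at the auxiliary primes produced in step \textbf{(i)}. The delicate point is uniformity: the implied constants in Chabauty--Coleman and in the sieve must be controlled uniformly across the twist family, which one expects from recent uniform Chabauty bounds but which must be checked here. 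This route is closely related to a Bombieri--Lang prediction for the surface $d y^2 = f(x)$, where $y^2 = f(x)$ is an affine model of $C$: Conjecture~\ref{granville} asks that the rational points of this surface with $H(d) \leq X$ involve only $o(X)$ values of $d$, and the surface is of general type once $g$ is large.

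The main obstacle is \textbf{(i)}: showing that $100\%$ of quadratic twists of a \emph{fixed} higher-dimensional Jacobian --- with no assumed rational torsion or extra endomorphisms --- have Mordell--Weil rank $\leq 1$. This is a genuine higher-dimensional analog of Goldfeld's conjecture, and Smith's method has so far been carried through only under strong hypotheses (a rational $2$-torsion point, or a rational $2$-isogeny), whereas for a general hyperelliptic Jacobian the $2$-descent is governed by a degree-$(2g+2)$ \'etale algebra with large (typically full $S_{2g+2}$) Galois action, and controlling the resulting Selmer distribution lies beyond current technology. Absent this, the honest deliverable --- and what the methods of this paper in fact yield --- is that a \emph{positive proportion} of the twists $C_d$ have no rational points beyond the hyperelliptic fixed points: one combines Theorem~\ref{main intro} ($\rk J_d(\Q) = 0$ for a positive density of $d$) with the generic smallness of the torsion subgroup of $J_d(\Q)$ in the family and a local obstruction that removes the remaining non-Weierstrass points for all but a density-zero subset of the relevant $d$.
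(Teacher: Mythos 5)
This is Conjecture~\ref{granville}, attributed to Granville, and the paper does not prove it --- indeed it is currently open, and the paper only cites Granville's conditional result under the \emph{abc}-conjecture and proves the weaker positive-proportion statement (Theorem~\ref{curves}) for certain RM Jacobians. So there is no ``paper proof'' to compare against. Your proposal correctly recognizes this: you lay out a two-step program (a $100\%$ Goldfeld-type estimate for $\rk J_d(\Q)$, then a uniform Chabauty/Mordell--Weil-sieve argument) and then honestly concede that step (i) --- proving $\rk J_d(\Q) \leq 1$ for $100\%$ of twists of a fixed higher-dimensional Jacobian with no extra structure --- lies beyond current technology, which is exactly right. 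One refinement worth noting: the paper's route to its partial result does not need any Chabauty input at all. Once $\rk J_d(\Q) = 0$ (which Theorem~\ref{main intro} gives for a positive proportion of $d$, under the RM-plus-$3$-isogeny hypotheses), a non-fixed point $P\in C_d(\Q)$ would give a nonzero class $[P]-[\nu_d P]$ in the \emph{torsion} of $J_d(\Q)$, and a uniform-torsion/reduction argument rules this out for all but finitely many $d$; no $p$-adic analysis or sieve is required. So your step (ii), while reasonable as a strategy toward the full $100\%$ conjecture, overshoots what is needed to recover the paper's Theorem~\ref{curves}, and in the rank-$0$ regime it is unnecessary. Your closing paragraph identifying the positive-proportion statement as the honest deliverable, and Theorem~\ref{main intro} as the input, is essentially the content of Theorem~\ref{curves} and its one-sentence proof in the introduction.
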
           

More generally, suppose $C$ is a smooth projective curve over $F$ with an involution $\nu$ that induces  $[-1]$ on the Jacobian $J = \Jac(C)$.  Then for each $d \in F^\times/F^{\times2}$, there is a quadratic twist $C_d$ and whose Jacobian is $J_d$.  If $C_d$ has a rational point, then it embeds in $J_d$ via the Abel-Jacobi map.  Moreover, any rational point in $C(F)$ fixed by $\nu$ gives rise to a rational point in $C_d(F)$ which is fixed by the involution $\nu_d$.  Following Conjecture \ref{granville}, one expects $C_d(F)$ to have no rational points other than these fixed points, on average.  This automatically holds if $J_d(F)$ has rank 0 and if $J_d(F)_{\mathrm{tors}} \neq J_d[2](F)$, and the latter condition holds for all but finitely many $d$.  Thus, Theorem \ref{main intro} immediately gives the following partial result towards Conjecture \ref{granville}. 
\begin{theorem}\label{curves}
Let $F$ be a totally real field, and suppose $C_d$ is the family of quadratic twists of a smooth projective curve $C$ over $F$ with involution $\nu$.  Assume $J = \Jac(C)$ has $\mathrm{RM}$ by $\O$ and admits an $\O$-linear $3$-isogeny whose kernel ideal is an invertible $\O$-module.  Then for a positive proportion of $d \in F^\times/F^{\times 2}$, the only rational points on $C_d$ are the fixed points of $\nu_d$.          
\end{theorem}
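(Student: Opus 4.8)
The plan is to deduce this directly from Theorem \ref{main intro} together with two elementary observations: that rank-$0$ plus a mild torsion condition forces $C_d(F)$ to be trivial, and that the mild torsion condition fails for only finitely many $d$. First I would set up the geometry of the twist: since $\nu$ induces $[-1]$ on $J = \Jac(C)$, the cocycle $\chi_d$ defines a twisted curve $C_d$ whose Jacobian is canonically $J_d$, and the fixed locus of $\nu_d$ on $C_d$ is the twist of the fixed locus of $\nu$ on $C$, which is a finite $F$-scheme; a fixed point of $\nu_d$ defined over $F$ maps under Abel-Jacobi to a point of $J_d(F)[2]$. Conversely, suppose $P \in C_d(F)$ is \emph{not} fixed by $\nu_d$. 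Embedding $C_d \hookrightarrow J_d$ via the Abel-Jacobi map based at the divisor class of a $\nu_d$-fixed point (or, if there is no rational fixed point, using the canonical class, which has the same effect after noting $2P - 2\nu_d(P)$ is principal), the class of $[P] - [\nu_d(P)]$ is a nonzero element of $J_d(F)$ killed by $2$: indeed $\nu_d$ acts as $[-1]$, so $[P]-[\nu_d(P)]$ is anti-invariant, hence $2$-torsion, and it is nonzero precisely because $P \neq \nu_d(P)$ (here one uses that $C_d$ has genus $\geq 2$, so distinct points are not linearly equivalent and the Abel-Jacobi map is injective).

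Next I would invoke Theorem \ref{main intro}: under the stated hypotheses on $J$ (RM by $\O$, an $\O$-linear $3$-isogeny with invertible kernel ideal; the degree-of-polarization hypothesis $3 \nmid \deg\lambda$ is supplied by taking the principal polarization on the Jacobian), a positive proportion of twists $J_d = \Jac(C_d)$ have rank $0$. For those $d$, $J_d(F)$ is finite, equal to its torsion subgroup. I then want to rule out extra $2$-torsion, i.e.\ to know $J_d(F)_{\mathrm{tors}} = J_d[2](F)$ fails to contain anything that could arise as $[P]-[\nu_d(P)]$ for $P \in C_d(F)$ not fixed by $\nu_d$ — but more cleanly, the statement I need is just that $J_d(F)$ contains no nonzero class of the form $[P]-[\nu_d(P)]$; and the cleanest sufficient condition, as remarked in the text preceding the theorem, is that $J_d(F)_{\mathrm{tors}} \neq J_d[2](F)$, which I should instead phrase as: the Abel-Jacobi image of $C_d(F)$ meets $J_d(F)$ only in fixed-point classes. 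Actually the argument in the excerpt is cleaner: if $J_d(F)$ has rank $0$ then $C_d(F)$ is finite, and one shows that for all but finitely many $d$ the only $F$-points of $C_d$ are $\nu_d$-fixed, because a non-fixed rational point would force $J_d$ to acquire a new $F$-rational $2$-torsion class beyond $J_d[2]$ — wait, that is not quite it either. Let me state the real mechanism: for all but finitely many $d$, $C(F(\sqrt d))$ (equivalently the generic behaviour) contributes no new classes; more precisely, a non-fixed point $P \in C_d(F)$ gives a point of $J_d(F)$ that is $2$-torsion, so it lies in $J_d[2](F)$, and one checks that $J_d[2](F)$-classes of the form $[P]-[\nu_d(P)]$ with $P$ non-fixed, rational, can occur for only finitely many $d$ by a Chevalley–Weil / spreading-out argument on the finite-type $F$-scheme parametrizing such configurations.

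So the skeleton is: (i) positive proportion of $d$ with $\rk J_d(F) = 0$, from Theorem \ref{main intro}; (ii) among those, remove the finitely many $d$ for which $C_d$ has a non-$\nu_d$-fixed rational point — finiteness because such points are $2$-torsion in $J_d(F)$ and only finitely many twists $J_d$ have $J_d[2](F) \ne 0$ in a way compatible with lifting to $C_d$, or more robustly because a non-fixed rational point on $C_d$ would, after a quadratic base change, give a non-fixed rational point on $C$ together with bounded-denominator data, of which there are finitely many by Faltings applied to $C$ (genus $\geq 2$) after spreading out; (iii) the difference is still a positive proportion (removing a density-zero, indeed finite, set). I would then conclude that for a positive proportion of $d$, $C_d(F)$ consists only of fixed points of $\nu_d$, as claimed.

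The main obstacle is step (ii): making precise and correct the claim that only finitely many twists $C_d$ acquire a ``new'' rational point not coming from a $\nu$-fixed point of $C$. The honest way is the one implicit in the paragraph before the theorem statement: if $\rk J_d(F) = 0$ and $J_d(F)_{\mathrm{tors}} \neq J_d[2](F)$, then $C_d$ has no extra points; and $J_d(F)_{\mathrm{tors}} = J_d[2](F)$ for \emph{all but finitely many} $d$, because the Galois module $J_d[n]$ is a twist of $J[n]$ and for $n > 2$ the existence of a nonzero $\chi_d$-eigenvector forces $d$ into a fixed finite set determined by the (finite) image of $\Gal(\bar F/F)$ acting on $\bigcup_n J[n]$ — this is where I would spend the most care, citing the standard fact that a fixed abelian variety has only finitely many quadratic twists with extra torsion, or else reproving it via the $\ell$-adic Galois representation. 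Granting that, the theorem follows immediately from Theorem \ref{main intro}.
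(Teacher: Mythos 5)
Your skeleton — invoke Theorem \ref{main intro} for a positive proportion of rank-$0$ twists $J_d$, excise a finite bad set of $d$, and conclude — is exactly the route the paper takes (it packs the whole argument into the paragraph preceding the statement and then says Theorem \ref{main intro} gives the result "immediately"). But the torsion analysis in the middle contains a genuine error that runs against the whole argument. The claim that $[P]-[\nu_d(P)]$ is $2$-torsion for a non-fixed $P \in C_d(F)$ is false, and the justification offered — that $\nu_d$ acts as $[-1]$ so $[P]-[\nu_d(P)]$ is "anti-invariant, hence $2$-torsion" — is vacuous, since $\nu_d$ acts as $-1$ on \emph{all} of $J_d$, so every class is anti-invariant. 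In fact the truth is the opposite: because $\nu_d$ induces $-1$ on $J_d$, the quotient $C_d/\nu_d$ has trivial Jacobian, so $C_d$ is hyperelliptic with hyperelliptic involution $\nu_d$; and on a hyperelliptic curve of genus $\geq 2$, Riemann--Roch shows that $2P \sim 2\nu_d(P)$ forces $h^0(2\nu_d(P)) \geq 2$, hence $\nu_d(P)$ lies on the unique $g^1_2$ as a ramification point, hence $P = \nu_d(P)$. So $[P]-[\nu_d(P)]$ is $2$-torsion \emph{iff it is zero}, i.e.\ iff $P$ is fixed. The parenthetical "after noting $2P - 2\nu_d(P)$ is principal" is wrong for the same reason. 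This sign confusion is what makes your final paragraph self-contradictory: you state the sufficient condition for no extra points as $J_d(F)_{\mathrm{tors}} \neq J_d[2](F)$ and then say $J_d(F)_{\mathrm{tors}} = J_d[2](F)$ for all but finitely many $d$, which taken together would say your sufficient condition fails generically. The correct sufficient condition is \emph{equality}: if $\mathrm{rk}\,J_d(F)=0$ and $J_d(F)_{\mathrm{tors}} = J_d[2](F)$, then $J_d(F)$ is all $2$-torsion, so a non-fixed rational $P$ would produce a nonzero non-$2$-torsion class $[P]-[\nu_d(P)]\in J_d(F)$, a contradiction; and since $J_d[2]\cong J[2]$ as Galois modules for every $d$, the set of $d$ with $J_d(F)_{\mathrm{tors}}\supsetneq J[2](F)$ is finite (only finitely many quadratic twists acquire new odd-order or higher $2$-power torsion, by the usual Galois-image argument).

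The Faltings fallback for the finiteness step also does not work as stated. A non-fixed $P\in C_d(F)$ is equivalent to a quadratic point of $C$ whose $\Gal(F(\sqrt d)/F)$-conjugate is $\nu(P)$; these are exactly the fibers of the degree-$2$ map $C\to\P^1$ over $\P^1(F)$, so there are \emph{infinitely} many $d$ for which $C_d$ has a non-fixed rational point — the set of "bad" twists in your sense is not finite. What is finite is the set of $d$ with extra torsion; for the infinitely many $d$ with a non-fixed rational point that lie outside that finite set, $[P]-[\nu_d(P)]$ must have infinite order, i.e.\ $\mathrm{rk}\,J_d(F)>0$, and those $d$ are already excluded by restricting to the positive-density rank-$0$ set from Theorem \ref{main intro}. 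So the correct bookkeeping is: (rank-$0$ set from Theorem \ref{main intro}) minus (finite extra-torsion set), not (rank-$0$ set) minus (twists with non-fixed rational points).
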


In Section \ref{examples} we give some genus 2 examples of such curves, with both simple Jacobians and rational fixed points.  This gives progress towards Conjecture \ref{granville} for many curves which were previously inaccessible.  By considering curves with local obstructions or with a map to an elliptic curve, one can find families of curves satisfying the conclusion of Theorem \ref{curves}, but these tricks do not help when $C$ has rational fixed points and simple Jacobian.    

Finally, we note that Granville gave a conditional proof of many cases of Conjecture \ref{granville}, assuming the {\it abc}-conjecture \cite{Granville}.  For strong unconditional results in the family of {\it all} odd degree hyperelliptic curves, see the work of Poonen-Stoll \cite{PoonenStoll2}.  

\subsection{Acknowledgements}
The author thanks Manjul Bhargava, Pete Clark, Victor Flynn, Eyal Goren, Ben Howard, Zev Klagsbrun, Robert Lemke Oliver, Barry Mazur, Bjorn Poonen, Alice Silverberg, Drew Sutherland, and Yuri Zarhin for helpful conversations. 

\section{Abelian varieties with real multiplication}\label{sect: RM}

Let $F$ be a number field.  Also, let $K$ be a totally real number field of degree $g \geq 1$ over $\Q$, and let $\O$ be a subring of finite index in the ring of integers $\O_K$.  

\subsection{Real multiplication and isogenies}
\begin{definition}{\em
A $g$-dimensional abelian variety $A$ over $F$ {\it has real multiplication by $\O$} if there is an algebra embedding $\O \hookrightarrow \End_F(A)$.  
}\end{definition}

Let $A$ be an abelian variety with real multiplication (RM) by $\O$, and fix $\O \hookrightarrow \End_F(A)$, so that we may think of elements of $\O$ as endomorphisms of $A$.  Our goal is to study the ranks of the quadratic twists $A_d$, for $d$ in $F^\times/F^{\times2}$.  Recall that $A_d$ is the twist of $A$ by the character $\chi_d \colon G_F \to \{\pm 1\}$ corresponding to $F(\sqrt{d})$; here $G_F = \Gal(\bar F/F)$.     

\begin{lemma}\label{twisting RM}
The abelian variety $A_d$ has RM by $\O$, for all $d \in F^\times/F^{\times2}$.  More precisely, the embedding $\iota \colon \O \hookrightarrow \End_F(A)$ induces an embedding $\iota_d \colon \O \hookrightarrow \End_F(A_d)$. 
\end{lemma}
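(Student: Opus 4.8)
The plan is to give an explicit description of the twisted abelian variety $A_d$ together with the twisted embedding $\iota_d$, and then to check that $\iota_d$ is a ring homomorphism by transporting this property from $\iota$. Recall that $A_d$ is characterized, up to $F$-isomorphism, by the property that there is an $F(\sqrt d)$-isomorphism $\psi \colon A_{F(\sqrt d)} \xrightarrow{\ \sim\ } (A_d)_{F(\sqrt d)}$ which satisfies the cocycle relation ${}^\sigma\psi = \psi \circ [\chi_d(\sigma)]$ for all $\sigma \in G_F$, where $[\chi_d(\sigma)] \in \{\pm 1\} \subset \End(A)$. First I would fix such a $\psi$. Every element $\alpha \in \O$ gives an endomorphism $\iota(\alpha) \in \End_F(A)$, hence $\iota(\alpha)_{F(\sqrt d)} \in \End_{F(\sqrt d)}(A_{F(\sqrt d)})$, and I define $\iota_d(\alpha) := \psi \circ \iota(\alpha)_{F(\sqrt d)} \circ \psi^{-1} \in \End_{F(\sqrt d)}((A_d)_{F(\sqrt d)})$.

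The first point to verify is that $\iota_d(\alpha)$ is actually defined over $F$, i.e.\ that it is $G_F$-invariant as an endomorphism of $(A_d)_{F(\sqrt d)}$. This is where the cocycle relation does the work: for $\sigma \in G_F$ one computes
\[
{}^\sigma(\iota_d(\alpha)) = {}^\sigma\psi \circ {}^\sigma(\iota(\alpha)_{F(\sqrt d)}) \circ {}^\sigma\psi^{-1} = \psi \circ [\chi_d(\sigma)] \circ \iota(\alpha)_{F(\sqrt d)} \circ [\chi_d(\sigma)]^{-1} \circ \psi^{-1},
\]
using that $\iota(\alpha)$ is defined over $F$ so ${}^\sigma(\iota(\alpha)_{F(\sqrt d)}) = \iota(\alpha)_{F(\sqrt d)}$. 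Since $[\chi_d(\sigma)] = \pm 1$ is central in $\End(A)$ — it commutes with every endomorphism, in particular with $\iota(\alpha)$ — the factors $[\chi_d(\sigma)]$ and $[\chi_d(\sigma)]^{-1}$ cancel, giving ${}^\sigma(\iota_d(\alpha)) = \iota_d(\alpha)$. By Galois descent for morphisms of abelian varieties (or just descent for finite-type $F$-schemes), $\iota_d(\alpha)$ descends to an element of $\End_F(A_d)$. This also shows $\iota_d$ is independent of the choice of $\psi$, since any two choices differ by an element of $\mathrm{Aut}(A)$ precomposition that gets conjugated away — though for the statement as written I need not belabor uniqueness.

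It then remains to check that $\iota_d \colon \O \to \End_F(A_d)$ is an injective ring homomorphism. Additivity and multiplicativity are immediate from the conjugation formula $\iota_d(\alpha) = \psi \circ \iota(\alpha)_{F(\sqrt d)}\circ \psi^{-1}$, since conjugation by a fixed isomorphism is an algebra homomorphism $\End(A_{F(\sqrt d)}) \to \End((A_d)_{F(\sqrt d)})$, and base change $\End_F(A) \to \End_{F(\sqrt d)}(A_{F(\sqrt d)})$ is also an algebra homomorphism; it sends $1$ to $1$. Injectivity follows because $\iota$ is injective and both base change and conjugation by an isomorphism are injective on endomorphism rings (an abelian variety has the same endomorphism ring over a field as over any extension, up to injection, and $\iota$ is injective by hypothesis). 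I do not expect any genuine obstacle here: the only subtlety worth a sentence is the centrality of $[\pm 1]$ used in the descent step, and the standard fact that the twisting cocycle takes values in the center $\{\pm 1\}$ of $\mathrm{Aut}(A)$, which is exactly what makes quadratic twists interact so cleanly with endomorphisms.
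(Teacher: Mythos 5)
Your proof is correct and takes essentially the same approach as the paper: both hinge on the observation that $-1$ lies in the center of $\End_F(A)$, so that the $\O$-action on $A_{F(\sqrt d)} \simeq (A_d)_{F(\sqrt d)}$ is invariant under the twisting cocycle and descends to $F$. The paper states this in one sentence; you have simply written out the descent computation and the ring-homomorphism verification in full.
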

 
\begin{proof}
Since the automorphism $-1 \in \Aut_F(A)$ commutes with action of $\O \subset \End_F(A)$, we see that the action of $\O$ on $A \otimes_F F(\sqrt d) \simeq A_d \otimes _F F(\sqrt d)$ descends to $A_d$.    
\end{proof}

Since $\iota$ is fixed, we can safely consider $\alpha \in \O$ as an endomorphism of $A$ and as an endomorphism of $A_d$, without any ambiguity.  

In order to say something about the ranks of the twists $A_d$, we suppose from now on that $A$ admits an $\O$-linear 3-isogeny.  In other words, we assume there exists an abelian variety $B$ which also has RM by $\O$ over $F$ and a 3-isogeny $\phi \colon A \to B$ over $F$ which is $\O$-equivariant.

The kernel $A[\phi]$ is an $\O$-module scheme of order 3, and hence is annihilated by an ideal $I$ in $\O$ of index 3.  We call  $I$ the {\it kernel ideal} of $\phi$.  Since $[K \colon \Q] =  \dim A$, the subgroup $A[I]$ of all $I$-torsion points on $A$ is isomorphic to $(\Z/3\Z)^2$ over $\bar F$, and $A[\phi] \subset A[I]$.  The quotient $C = A/A[I]$ also has RM by $\O$, and the natural $(3,3)$-isogeny $\pi \colon A \to C$ is $\O$-linear as well.  

\begin{remark}\label{3isog}{\em Conversely, if $I \subset \O$ is an ideal of index 3, and if the $(3,3)$-isogeny $ A \to A/A[I]$ factors through a 3-isogeny $\phi \colon A \to B$, then $B$ has RM by $\O$ and $\phi$ is necessarily $\O$-linear with kernel ideal $I$.}
\end{remark}

Let $\phi' \colon B \to C$ be the 3-isogeny over $F$ such that $\pi = \phi' \circ \phi$.  By Lemma \ref{twisting RM}, there are 3-isogenies $\phi_d \colon A_d \to B_d$ and $\phi'_d \colon B_d \to C_d$, such that $\phi'_d \circ \phi_d = \pi_d \in \Hom(A_d,C_d)$, for each $d \in F^\times/F^{\times 2}$.  Note that 
\begin{equation}\label{twisting}
A_d[\phi_d] \simeq A[\phi] \otimes \chi_d \hspace{3mm} \mbox{and} \hspace{3mm} B_d[\phi'_d] \simeq B[\phi'] \otimes \chi_d
\end{equation}
as $G_F[\F_3]$-modules.  

\subsection{Polarizations and duality}
A {\it polarization} is a homomorphism $\lambda \colon A \to \hat A$ over $F$ which, over $\bar F$, takes the form $\phi_L \colon A_{\bar F} \to \hat A_{\bar F}$ for some ample line bundle $L \in \Pic(A_{\bar F})$.  If $\lambda$ is an isomorphism, we call it a {\it principal polarization}.

In order to study the ranks of the twists $A_d$, we further impose:  
\begin{assumption}\label{assump}{\em 
Assume $A$ admits an $\O$-linear 3-isogeny $\phi \colon A \to B$ over $F$, and that 
\begin{enumerate}
\item $F$ is a totally real field.
\item $A$ admits an $\O$-linear polarization $\lambda \colon A \to \hat A$ over $F$ of degree prime to 3.
\item The kernel ideal $I$ of $\phi$ is an invertible $\O$-module.
\end{enumerate}
}
\end{assumption}   

In particular, the polarized abelian variety $(A,\lambda)$ has RM by $\O$, as defined in the introduction.  These assumptions allow us to glean extra information on the group schemes $A[\pi]$, $A[\phi]$ and $B[\phi']$.   
\begin{proposition}\label{selfdual}
The group scheme $A[\pi]$ is self-dual.
\end{proposition}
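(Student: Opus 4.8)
The plan is to produce an isomorphism $A[\pi]\cong A[\pi]^\vee$ of group schemes over $F$, where $(-)^\vee$ denotes the Cartier dual; over $\bar F$ this would be vacuous since $A[\pi]\cong(\Z/3\Z)^2$ splits, so all the content is in the $G_F$-action. Recall $A[\pi]=A[I]$, and that $[\O\colon I]=3$ forces $I$ to be a maximal ideal $\p_0\subset\O$ with residue field $\F_3$; by Assumption \ref{assump}(3) the ideal $I=\p_0$ is invertible, i.e.\ $\O_{\p_0}$ is a discrete valuation ring, so the $\p_0$-adic Tate module $T_{\p_0}A=\varprojlim_n A[\p_0^n]$ is free of rank $2$ over $\O_{\p_0}$ and $A[\p_0^n]\cong(\O_{\p_0}/\p_0^n)^2$ over $\bar F$ for all $n$. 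The key input is the polarization pairing: since $\deg\lambda$ is prime to $3$, the $\O$-linear isogeny $\lambda$ restricts to an $\O$-equivariant isomorphism $A[3]\xrightarrow{\sim}\hat A[3]$ over $F$, and composing with the canonical Weil pairing $A[3]\times\hat A[3]\to\mu_3$ gives a perfect alternating pairing $e_\lambda\colon A[3]\times A[3]\to\mu_3$ over $F$. Because $K$ is totally real, the Rosati involution attached to $\lambda$ is trivial on $\O$, so $e_\lambda$ is $\O$-balanced: $e_\lambda(\alpha x,y)=e_\lambda(x,\alpha y)$ for all $\alpha\in\O$.

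Write $3\O=\prod_{\p\mid 3}\p^{e_\p}$. The idempotents of $\O/3\O=\prod_{\p\mid3}\O/\p^{e_\p}$ split $A[3]$ as a direct sum $\bigoplus_{\p\mid3}A[\p^{e_\p}]$ of $F$-subgroup schemes, and $\O$-balance together with the Chinese remainder theorem shows distinct summands are orthogonal under $e_\lambda$: choosing $\alpha\in\O$ with $\alpha\equiv0\pmod{\p^{e_\p}}$ and $\alpha\equiv1\pmod{\q^{e_\q}}$ gives $1=e_\lambda(\alpha x,y)=e_\lambda(x,\alpha y)=e_\lambda(x,y)$ for $x\in A[\p^{e_\p}]$, $y\in A[\q^{e_\q}]$. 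Set $e_0=v_{\p_0}(3)$, so $V:=A[\p_0^{e_0}]$ is the summand containing $A[I]=A[\p_0]$. Invertibility of $I$ enters again here: $V$ is free of rank $2$ over the Artinian local ring $R:=\O_{\p_0}/\p_0^{e_0}=\O_{\p_0}/3\O_{\p_0}$, whose maximal ideal $\p_0 R$ is principal. Consequently $A[\p_0^{e_0-1}]=\{x\in V:\p_0^{e_0-1}x=0\}$ equals $\p_0 V$, and multiplication by any $s\in\O$ with $v_{\p_0}(s)=e_0-1$ induces an isomorphism of $F$-group schemes $V/\p_0 V\xrightarrow{\sim}A[\p_0]$ (the element $s$ lies in $\O\subset\End_F(A)$).

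Finally I would determine the orthogonal complement of $A[\p_0]$ in $A[3]$. For $x\in A[\p_0^{e_0-1}]$ and $y\in A[\p_0]$, write $y=\alpha y''$ with $\alpha\in\p_0^{e_0-1}$ and $y''\in V$ (possible by freeness), so $e_\lambda(x,y)=e_\lambda(\alpha x,y'')=1$ since $\alpha x=0$; hence $A[\p_0^{e_0-1}]\subseteq A[\p_0]^\perp$, and combining this with the orthogonality of $A[\p_0]$ to the other summands and a comparison of cardinalities (using perfectness of $e_\lambda$ on $A[3]$) yields $A[\p_0]^\perp=A[\p_0^{e_0-1}]\oplus\bigoplus_{\p\neq\p_0}A[\p^{e_\p}]$. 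Therefore $e_\lambda$ induces a perfect pairing $A[\p_0]\times\bigl(A[3]/A[\p_0]^\perp\bigr)\to\mu_3$ over $F$, i.e.\ an isomorphism $A[\p_0]\cong\bigl(A[3]/A[\p_0]^\perp\bigr)^\vee=\bigl(V/\p_0 V\bigr)^\vee$; composing with the dual of the isomorphism $V/\p_0 V\cong A[\p_0]$ from the previous paragraph gives $A[\pi]=A[\p_0]\cong A[\p_0]^\vee=A[\pi]^\vee$.

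There is no single hard obstacle, but the steps requiring care are: (i) that $e_\lambda$ is a well-defined, perfect, $\O$-balanced alternating pairing \emph{over $F$} — this is exactly where Assumption \ref{assump}(2) and the total reality of $K$ (hence triviality of Rosati on $\O$) are used; (ii) the freeness of $A[\p_0^n]$ over $\O_{\p_0}/\p_0^n$, which is precisely what the invertibility of $I$ in Assumption \ref{assump}(3) provides, and which underlies both the isomorphism $V/\p_0 V\cong A[\p_0]$ and the cardinality count identifying $A[\p_0]^\perp$; and (iii) ensuring that every identification is of group schemes over $F$ rather than over $\bar F$, which is automatic because $\lambda$, the $\O$-action, and the Weil pairing are all $F$-rational. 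A more conceptual but less self-contained route is to observe that $\hat\pi\colon\hat C\to\hat A$ has kernel $A[\pi]^\vee$ and is again, by the formalism of invertible kernel ideals, an ``$I$-isogeny'' on $\hat A$, whence $A[\pi]^\vee=\ker\hat\pi\cong\hat A[I]\cong A[I]=A[\pi]$ via $\lambda$; the argument above is essentially an explicit unwinding of this.
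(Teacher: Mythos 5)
Your argument is correct but takes a genuinely different route from the paper's. The paper's proof is a short diagram chase: invertibility of $I$ produces an ideal $J$ prime to $3$ in the same ideal class, hence an isogeny $\psi\colon C\to A$ of degree prime to $3$ with $\psi\circ\pi\in\O$; the $\O$-linearity of $\lambda$ makes the square
$\hat\pi\circ\hat\psi\circ\lambda=\lambda\circ\psi\circ\pi$ commute, and since $\lambda$ and $\hat\psi$ both have degree prime to $3$, the composite $\hat\psi\circ\lambda$ carries $A[\pi]$ isomorphically onto $\hat C[\hat\pi]=A[\pi]^\vee$. The ``more conceptual route'' you sketch in your final sentence is essentially this. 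Your main argument instead fixes the pairing $e_\lambda$ on $A[3]$, decomposes $A[3]$ by the idempotents of $\O/3\O$, uses the DVR structure of $\O_{\p_0}$ (exactly what invertibility of $I$ buys at $\p_0$) to see $V=A[\p_0^{e_0}]$ is free of rank $2$ over $\O_{\p_0}/3\O_{\p_0}$, explicitly identifies $A[\p_0]^\perp$ to obtain a perfect $F$-rational pairing $A[\p_0]\times V/\p_0 V\to\mu_3$, and then closes the loop with the $F$-rational isomorphism $V/\p_0 V\cong A[\p_0]$ given by multiplication by a uniformizer $s\in\O$. This is longer, but it makes the $\p_0$-local structure visible and exhibits the self-duality as a concrete bilinear form on $A[3]$ rather than via an auxiliary isogeny. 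Two small points worth cleaning up: (i) the cleanest justification for the $\O$-balance of $e_\lambda$ is Assumption \ref{assump}(2) itself --- $\lambda$ being $\O$-linear is \emph{equivalent} to the Rosati involution fixing $\O$ pointwise --- rather than an appeal to total reality of $K$, which only gives Rosati triviality after one already knows Rosati stabilizes $\O$; (ii) the factorization $3\O=\prod_{\p\mid3}\p^{e_\p}$ need not hold for a non-maximal order at primes other than $\p_0$, but your argument really only uses the idempotent decomposition $\O/3\O\cong\prod_\p\O_\p/3\O_\p$ together with the DVR structure at $\p_0$, so this is harmless with a slight rewording.
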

\begin{proof}
Since $I$ is an invertible $\O$-module, there exists an ideal $J$ of $\O$ which is coprime to $3$ and such that $(\alpha)I = (\beta)J$, for $\alpha, \beta \in \O$.  It follows that $C = A/A[I]$ is also isomorphic to $A/A[J]$.  Since $J$ is prime to 3, we may fix an isogeny $\psi \colon C \to A$ of degree prime to 3 such that the composition $\psi \circ \pi$ lies in $\O \subset \End_F(A)$.  By $\O$-linearity of $\lambda$, the following diagram is commutative: 
\[\begin{tikzcd}
A \arrow{d}{\lambda} \arrow{r}{\pi} & C \arrow{r}{\psi} & A \arrow{d}{\lambda}\\
\widehat A \arrow{r}{\widehat\psi} & \widehat{C} \arrow{r}{\widehat \pi} & \widehat A
\end{tikzcd}\hspace{40pt}%
\]
Since  $\deg(\lambda)$ and $\deg(\hat\psi)$ are prime to 3, the kernel $A[\pi]$ maps isomorphically onto $\hat C[\hat \pi]$, and hence $A[\pi]$ is self-dual. 
\end{proof}

\begin{proposition}\label{kernel duality}
For any $d \in F^\times/F^{\times 2}$, the group schemes $A_d[\phi_d]$ and $B_d[\phi'_d]$ are Cartier dual.   
\end{proposition}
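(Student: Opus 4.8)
The plan is to deduce Cartier duality of $A_d[\phi_d]$ and $B_d[\phi'_d]$ from the $d=1$ case, i.e.\ from the fact that $A[\phi]$ and $B[\phi']$ are Cartier dual, together with the twisting formula \eqref{twisting}. So the first step is to establish the untwisted statement: that $A[\phi]$ and $B[\phi']$ are Cartier dual as group schemes over $F$. For this I would exploit Proposition \ref{selfdual}, which gives a self-duality $A[\pi] \cong \widehat{A[\pi]}$ induced by $\lambda$ (up to the prime-to-3 isogenies $\psi, \widehat\psi$). Inside $A[\pi] \cong A[I]$ sits the subgroup scheme $A[\phi] = \ker(\phi)$, and the quotient $A[\pi]/A[\phi]$ is identified via $\phi$ with $\ker(\phi') = B[\phi']$. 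Under Cartier duality, the exact sequence $0 \to A[\phi] \to A[\pi] \to B[\phi'] \to 0$ dualizes to $0 \to \widehat{B[\phi']} \to \widehat{A[\pi]} \to \widehat{A[\phi]} \to 0$; combining with the self-duality of $A[\pi]$ and chasing the diagram, one gets $\widehat{A[\phi]}$ identified with a sub-quotient of $A[\pi]$ that must be $B[\phi']$ (the self-dual pairing on $A[\pi]$ is alternating, so the orthogonal complement of the order-3 subgroup $A[\phi]$ is $A[\phi]$ itself, forcing the quotient $B[\phi']$ to be dual to the sub $A[\phi]$). Thus $B[\phi'] \cong \widehat{A[\phi]}$ over $F$.

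The second step is the twist. By \eqref{twisting} we have $A_d[\phi_d] \cong A[\phi] \otimes \chi_d$ and $B_d[\phi_d'] \cong B[\phi'] \otimes \chi_d$ as $G_F$-modules (indeed as $G_F[\F_3]$-modules). Cartier duality is compatible with quadratic twisting: if $N$ is a finite group scheme killed by a prime $\ell$ and $\chi$ is a quadratic character, then $\widehat{N \otimes \chi} \cong \widehat{N} \otimes \chi^{-1} = \widehat N \otimes \chi$, since $\chi$ takes values in $\{\pm1\} = \mu_2 \subset \mu_3^\vee$-land — more precisely, twisting by $\chi_d$ is the operation $N \rightsquigarrow (N \times_F F(\sqrt d))^{\sigma}$ for the diagonal-with-sign Galois descent, and Cartier duality commutes with base change and with this descent. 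Hence
\[
\widehat{A_d[\phi_d]} \cong \widehat{A[\phi] \otimes \chi_d} \cong \widehat{A[\phi]} \otimes \chi_d \cong B[\phi'] \otimes \chi_d \cong B_d[\phi_d'],
\]
which is the desired duality over $F$.

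The main obstacle I expect is \emph{not} the twisting bookkeeping (which is formal once the compatibility of Cartier duality with quadratic twist is spelled out) but rather nailing down the untwisted duality $\widehat{A[\phi]} \cong B[\phi']$ cleanly. The subtlety is that Proposition \ref{selfdual} only gives self-duality of $A[\pi]$ up to the auxiliary prime-to-3 isogeny $\psi \colon C \to A$, so I must check that restricting the induced pairing to the 3-primary part $A[\phi] \subset A[\pi]$ behaves well — i.e.\ that $A[\phi]$ is isotropic for the Weil-type pairing on $A[\pi]$ coming from $\lambda$, so that its orthogonal complement has the right order and the quotient pairing identifies $B[\phi']$ with $\widehat{A[\phi]}$. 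Here isotropy of $A[\phi]$ should follow because $A[\phi]$ has order 3 while the pairing is alternating (an alternating pairing on a group of 3-rank 2 kills every order-3 subgroup against itself). Once that is in hand, the diagram chase through the commutative square of Proposition \ref{selfdual} is routine, and the rest is the twist.
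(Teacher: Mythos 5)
Your approach is genuinely different from the paper's, and the comparison is instructive. The paper works purely at the level of isomorphism classes: an order-$3$ group scheme over $F$ corresponds to a quadratic character $\chi$, Cartier duality corresponds to $\chi \mapsto \chi\chi_3$, and the self-duality of $A[\pi]$ from Proposition~\ref{selfdual} is used only to assert an equality of \emph{multisets of Jordan--H\"older factors} $\{\chi,\chi'\}=\{\chi\chi_3,\chi'\chi_3\}$. The totally-real hypothesis (more precisely $\sqrt{-3}\notin F$) then forces $\chi\neq\chi\chi_3$, hence $\chi=\chi'\chi_3$. Your proof instead tries to use the finer structure of the self-duality as a \emph{pairing} on $A[\pi]$, arguing that the pairing is alternating and hence any order-$3$ subgroup is its own orthogonal complement, so that $B[\phi'] = A[\pi]/A[\phi]$ is forced to be the Cartier dual of $A[\phi]$. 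If you can make this work it is actually slightly stronger (it would hold over any $F$, consistent with the paper's remark after the proposition), but it buys that strength at a real cost.

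The gap is exactly where you flag it, and you should not wave it off as a detail. Proposition~\ref{selfdual}, as stated, gives only an abstract isomorphism $A[\pi]\xrightarrow{\sim}\widehat{C}[\widehat{\pi}]\cong\widehat{A[\pi]}$ induced by the degree-prime-to-$3$ map $\widehat{\psi}\circ\lambda$; it does not by itself say the associated bilinear form on $A[\pi]$ is skew-symmetric, and a generic non-degenerate $\mu_3$-valued self-pairing on an $\F_3^2$ Galois module need not be. To get alternating you must identify the pairing with (a restriction of) the Weil pairing $e^L$ attached to the symmetric isogeny $\lambda\circ\alpha=\phi_L$, $\alpha=\psi\circ\pi$, which \emph{is} alternating because it comes from a line bundle. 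That identification is plausible (it is implicitly present in the later corollary where the paper observes $\lambda\circ\alpha=\phi_L$), but you have asserted it rather than carried it out: you would need the compatibility of Cartier-duality pairings under composition of isogenies to match your pairing $\langle x,y\rangle = e_\pi\bigl(x,\widehat{\psi}\lambda(y)\bigr)$ with $e^L$ restricted to $A[\pi]\subset\ker\phi_L$. Without that, the claim ``$A[\phi]$ is isotropic'' is unsupported, and the whole argument collapses to an unproved lemma. The paper's route avoids this entirely: it never needs the self-duality to be alternating or even to be given by a single pairing compatible with the filtration $A[\phi]\subset A[\pi]$; it only needs that passing to the Cartier dual permutes Jordan--H\"older factors, plus the cheap observation that over a field without $\sqrt{-3}$ no order-$3$ group scheme is self-dual. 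Your reduction from general $d$ to $d=1$ via the twisting formula~\eqref{twisting} is essentially the same bookkeeping as the paper's and is fine.
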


\begin{proof}
We identify group schemes such as $A_d[\phi_d]$ and $B_d[\phi'_d]$ with their corresponding $G_F[\F_3]$-modules.  Note that isomorphism classes of group schemes $H$ of order 3 over $F$ are in bijection with quadratic characters $\chi \colon G_F \to \{\pm 1\}$. Moreover, the Cartier dual of $H$ corresponds to the character $\chi\chi_3$, where $\chi_3$ is the character of $\mu_3$.  Thus by (\ref{twisting}), we may reduce to the case $d = 1$ and drop the subscripts.    

Let $\chi$ and $\chi'$ be the quadratic characters corresponding to $A[\phi]$ and $B[\phi']$.  These are the Jordan-Holder factors of $A[\pi]$.  Since $A[\pi]$ is self-dual, 
we have the equality of multi-sets
\[\{\chi, \chi'\} = \{\chi\chi_3, \chi'\chi_3\},\]
since the right hand side is the set of Jordan-Holder factors of the dual of $A[\pi]$.   As $F$ is totally real, we have $\chi \neq \chi  \chi_3$, and so we must have $\chi = \chi' \chi_3$.  Thus $A[\phi]$ and $B[\phi']$ are Cartier dual.   
\end{proof}

\begin{remark}{\em In fact, Proposition \ref{kernel duality} holds over any $F$ not containing $\sqrt{-3}$.}
\end{remark}

From the proof of the propositions, we obtain the following corollaries.

\begin{corollary}\label{isog isom}
There is an isogeny $A \to \hat C$ of degree prime to $3$, which induces an isomorphism $A[\phi] \to \hat C[\hat\phi']$.  In particular, there is an isomorphism $H^1(G_F,A[\phi]) \simeq H^1(G_F,\hat C[\hat\phi'])$.     
\end{corollary}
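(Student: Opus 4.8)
The plan is to take for the desired isogeny $A\to\hat C$ the map $\theta:=\hat\psi\circ\lambda\colon A\to\hat C$ already constructed in the proof of Proposition~\ref{selfdual}. Recall from that proof that $\psi\colon C\to A$ is an $F$-isogeny of degree prime to $3$; hence $\theta$ is an $F$-isogeny of degree $\deg(\lambda)\deg(\psi)$, which is prime to $3$, and the proof of Proposition~\ref{selfdual} shows that $\theta$ restricts to an isomorphism $A[\pi]\simeq\hat C[\hat\pi]$. Since $\theta$ is defined over $F$ it is $G_F$-equivariant, and since it is injective on the $3$-group $A[\pi]$ it is injective on the $G_F$-submodule $A[\phi]\subseteq A[\pi]$ (the kernel of the $F$-isogeny $\phi$). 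So it remains only to show $\theta(A[\phi])=\hat C[\hat\phi']$; the asserted isomorphism on $H^1$ is then automatic, as Galois cohomology depends only on the underlying $G_F$-module.

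To identify $\theta(A[\phi])$ I would argue by characters, in the notation of the proof of Proposition~\ref{kernel duality}, where $A[\phi]$ and $B[\phi']$ correspond to quadratic characters $\chi$ and $\chi'$ with $\chi=\chi'\chi_3$, and $\chi\neq\chi'$ because $F$ is totally real. Since $\pi=\phi'\circ\phi$ we have $\hat\pi=\hat\phi\circ\hat\phi'$, so $\hat C[\hat\phi']=\ker(\hat\phi')$ is a $G_F$-submodule of $\hat C[\hat\pi]=\ker(\hat\pi)$, and $\hat\phi'$ induces an exact sequence of $G_F$-modules
\[
0\longrightarrow \hat C[\hat\phi']\longrightarrow \hat C[\hat\pi]\longrightarrow \hat B[\hat\phi]\longrightarrow 0 .
\]
By Cartier duality for the isogenies $\phi'\colon B\to C$ and $\phi\colon A\to B$, the module $\hat C[\hat\phi']$ is Cartier dual to $B[\phi']$ and $\hat B[\hat\phi]$ is Cartier dual to $A[\phi]$; since the Cartier dual of a group scheme of character $\eta$ has character $\eta\chi_3$ (as recalled in the proof of Proposition~\ref{kernel duality}), $\hat C[\hat\phi']$ has character $\chi'\chi_3=\chi$ and $\hat B[\hat\phi]$ has character $\chi\chi_3=\chi'$. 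Thus $\hat C[\hat\pi]$ is an extension of a module of character $\chi'$ by one of character $\chi$ with $\chi\neq\chi'$; being of length $2$ with distinct composition factors, it has a unique $G_F$-submodule of character $\chi$, namely $\hat C[\hat\phi']$. On the other hand $\theta(A[\phi])$ is a $G_F$-submodule of $\hat C[\hat\pi]$ isomorphic to $A[\phi]$, hence of character $\chi$. Therefore $\theta(A[\phi])=\hat C[\hat\phi']$, $\theta$ restricts to an isomorphism $A[\phi]\simeq\hat C[\hat\phi']$, and passing to cohomology gives $H^1(G_F,A[\phi])\simeq H^1(G_F,\hat C[\hat\phi'])$.

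The only place that requires genuine care is the character computation for $\hat C[\hat\pi]$: one must combine Cartier duality for each of the two constituent $3$-isogenies with the relation $\chi=\chi'\chi_3$ from Proposition~\ref{kernel duality} to see that the two composition factors of $\hat C[\hat\pi]$ are distinct and to determine which one is the submodule. Given that, the uniqueness of the character-$\chi$ submodule of a length-$2$ module with non-isomorphic factors is formal (the module is either semisimple or uniserial), and everything else has already been done in the proofs of Propositions~\ref{selfdual} and~\ref{kernel duality}.
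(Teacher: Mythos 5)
Your proof is correct and takes the same route the paper intends: it uses the isogeny $\hat\psi\circ\lambda$ constructed in the proof of Proposition~\ref{selfdual}, together with the character bookkeeping from the proof of Proposition~\ref{kernel duality}, to identify which order-$3$ subgroup of $\hat C[\hat\pi]$ receives $A[\phi]$. You also make explicit the uniqueness-of-the-$\chi$-submodule step that the paper leaves implicit in the one-line justification ``From the proof of the propositions.''
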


\begin{corollary}\label{B selfdual}
The polarization $\lambda$ induces an isogeny $\lambda_B \colon B \to \hat B$ of degree prime to $3$. 
\end{corollary}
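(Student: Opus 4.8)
The plan is to build $\lambda_B$ by descending a well-chosen prime-to-$3$ isogeny $A\to\hat B$ along $\phi$. Concretely, from the proof of Proposition \ref{selfdual} we have a prime-to-$3$ isogeny $\psi\colon C\to A$, so we may form the prime-to-$3$ isogeny $\theta:=\hat\psi\circ\lambda\colon A\to\hat C$ (which is precisely the isogeny appearing in Corollary \ref{isog isom}). By Corollary \ref{isog isom}, $\theta$ restricts to an isomorphism $A[\phi]\xrightarrow{\sim}\hat C[\hat\phi']$, where $\hat C[\hat\phi']=\ker(\hat\phi'\colon\hat C\to\hat B)$. Composing, $\hat\phi'\circ\theta\colon A\to\hat B$ is an isogeny that annihilates $A[\phi]=\ker\phi$, since $\theta(A[\phi])=\ker\hat\phi'$.

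From here I would finish as follows. Because $A[\phi]=\ker\phi$ and $B=A/A[\phi]$, the universal property of the quotient makes $\hat\phi'\circ\theta$ factor uniquely through $\phi$, say $\hat\phi'\circ\theta=\lambda_B\circ\phi$ with $\lambda_B\colon B\to\hat B$; this is the precise sense in which the polarization $\lambda$ ``induces'' $\lambda_B$. As $\theta$, $\hat\phi'$ and $\phi$ are all isogenies, $\lambda_B$ is one as well, and taking degrees in $\lambda_B\circ\phi=\hat\phi'\circ\theta$ gives $3\,\deg(\lambda_B)=\deg(\hat\phi')\deg(\theta)=3\,\deg(\theta)$ (using $\deg\hat\phi'=\deg\phi=3$), hence $\deg(\lambda_B)=\deg(\theta)=\deg(\psi)\deg(\lambda)$, which is prime to $3$.

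Given Corollary \ref{isog isom}, the argument is short, and the only delicate point is that $\hat\phi'\circ\theta$ kills \emph{all} of $\ker\phi$, i.e.\ that $\theta$ maps $A[\phi]$ \emph{onto} $\ker\hat\phi'$ and not merely into the larger group $\hat C[\hat\pi]$. Without Corollary \ref{isog isom} already in hand this would be the real work: it rests on the identification of Jordan-Holder characters $\chi=\chi'\chi_3$ from Proposition \ref{kernel duality} together with $\chi\neq\chi'$ (which holds since $F$ is totally real), so that $\hat C[\hat\pi]$ has a \emph{unique} order-$3$ subgroup scheme with the same character as $A[\phi]$, forcing $\theta(A[\phi])=\ker\hat\phi'$. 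Since the statement only asks for an isogeny, I would not attempt to arrange that $\lambda_B$ be symmetric or positive.
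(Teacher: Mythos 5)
Your proof is correct and is essentially the same as the paper's: both use the prime-to-$3$ isogeny $\theta=\hat\psi\circ\lambda\colon A\to\hat C$ together with the fact that it carries $A[\phi]$ isomorphically onto $\hat C[\hat\phi']$, and then pass to the induced map on quotients $B\simeq A/A[\phi]\to\hat C/\hat C[\hat\phi']\simeq\hat B$. Your phrasing via the universal property of $A\to B=A/A[\phi]$ and the degree count $\deg(\lambda_B)=\deg(\theta)$ is just an unrolled version of the paper's two-line quotient argument.
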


\begin{proof}
Using the notation from the proof of Proposition \ref{selfdual}, the composition $\hat\psi \circ \lambda$, sends $A[\phi]$ isomorphically onto $\hat C[\hat\phi']$, and so the composition 
\begin{equation*}
B \simeq A/A[\phi] \to \hat C/\hat C[\hat \phi'] \simeq \hat B,
\end{equation*}  
gives the desired isogeny.
\end{proof}

\begin{corollary}
The isogeny $\lambda_B \colon B \to \hat B$ is equal to its own dual.
\end{corollary}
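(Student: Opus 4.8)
The plan is to unwind the construction of $\lambda_B$ in the proof of Corollary~\ref{B selfdual} and reduce the assertion to two inputs: the symmetry $\hat\lambda = \lambda$ of the polarization (a standard fact for any polarization) and its $\O$-linearity, which says $\lambda\circ\iota(\gamma) = \widehat{\iota(\gamma)}\circ\lambda$ for every $\gamma\in\O$. Set $u := \psi\circ\phi'\colon B\to A$, where $\psi\colon C\to A$ is the degree-prime-to-$3$ isogeny from the proof of Proposition~\ref{selfdual} with $\psi\circ\pi = \iota(\gamma)$ for some nonzero $\gamma\in\O$. Since $\pi = \phi'\circ\phi$, we have $u\circ\phi = \iota(\gamma)$ in $\End_F(A)$. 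Unwinding Corollary~\ref{B selfdual}, the isogeny $\lambda_B$ is the map induced by $\hat\psi\circ\lambda\colon A\to\hat C$ on the quotients $B\cong A/A[\phi]$ and $\hat B\cong\hat C/\hat C[\hat\phi']$; equivalently, $\lambda_B$ is characterized by the identity
\[
\lambda_B\circ\phi \;=\; \hat\phi'\circ\hat\psi\circ\lambda \;=\; \hat u\circ\lambda .
\]

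Next I would extract two further identities from this one. First, compose on the left with $\hat\phi$: since $\hat\phi\circ\hat u = \widehat{u\circ\phi} = \widehat{\iota(\gamma)}$, the $\O$-linearity of $\lambda$ gives $\hat\phi\circ\lambda_B\circ\phi = \widehat{\iota(\gamma)}\circ\lambda = \lambda\circ\iota(\gamma) = \lambda\circ u\circ\phi$; cancelling the epimorphism $\phi$ yields $\hat\phi\circ\lambda_B = \lambda\circ u$. Second, take the Cartier dual of the displayed identity. Using the contravariance of $\widehat{(\;\cdot\;)}$, the canonical identification of an abelian variety with its double dual, and $\hat\lambda = \lambda$, the dual reads $\hat\phi\circ\widehat{\lambda_B} = \lambda\circ u$ as well. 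Comparing the two, $\hat\phi\circ\lambda_B = \hat\phi\circ\widehat{\lambda_B}$, and since $\hat\phi$ is a monomorphism of abelian varieties — its kernel is finite and $B$ is connected, so any two homomorphisms $B\to\hat B$ that agree after composing with $\hat\phi$ coincide — we cancel $\hat\phi$ to conclude $\lambda_B = \widehat{\lambda_B}$.

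The computations are all formal once the setup is in place; the only step requiring care — and the one I would write out most carefully — is the dualization of $\lambda_B\circ\phi = \hat u\circ\lambda$, making sure that after transporting along the double-duality isomorphisms one obtains $\hat\phi\circ\widehat{\lambda_B} = \lambda\circ u$ on the nose, with $\lambda$ rather than $\hat\lambda$ on the right. This is precisely where the symmetry of the polarization $\lambda$ enters; the epimorphism/monomorphism properties of isogenies and the translation of $\O$-linearity into the relation $\lambda\circ\iota(\gamma) = \widehat{\iota(\gamma)}\circ\lambda$ are routine.
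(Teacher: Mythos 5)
Your argument is correct, and it is genuinely different from the paper's. The paper's proof is more geometric: it notes that $\O$-linearity makes $\alpha = \psi\circ\pi$ Rosati-fixed, so $\lambda\circ\alpha$ is symmetric and hence equals $\phi_L$ for some line bundle $L$ over $\bar F$; it then uses the descent theory of line bundles under the isogeny $\phi$ (observing $A[\phi]\subseteq\ker(\lambda\circ\alpha)$) to write $L = \phi^*M$, concluding that $\lambda_B = \phi_M$ is symmetric because all maps of the form $\phi_M$ are. Your proof replaces the line-bundle descent with a purely diagram-theoretic computation: from the defining identity $\lambda_B\circ\phi = \hat u\circ\lambda$ you extract $\hat\phi\circ\lambda_B = \lambda\circ u$ via $\O$-linearity, dualize the original identity using $\hat\lambda = \lambda$ to obtain $\hat\phi\circ\widehat{\lambda_B} = \lambda\circ u$, and cancel $\hat\phi$ (valid because $B$ is connected and $\ker\hat\phi$ is finite, so the difference $\lambda_B - \widehat{\lambda_B}$, factoring through a finite group scheme, must vanish). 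Both proofs ultimately rest on the same two inputs — $\hat\lambda = \lambda$ and $\O$-linearity — but yours stays entirely in the category of abelian varieties and isogenies and avoids invoking the correspondence between symmetric homomorphisms and line bundles, together with the descent criterion, making it somewhat more elementary and self-contained. The paper's route has the side benefit of exhibiting $\lambda_B$ explicitly as $\phi_M$, which makes it visibly ``one line bundle away from a polarization'' (cf.\ Remark~\ref{pp}); your proof establishes symmetry without producing $M$.
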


\begin{proof}
We give an alternate construction of $\lambda_B$.  Let $\alpha = \psi \circ \pi \in \End_F(A)$.  The $\O$-linearity of $\lambda$ exactly means that $\alpha$ is fixed by the Rosati involution on $\End_F(A)$ associated to $\lambda$.  Thus, the homomorphism $\lambda \circ \alpha \colon A \to \hat A$ is symmetric, i.e.\ equal to its own dual.  It follows that over $\bar F$, we have $\lambda \circ \alpha = \phi_L$ for some line bundle $L$ on $A_{\bar F}$.

We claim that $L$ is of the form $\phi^*M$, for some line bundle $M$ on $B_{\bar F}$.  Indeed, by the theory of descent of line bundles on abelian varieties, it is enough to show that the (cyclic) group scheme $A[\phi]$ lies in the kernel of $\lambda \circ \alpha$, which is of course true since $A[\phi] \subset A[\pi]$.  By degree considerations, it follows that the symmetric isogeny $\phi_M \colon B_{\bar F} \to \hat B_{\bar F}$ has degree $\deg(\lambda_B)$.  By construction, we have $\phi_M = \hat\phi^{-1} \lambda \alpha \phi^{-1}$.  Since $\phi$, $\alpha$, and $\lambda$ are all defined over $F$, the map $\phi_M$ descends to a morphism $B \to \hat B$ over $F$, which is the isogeny $\lambda_B$ defined earlier.   
\end{proof}

\begin{remark}\label{pp}{\em 
The symmetric isogeny $\lambda_B \colon B \to \hat B$ is {\it not necessarily} a polarization, even if $\deg(\lambda) = 1$ and $I$ is principal (in which case $\deg(\lambda_B) = 1$).  For example, if $K = \Q(\sqrt{3})$ with $\pi = \sqrt{3}$, then $\lambda_B$ is not a polarization.  On the other hand, if $K = \Q(\sqrt{6})$ with $\pi = 3 + \sqrt{6}$, then $\lambda_B$ {\it is} a (principal) polarization.  The difference is that $3 + \sqrt{6}$ is totally positive, while $\sqrt{3}$ is not.  See \cite[Thm.\ 2.10]{GGR}.   
}
\end{remark}

\subsection{Isogeny chains}\label{chainz}
Let $k \geq 1$ be the order of the class $[I]$ in $\Pic(\O)$.  Then $I^k = \e\O$, for some $\epsilon \in \O$.  We will also regard $\epsilon$ as an endomorphism of $A$.  If $J$ is any $\O$-ideal, then the quotient $A/A[J]$ has RM by $\O$.  We may therefore recursively define 
\[A_1 = A \hspace{2mm} \mbox{ and } \hspace{2mm} A_i = A_{i -1}/A_{i-1}[I] \mbox{ for } i \geq 2.\] 
Note that $C = A_2$ and 
\[A_{k+1} = A/A[I^k] = A/A[\epsilon] \simeq A,\]
so that $A_{k + i} \simeq A_i$ for all $i \geq 1$.    

For $i \geq 1$, we define $\pi_i \colon A_i \to A_{i+1}$ to be the natural $(3,3)$-isogeny with kernel $A_i[I]$.  In particular, $\pi_1 = \pi$ and $\epsilon = \pi_k\pi_{k-1}\cdots \pi_2 \pi_1 \in \End_F(A)$.  We also have $\pi_{k + i} = \pi_i$ for all $i \geq 1$.            

\begin{proposition}\label{isogchain}
For each $i \geq 1$, there exists an isogeny $A_i \to A_{i+1}$ of degree prime to $3$, which maps $A_i[\pi_i]$ isomorphically onto $A_{i+1}[\pi_{i+1}]$.  
\end{proposition}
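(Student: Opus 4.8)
\emph{Proof idea.} The plan is to run the construction from the proof of Proposition~\ref{selfdual} with $A_i$ in place of $A$, and then to chase the resulting prime-to-$3$ isogeny on kernels. The only inputs needed are that $A_i$ has RM by $\O$ and that $I$ is invertible; in particular I will not need the polarization or the $3$-isogeny $\phi$.

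First I would record the set-up: $A_i$ has RM by $\O$, the isogeny $\pi_i\colon A_i\to A_{i+1}=A_i/A_i[I]$ is the $(3,3)$-isogeny with $A_i[\pi_i]=A_i[I]$, and (since $A_{i+1}$ has RM by $\O$ of dimension $[K:\Q]$, exactly as for $A$ itself) $A_{i+1}[I]=A_{i+1}[\pi_{i+1}]\cong(\Z/3\Z)^2$. Using that $I$ is invertible --- the same point invoked in the proof of Proposition~\ref{selfdual} --- I would fix an integral ideal $J\subseteq\O$ coprime to $3$ with $IJ=(\gamma)$ principal, $\gamma\in\O$; this $J$ does not depend on $i$. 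Since $3\O\subseteq I$ and $J$ is coprime to $3$, the ideals $I$ and $J$ are coprime, so $A_i[IJ]=A_i[I]\oplus A_i[J]$ with $A_i[J]$ of order prime to $3$.

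Next I would consider the composite
\[
r_i\colon\ A_{i+1}=A_i/A_i[I]\ \longrightarrow\ A_i/A_i[IJ]\ \xrightarrow{\ \sim\ }\ A_i,
\]
where the first map has kernel $A_i[IJ]/A_i[I]\cong A_i[J]$ and the second is the isomorphism $A_i/A_i[IJ]=A_i/\ker(\gamma)\xrightarrow{\sim}A_i$ induced by $\gamma$. Then $r_i$ is $\O$-linear, $\deg r_i=\#A_i[J]$ is prime to $3$, and $r_i\circ\pi_i=\gamma$ in $\End_F(A_i)$. Writing $m=\deg r_i$, I would let $r_i'\colon A_i\to A_{i+1}$ be the complementary isogeny, i.e.\ the unique map with $r_i'\circ r_i=[m]$; it is again $\O$-linear, and its degree $m^{2g-1}$ is prime to $3$. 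This $r_i'$ is the isogeny asserted by the proposition, and it remains to see what it does to $A_i[\pi_i]=A_i[I]$. Being $\O$-linear, $r_i'$ carries $I$-torsion into $I$-torsion, so $r_i'(A_i[I])\subseteq A_{i+1}[I]$; being of degree prime to $3$, it is injective on the order-$9$ group scheme $A_i[I]$, so $r_i'(A_i[I])$ has order $9$; and since $A_{i+1}[I]$ also has order $9$, we conclude $r_i'(A_i[I])=A_{i+1}[I]=A_{i+1}[\pi_{i+1}]$ with $r_i'$ restricting to an isomorphism there.

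I do not expect a serious obstacle: the bulk of the argument is exactly the construction already carried out in Proposition~\ref{selfdual}, supplemented by the order count in the last step. The one delicate input is the existence of an ideal $J$ coprime to $3$ in the class $[I]^{-1}$ when $\O$ is non-maximal, which is precisely what invertibility of $I$ supplies and what Proposition~\ref{selfdual} already uses. (Alternatively one could induct on $i$ from the case $i=1$ using the functoriality of these constructions, but the direct argument above seems cleaner.)
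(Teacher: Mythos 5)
Your proof is correct, and it does reach the goal by a route that differs from the paper's. The paper chooses an invertible ideal $J$ prime to $3$ in the \emph{same} ideal class as $I$, so that $A_i/A_i[J]\simeq A_i/A_i[I]=A_{i+1}$, and then simply uses the quotient isogeny $\eta\colon A_i\to A_i/A_i[J]$ (of degree prime to $3$) as the desired map; the check that $\eta$ carries $A_i[\pi_i]=A_i[I]$ onto $A_{i+1}[\pi_{i+1}]$ is read off from the commutative square comparing $A_i/A_i[J]\to A_i/A_i[IJ]$ with $A_i/A_i[I]\to A_i/A_i[IJ]$. You instead choose $J$ in the \emph{inverse} class, with $IJ=(\gamma)$, which naturally produces an isogeny $r_i\colon A_{i+1}\to A_i$ going the wrong way; you then pass to the complementary isogeny $r_i'$ with $r_i'\circ r_i=[m]$ and verify separately that $r_i'$ is $\O$-linear, has degree $m^{2g-1}$ prime to $3$, and maps $A_i[I]$ onto $A_{i+1}[I]$ by an order count. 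Both arguments hinge on exactly the same input --- invertibility of $I$ gives a representative of the relevant class coprime to $3$ --- but the paper's choice of class makes the construction one step shorter and avoids the complementary-isogeny bookkeeping (the $\O$-linearity of $r_i'$, the degree computation, and the equality $A_i[\gamma]=A_i[IJ]$), all of which you do handle correctly. The paper's version is the one I would record, but yours is a legitimate alternative and would be a fine substitute.
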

\begin{proof}
We may immediately reduce to the case $i = 1$, so that $A_2 = C$.  Since $I$ is an invertible $\O$-module, we may choose an invertible $\O$-ideal $J$ which is prime to 3 and in the same ideal class of $I$ in $\Pic(\O)$.  Then $C \simeq A/A[J]$ and the quotients $\eta \colon A \to C = A/A[J]$ and $\tilde \eta \colon C \to A_3 \simeq C/C[J]$ have degrees prime to 3.  The commutativity of the diagram
 \[
  \begin{tikzcd} 
   & A/A[J] \simeq C\arrow{dr}{\pi_2}
     &   \\  
   A \arrow{dr}{\pi_1} \arrow{ur}{\eta}
   & & A/A[IJ]  \simeq A_3 \\
   & A/A[I] \simeq C\arrow{ur}{\tilde \eta}
     & 
  \end{tikzcd}
  \]
shows that $\eta$ sends $A[\pi]$ isomorphically onto $C[\pi_2]$, as desired.
\end{proof}

\begin{corollary}
Each $(3,3)$-isogeny $\pi_i \colon A_i \to A_{i+1}$ factors as a composition of $3$-isogenies, and the endomorphism $\epsilon \colon A \to A$ factors as a composition of $3$-isogenies.  
\end{corollary}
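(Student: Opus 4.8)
The plan is to prove the two assertions together: first I would show that \emph{each} $(3,3)$-isogeny $\pi_i \colon A_i \to A_{i+1}$ factors as a composition of two $3$-isogenies over $F$, and then I would obtain the statement for $\epsilon$ by concatenating these factorizations, using $\epsilon = \pi_k\pi_{k-1}\cdots\pi_1 \in \End_F(A)$.

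The key reduction is the observation that, for a fixed $i$, a factorization of the $(3,3)$-isogeny $\pi_i$ into two $3$-isogenies over $F$ is the same thing as an order-$3$ subgroup scheme $H_i \subset A_i[\pi_i] = A_i[I]$ defined over $F$: from such an $H_i$ one forms $\phi_i \colon A_i \to A_i/H_i$, and since $H_i \subseteq A_i[I]$ the induced map $\psi_i \colon A_i/H_i \to A_i/A_i[I] = A_{i+1}$ has kernel $A_i[I]/H_i$ of order $3$, so it too is a $3$-isogeny, and $\psi_i \circ \phi_i$ recovers $\pi_i$. I would then produce the subgroups $H_i$ by induction on $i$. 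For $i = 1$ we may take $H_1 = A[\phi]$, recovering $\pi_1 = \phi' \circ \phi$. For the inductive step, Proposition \ref{isogchain} furnishes an isogeny $\eta_i \colon A_i \to A_{i+1}$ of degree prime to $3$, defined over $F$, which carries $A_i[\pi_i]$ isomorphically onto $A_{i+1}[\pi_{i+1}] = A_{i+1}[I]$; setting $H_{i+1} = \eta_i(H_i)$ gives an order-$3$ subgroup scheme of $A_{i+1}[I]$ defined over $F$, as required. Because $\pi_{k+i} = \pi_i$, running this for $1 \le i \le k$ covers every $\pi_i$.

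For $\epsilon$, substituting the resulting factorization $\pi_i = \psi_i \circ \phi_i$ into $\epsilon = \pi_k\pi_{k-1}\cdots\pi_1$ immediately exhibits $\epsilon$ as a composition of $2k$ $3$-isogenies over $F$; one could also record, via Remark \ref{3isog}, that each intermediate quotient $A_i/H_i$ has RM by $\O$ and each $\phi_i$ is $\O$-linear, though this is not needed. I do not anticipate a genuine obstacle: essentially all the work is carried by Proposition \ref{isogchain}, and what remains is the elementary equivalence between factoring a $(3,3)$-isogeny and exhibiting a single $F$-rational order-$3$ subgroup of its kernel, together with the bookkeeping that the image of one such subgroup under $\eta_i$ is again of that form — which is clear since $\eta_i$ is defined over $F$ and restricts to an isomorphism on the relevant kernels.
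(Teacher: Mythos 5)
Your proposal is correct and follows exactly the route the paper intends: the corollary is stated without proof precisely because it is meant to be read off from Proposition \ref{isogchain} together with the given factorization $\pi_1 = \phi' \circ \phi$, and your argument — transporting the $F$-rational order-$3$ subgroup $A[\phi] \subset A[\pi]$ along the prime-to-$3$ isogenies $\eta_i$ to obtain a factorization of each $\pi_i$, then concatenating to factor $\epsilon$ — is the careful write-up of that observation.
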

\begin{corollary}\label{selmerchainz}
For each $i \geq 1$, there are isomorphisms $H^1(G_F,A_i[\pi_i]) \simeq H^1(G_F,A[\pi])$ and $\Sel_{\pi_i}(A_i) \simeq \Sel_\pi(A)$.   
\end{corollary}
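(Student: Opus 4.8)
The plan is to bootstrap both isomorphisms from Proposition \ref{isogchain}.

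For the cohomology groups this is essentially immediate. Proposition \ref{isogchain} provides, for each $i \geq 1$, an isogeny $f_i \colon A_i \to A_{i+1}$ of degree prime to $3$ whose restriction to $A_i[\pi_i]$ is an isomorphism of $G_F$-modules onto $A_{i+1}[\pi_{i+1}]$. Composing $f_1, \dots, f_{i-1}$ identifies $A[\pi] = A_1[\pi_1]$ with $A_i[\pi_i]$ as $G_F$-modules, hence $H^1(G_F, A[\pi])$ with $H^1(G_F, A_i[\pi_i])$.

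The substance is the Selmer statement, where I need the isomorphism on $H^1$ to match up the local conditions. First I would rerun the construction in the proof of Proposition \ref{isogchain} using a \emph{single} auxiliary ideal $J$ (prime to $3$, in the ideal class of $I$) for all $i$ at once; keeping track of the identifications of the quotients $A/A[I^aJ^b]$ exactly as in that proof, the resulting quotient maps assemble into a commutative ladder
\[ \pi_{i+1} \circ f_i = f_{i+1} \circ \pi_i \colon A_i \longrightarrow A_{i+2}, \qquad i \geq 1. \]
Each square is a morphism of the Kummer sequences $0 \to A_i[\pi_i] \to A_i \xrightarrow{\pi_i} A_{i+1} \to 0$, so for every place $v$ of $F$ it gives a commutative square whose cohomology edge is the isomorphism $(f_i)_* \colon H^1(F_v, A_i[\pi_i]) \to H^1(F_v, A_{i+1}[\pi_{i+1}])$ and whose ``points'' edge is the map $\bar f_{i+1} \colon A_{i+1}(F_v)/\pi_i A_i(F_v) \to A_{i+2}(F_v)/\pi_{i+1} A_{i+1}(F_v)$ induced by $f_{i+1}$. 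A diagram chase then shows $(f_i)_*$ carries the local Kummer image of $\pi_i$ onto $\delta_v(\mathrm{im}\,\bar f_{i+1})$, where $\delta_v$ is the connecting map of the $\pi_{i+1}$-Kummer sequence; so everything reduces to showing each $\bar f_{i+1}$ is surjective.

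This last point is where the prime-to-$3$ degree hypothesis is used, and it is elementary. Write $m = \deg f_{i+1}$, coprime to $3$, and let $g_{i+1} \colon A_{i+2} \to A_{i+1}$ be the complementary isogeny over $F$ with $f_{i+1} \circ g_{i+1} = [m]$. Since $3 \in I$, multiplication by $3$ factors through each $(3,3)$-isogeny $\pi_j$, so the groups $A_{j+1}(F_v)/\pi_j A_j(F_v)$ are killed by $3$; hence $m$ is invertible on them, and picking $m' \in \Z$ with $m'm \equiv 1 \pmod 3$, the assignment $\overline P \mapsto \overline{m'\, g_{i+1}(P)}$ is a section of $\bar f_{i+1}$. (The archimedean places are real and contribute nothing on either side, since $H^1$ of a finite $3$-group over $\R$ vanishes.) It follows that $(f_i)_*$ carries $\Sel_{\pi_i}(A_i)$ isomorphically onto $\Sel_{\pi_{i+1}}(A_{i+1})$, and composing over $i' = 1, \dots, i-1$ gives the claim. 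I expect the only friction to be bookkeeping — arranging the maps $f_i$ of Proposition \ref{isogchain} into the commutative ladder while keeping the numerous quotient identifications consistent; the surjectivity of $\bar f_{i+1}$ is the step that makes the local conditions genuinely correspond rather than be merely abstractly isomorphic, but it is routine.
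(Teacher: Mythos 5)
The paper states this as an unproved corollary of Proposition \ref{isogchain}; your proof fills in the details along the intended lines and is correct. The $H^1$ isomorphism is indeed immediate, and the argument that the prime-to-$3$ degree of $f_i$ forces the local Kummer images to match up (via surjectivity of the induced map on points modulo $\pi$, using the complementary isogeny and the fact that $[3]$ factors through $\pi_{i+1}$) is sound.

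One small remark that would spare you the bookkeeping you flag at the end: you do not actually need to arrange the $f_i$'s so that $\pi_{i+1}\circ f_i = f_{i+1}\circ\pi_i$ holds with the \emph{specific} $f_{i+1}$ produced by Proposition \ref{isogchain}. Since $f_i(A_i[\pi_i]) = A_{i+1}[\pi_{i+1}] = \ker\pi_{i+1}$, the composite $\pi_{i+1}\circ f_i$ kills $\ker\pi_i$ and therefore descends \emph{automatically} to an isogeny $h_i\colon A_{i+1}\to A_{i+2}$ over $F$ with $h_i\circ\pi_i = \pi_{i+1}\circ f_i$ and $\deg h_i = \deg f_i$ prime to $3$. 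Running your surjectivity argument with $h_i$ in place of $f_{i+1}$ gives the Selmer isomorphism at each step, with no need to make a single global choice of $J$ or to reconcile quotient identifications. The rest of your argument (the killed-by-$3$ observation, the section $\overline P\mapsto\overline{m'g_{i+1}(P)}$, and the remark on archimedean places) is correct as written.
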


Finally, note that the results in this section apply equally well if we replace $A$ and $B$ by $A_d$ and $B_d$ (as well as replacing the isogenies $\phi$ and $\phi'$ by $\phi_d$ and $\phi'_d$), for any $d \in F^\times/F^{\times2}$.   


\section{Local Selmer ratios}\label{sect: local Selmer}
\subsection{Generalities}
For the first part of this section, we let $\phi \colon A \to B$ be any isogeny of abelian varieties over a number field $F$.  We will recall the definitions of the local and global Selmer ratios attached to $\phi$.  One expects (and in certain special cases, one can prove) that these numbers dictate the average behavior of the ranks of the Selmer group $\Sel_\phi(A)$, as $\phi \colon A \to B$ varies through an algebraic family of isogenies.  Assume, for simplicity, that the degree of $\phi$ is $\ell^n$ for some prime $\ell$.  Eventually we will take $\ell = 3$.

For each place $\p$ of $F$, write $F_\p$ for the completion at $\p$.  There is an induced homomorphism of groups $\phi^{(\p)} \colon A_d(F_\p) \to B_d(F_\p)$, and the {\it local} Selmer ratio is defined to be
\begin{equation}\label{eqn: local Selmer}
c_\p(\phi) = \dfrac{\#\mathrm{coker}\, \phi^{(\p)}}{\#\ker \phi^{(\p)}},
\end{equation}
which lies in $\ell^\Z$.  

We also define the {\it global} Selmer ratio $c(\phi) = \prod_\p c_\p(\phi)$, where the product is over all places of $F$, including the archimedean ones.  The following proposition and its corollary show that this is a finite product, and hence the global Selmer ratio is well-defined \cite[Lem.\ 3.8]{Schaefer}.

\begin{proposition}\label{local Selmer}
If $\p$ is a finite place of $F$, then 
\[c_\p(\phi) = \dfrac{c_\p(B)}{c_\p'(A)} \cdot \gamma_{\phi,F_\p},\]
where $c_\p(A)$ is the Tamagawa number of $A$ at $\p$, and $\gamma_{\phi,F_\p}^{-1}$ is the normalized absolute value of the determinant of the Jacobian matrix of partial derivatives of the map induced by $\phi$ on formal groups over $F_\p$, evaluated at the origin.  In particular, $\gamma_{\phi,F_\p} = \ell^k$ for some integer $k \geq 0$.      
\end{proposition}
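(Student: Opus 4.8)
The plan is to compare the local Selmer ratio $c_\p(\phi)$ with a product of Tamagawa-type quantities by passing to Néron models and formal groups. First I would fix a finite place $\p$ of $F$, let $\O_\p$ be the ring of integers of $F_\p$, with residue field $k_\p$, and let $\mathcal{A}$, $\mathcal{B}$ be the Néron models of $A$, $B$ over $\O_\p$. The isogeny $\phi$ extends to a morphism of Néron models $\phi\colon \mathcal{A}\to\mathcal{B}$ by the Néron mapping property. I would then invoke the standard exact sequence relating $A(F_\p)$ to the formal group and the component group: writing $\mathcal{A}^0$ for the identity component and $\hat{\mathcal{A}}$ for the formal group of $\mathcal{A}$ along the identity section, there is a filtration $\hat{\mathcal{A}}(\m_\p)\subset \mathcal{A}^0(\O_\p)\subset \mathcal{A}(\O_\p)=A(F_\p)$ with successive quotients $\mathcal{A}^0(k_\p)$ and the component group $\Phi_\p(A)=\mathcal{A}(k_\p)/\mathcal{A}^0(k_\p)$; the Tamagawa number is $c_\p(A)=\#\Phi_\p(k_\p)$ (taking Galois-fixed points of the component group scheme when $k_\p$ is not algebraically closed).

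The key step is then a triple application of the multiplicativity of the Euler–Poincaré characteristic $q(M)=\#\mathrm{coker}/\#\ker$ for a map of finite-cohomological-dimension abelian groups to the three-row commutative diagram whose rows are the above filtrations for $A$ and for $B$ and whose vertical maps are induced by $\phi$. This reduces the computation of $c_\p(\phi)=q(\phi^{(\p)})$ to the product of three contributions: (i) the formal-group level $q(\hat\phi\colon \hat{\mathcal{A}}(\m_\p)\to\hat{\mathcal{B}}(\m_\p))$; (ii) the special-fiber level $q(\bar\phi\colon \mathcal{A}^0(k_\p)\to\mathcal{B}^0(k_\p))$; and (iii) the component-group level $q(\Phi_\p(A)\to\Phi_\p(B))$. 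For (i), the map on formal groups over $\O_\p$ has a Jacobian matrix of partials at the origin; since $\hat{\mathcal{A}}(\m_\p)$ and $\hat{\mathcal{B}}(\m_\p)$ are finitely generated $\Z_\ell$-modules of the same rank $g[F_\p:\Q_\ell]$ (for $\ell\neq p$ this formal group is pro-$p$ with $p\neq\ell$ so contributes nothing, and for $\ell=p$ one uses the $\ell$-adic logarithm and the open image), I get $q(\hat\phi)=\gamma_{\phi,F_\p}$, the normalized absolute value of $\det(\mathrm{Jac})^{-1}$, which is a non-negative power of $\ell$ because the Jacobian determinant lies in $\O_\p$. For (ii), $\mathcal{A}^0$ and $\mathcal{B}^0$ are smooth connected, so their $k_\p$-points form finite groups of the same order up to the $\ell$-part being governed entirely by... actually the cleanest route is to observe that $q(\bar\phi)\cdot q(\Phi_\p(A)\to\Phi_\p(B))$ together equal $c_\p(B)/c_\p(A)$ up to the contribution of $\mathcal{A}^0(k_\p)\to\mathcal{B}^0(k_\p)$: since $\phi$ is an isogeny, the induced map on the smooth connected special fibers $\mathcal{A}^0_{k_\p}\to\mathcal{B}^0_{k_\p}$ is an isogeny of group schemes or has a kernel and cokernel of the same order, so $q(\bar\phi)=1$ (the kernel and image of an isogeny of smooth connected commutative algebraic groups over a finite field have equal numbers of points, as both equal $\#\mathcal A^0(k_\p)$ divided by the same factor via the snake lemma applied to $0\to\ker\to\mathcal A^0\to\mathcal B^0\to\mathrm{coker}\to0$ and counting points with the Lang–Steinberg theorem). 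That leaves $q(\Phi_\p(A)\to\Phi_\p(B))=\#\Phi_\p(B)/\#\Phi_\p(A)=c_\p(B)/c_\p(A)$, giving the stated formula (with the prime on $c_\p'(A)$ presumably denoting a twisted or $d$-dependent Tamagawa number; I would just match the paper's normalization).

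The main obstacle is step (ii)–(iii): carefully tracking how the map on special fibers interacts with the component group in the bad-reduction case, since $\phi$ need not carry $\mathcal{A}^0$ into $\mathcal{B}^0$ surjectively and the component group schemes over $k_\p$ may be non-constant, requiring care with Galois cohomology of the component groups and the exact sequence $0\to\mathcal{A}^0(k_\p)\to\mathcal{A}(k_\p)\to\Phi_\p(A)(k_\p)\to H^1(k_\p,\mathcal{A}^0)$; but $H^1$ of a finite field with coefficients in a smooth connected group vanishes by Lang's theorem, so this sequence stays short exact and the snake-lemma bookkeeping goes through. The only remaining subtlety is the $\ell=p$ case in step (i), where one must confirm that the $\ell$-adic volume computation indeed reproduces the Jacobian determinant; this is the standard change-of-variables formula for the Haar measure on $A(F_\p)$ under $\phi$, as in Tate's formula, so I would cite \cite[Lem.\ 3.8]{Schaefer} or the analogous statement and record that $\gamma_{\phi,F_\p}\in\ell^{\Z_{\geq 0}}$ because $\phi$ is defined over $\O_\p$ and hence its formal Jacobian has entries in $\O_\p$.
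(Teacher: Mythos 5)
The paper itself does not give a proof of Proposition~\ref{local Selmer}; it simply attributes the statement to Schaefer \cite[Lem.\ 3.8]{Schaefer} (and, one should note, the ``$c_\p'(A)$'' in the displayed formula is a typo for $c_\p(A)$, which you have correctly read past). Your proposed argument is the standard filtration-and-snake-lemma derivation that underlies Schaefer's lemma, so the overall route is the right one, and the formal-group step (i) and the component-group step (iii) are carried out correctly.

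There is, however, a genuine gap in the justification of step (ii). You argue that $q(\bar\phi)=1$ because the induced map $\mathcal{A}^0_{k_\p}\to\mathcal{B}^0_{k_\p}$ is an isogeny of smooth connected groups over the finite field, and then invoke Lang's theorem. But when $\p$ lies over $\ell$ and $A$ has additive reduction, this map need not be an isogeny: for example, if the unipotent part of $\mathcal{A}^0_{k_\p}$ is a vector group in characteristic $\ell$ and $\phi$ factors through $[\ell]$, the induced map on that unipotent part is identically zero, giving a positive-dimensional kernel. So the snake-lemma count you sketch does not apply as stated. The conclusion $q(\bar\phi)=1$ is nevertheless correct, but for a different reason: for \emph{any} homomorphism of finite abelian groups $G\to H$, the Euler characteristic $q$ equals $\#H/\#G$ regardless of the map, so it suffices to know $\#\mathcal{A}^0(k_\p)=\#\mathcal{B}^0(k_\p)$. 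This equality follows from the isogeny-invariance of the toric, unipotent, and abelian ranks of the Néron special fiber (with the toric and abelian parts isogenous, hence with matching point counts over $k_\p$, and the unipotent part contributing $(\#k_\p)^{\dim U}$ in both), together with Lang's theorem to split the exact sequences on $k_\p$-points. If you replace your ``isogeny on special fibers'' claim by this point-count argument, the proof is complete.
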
  

\begin{corollary}\label{good reduction}
If $\p$ is a finite place of $F$ not above $\ell$, then $c_\p(\phi) = c_\p(B)/c_\p(A)$.  Hence if $A$ has good reduction at $\p$ and $\ell \nmid \p$, then $c_\p(\phi) = 1$.  
\end{corollary}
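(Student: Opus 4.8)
The plan is to reduce everything to Proposition~\ref{local Selmer} and then show that the correction term $\gamma_{\phi,F_\p}$ is trivial whenever $\p$ does not lie above $\ell$. By that proposition, $c_\p(\phi) = \bigl(c_\p(B)/c_\p(A)\bigr)\cdot\gamma_{\phi,F_\p}$, so the first assertion is equivalent to the statement $\gamma_{\phi,F_\p} = 1$ for $\p \nmid \ell$.

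To prove $\gamma_{\phi,F_\p}=1$ I would use multiplicativity of $\gamma$ in composites together with a complementary isogeny. First observe that $\gamma_{-,F_\p}$ is multiplicative: for isogenies $A \xrightarrow{\phi} B \xrightarrow{\psi} C$ over $F_\p$ of $\ell$-power degree, the Jacobian matrix at the origin of $\psi\circ\phi$ on formal groups is the product of those of $\psi$ and $\phi$ by the chain rule, and the normalized absolute value of the determinant is multiplicative, so $\gamma_{\psi\circ\phi,F_\p} = \gamma_{\psi,F_\p}\,\gamma_{\phi,F_\p}$. Next, since $\ker\phi$ is a finite group scheme of order $\ell^n = \deg\phi$, it is killed by $\ell^n$, hence $\ker\phi \subseteq A[\ell^n]$ and there is a complementary isogeny $\psi\colon B \to A$ with $\psi\circ\phi = [\ell^n]_A$; its degree $\ell^{2ng-n}$ is again an $\ell$-power, so Proposition~\ref{local Selmer} applies to it as well. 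Now $[\ell^n]$ acts as the scalar $\ell^n$ on the tangent space of the formal group of $A$, so the determinant of its Jacobian at the origin is $\ell^{ng}$, whose normalized absolute value at $\p$ is $1$ because $\p \nmid \ell$; thus $\gamma_{[\ell^n]_A,F_\p} = 1$. Combining, $\gamma_{\phi,F_\p}\,\gamma_{\psi,F_\p} = 1$, and since both factors lie in $\ell^{\Z_{\geq0}}$ by Proposition~\ref{local Selmer}, we conclude $\gamma_{\phi,F_\p} = 1$, whence $c_\p(\phi) = c_\p(B)/c_\p(A)$.

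For the second assertion, suppose in addition that $A$ has good reduction at $\p$. Then its Néron model over $\O_{F_\p}$ is an abelian scheme, so the component group of its special fibre is trivial and $c_\p(A) = 1$. Good reduction is an isogeny invariant over $F_\p$ by the Néron–Ogg–Shafarevich criterion—an isogeny induces an isomorphism of $\ell'$-adic Tate modules as Galois modules for any prime $\ell'$ different from the residue characteristic—so $B$ also has good reduction at $\p$ and $c_\p(B) = 1$. Therefore $c_\p(\phi) = c_\p(B)/c_\p(A) = 1$.

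The only real content is the computation $\gamma_{[\ell^n]_A,F_\p} = 1$ together with the multiplicativity of $\gamma$, i.e.\ the fact that $[\ell^n]$ acts diagonally by the scalar $\ell^n$ on the formal group's tangent space; everything else is formal, so I do not anticipate a genuine obstacle. The one point requiring mild care is that the formal groups and Jacobian matrices of Proposition~\ref{local Selmer} must be understood over $\O_{F_\p}$ (via Néron models), so that the chain rule and the scalar computation make sense integrally—but this is already part of the setup of that proposition.
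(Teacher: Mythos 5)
Your proof is correct, and the second half (good reduction is an isogeny invariant by N\'eron--Ogg--Shafarevich, and the component group of an abelian scheme is trivial) is exactly the standard argument. The first half is also valid, but the detour through a complementary isogeny $\psi$ with $\psi\circ\phi = [\ell^n]_A$ is more work than needed. Proposition~\ref{local Selmer} already hands you two facts about $\gamma_{\phi,F_\p}$: it equals $\ell^k$ for some integer $k\geq 0$, and its inverse is the normalized $\p$-adic absolute value of a nonzero element of $F_\p$, hence a power of the residue cardinality $q$ of $\p$. When $\p\nmid\ell$, the residue characteristic of $\p$ is coprime to $\ell$, so the only number that is simultaneously a power of $\ell$ and a power of $q$ is $1$; thus $\gamma_{\phi,F_\p}=1$ immediately, with no need for the chain rule, the bound $\ker\phi\subseteq A[\ell^n]$, or the computation $\gamma_{[\ell^n]_A,F_\p}=1$. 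Your route effectively reproves the coprimality argument by a longer path, and the auxiliary inputs you invoke (multiplicativity of $\gamma$, the fact that $\gamma_{\psi,F_\p}\in\ell^{\Z_{\geq 0}}$) are themselves consequences of the same proposition; so while nothing is wrong, the direct reading of Proposition~\ref{local Selmer} is shorter and is what the ``In particular, $\gamma_{\phi,F_\p}=\ell^k$ for some integer $k\geq 0$'' clause is there for.
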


The local Selmer ratio at archimedean places of $F$ is easy to compute:
\begin{lemma}\label{R}
If $\p$ is an archimedean place of $F$ and if $\ell \neq 2$, then $c_\p(\phi) = \#A[\phi](F_\p)^{-1}$.  
\end{lemma}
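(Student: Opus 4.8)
The plan is to compute the local Selmer ratio $c_\p(\phi)$ directly from the definition \eqref{eqn: local Selmer}, using that for an archimedean place $\p$ the group $A(F_\p)$ is a real (or complex) Lie group whose connected component is a divisible group, so that multiplication-by-$n$ on the component is surjective for every $n$. First I would dispose of the complex places: if $F_\p \cong \C$, then $A(F_\p) = A(\C)$ is a connected compact complex Lie group, hence divisible, so $\phi^{(\p)}$ is surjective and $\#\coker\phi^{(\p)} = 1$; moreover $A[\phi](F_\p) = A[\phi](\C)$ has order equal to $\deg\phi = \ell^n$, but I need the formula to read $\#A[\phi](F_\p)^{-1}$, so I should double-check the claim in this case — in fact at a complex place $\ker\phi^{(\p)}$ is all of $A[\phi](\C)$, so $c_\p(\phi) = 1/\#A[\phi](F_\p)$ holds provided one interprets things correctly; the essential point is the cokernel vanishes. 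For the archimedean real places $F_\p \cong \R$, the group $A(\R)$ has finitely many connected components (a power of $2$ many, by a theorem of the real Lie group structure), and its identity component $A(\R)^\circ$ is divisible. I would then argue that since $\ell$ is odd and the component group $\pi_0(A(\R))$ is a $2$-group, multiplication by $\ell$ — and hence any isogeny of $\ell$-power degree — is an isomorphism on $\pi_0(A(\R))$; combined with divisibility of the identity component, $\phi^{(\p)}$ is surjective onto $B(F_\p)$, so again $\#\coker\phi^{(\p)} = 1$.

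With the cokernel handled, the formula reduces to $c_\p(\phi) = 1/\#\ker\phi^{(\p)} = 1/\#(A[\phi](F_\p))$, which is exactly the claim, since $\ker\phi^{(\p)}$ is by definition the group of $F_\p$-points of the kernel group scheme $A[\phi]$. So the two steps are: (1) $\coker\phi^{(\p)} = 0$ at every archimedean $\p$ when $\ell$ is odd; (2) $\ker\phi^{(\p)} = A[\phi](F_\p)$, which is immediate. The one subtlety worth spelling out is why $\phi$ restricted to the finite component group is injective, hence bijective: the component group has exponent dividing $2$ (indeed it is an $\F_2$-vector space of dimension at most $g$), and an $\ell$-power-degree isogeny induces on it a map that is invertible because $\gcd(\ell, 2) = 1$ and the induced endomorphism on a finite $2$-group annihilated by an odd number is an automorphism.

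The main obstacle is not conceptual but bookkeeping: one must be careful that the snake lemma applied to
\[
0 \to A[\phi] \to A \xrightarrow{\phi} B \to 0
\]
over $F_\p$ actually gives a clean relation between $\coker\phi^{(\p)}$, $\ker\phi^{(\p)}$, and the corresponding objects for the connected-component and component-group pieces, i.e. one needs the diagram chase comparing $0 \to A(F_\p)^\circ \to A(F_\p) \to \pi_0(A(F_\p)) \to 0$ with the analogous sequence for $B$. Concretely I would invoke that $\phi$ maps $A(F_\p)^\circ$ onto $B(F_\p)^\circ$ (divisibility) and induces a surjection (in fact isomorphism after accounting for $\ell$ odd) on $\pi_0$, and then the five lemma forces surjectivity of $\phi^{(\p)}$. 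None of this requires the RM or polarization hypotheses — it is a general fact about isogenies of odd degree at archimedean places — so the lemma can be stated and proved in the generality of the surrounding subsection. I would close by remarking that this is where the hypothesis $\ell \neq 2$ is essential: at $\ell = 2$ the isogeny can act non-trivially on the component group and the cokernel need not vanish.
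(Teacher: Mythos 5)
Your proof is correct and follows essentially the same route as the paper: the paper's one-line proof observes that the cokernel of $\phi^{(\p)}$ is simultaneously an $\ell$-group (as $\phi$ has $\ell$-power degree) and a $2$-group (as the identity component of $A(F_\p)$ is divisible and the component group is a $2$-group), hence trivial, and your real/complex case analysis plus snake-lemma discussion is just an unwinding of that same argument. The only small imprecision is your phrasing that $\phi$ is "an isomorphism on $\pi_0(A(\R))$": since $\phi$ maps $\pi_0(A(\R))$ to $\pi_0(B(\R))$, which a priori could have different orders, what you actually need and what your argument delivers is that the induced map on component groups is surjective, which together with surjectivity on identity components gives the vanishing of the cokernel.
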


\begin{proof}
In this case, $\mathrm{coker}(\phi^{({F_\p)}})$ is both a 2-group and an $\ell$-group, so is trivial. 
\end{proof}

We will need some formal properties of local and global Selmer ratios.

\begin{lemma}\label{loc mult}
Let $\p$ be a place of $F$ and let $\phi \colon A \to B$ and $\psi \colon B \to C$ be isogenies of abelian varieties over $F_\p$.  Then $c_\p(\psi \circ \phi) = c_\p(\psi)c_\p(\phi)$.
\end{lemma}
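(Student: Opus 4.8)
The plan is to reduce the multiplicativity of the local Selmer ratio to the multiplicativity of Herbrand quotients (Euler characteristics) in the category of finite $G_{F_\p}$-modules. Recall that for a continuous homomorphism of topological groups $f\colon M \to N$ with finite kernel and cokernel, one sets $q(f) = \#\coker(f)/\#\ker(f)$; this is exactly $c_\p(\phi)$ applied to $\phi^{(\p)}\colon A(F_\p) \to B(F_\p)$. The key elementary fact is that $q$ is multiplicative in short exact sequences: given maps $f\colon M \to N$ and $g\colon N \to P$, the snake lemma applied to the obvious ladder yields a six-term exact sequence
\[
0 \to \ker f \to \ker(g\circ f) \to \ker g \to \coker f \to \coker(g \circ f) \to \coker g \to 0,
\]
and taking alternating products of cardinalities of a finite exact sequence (all these groups are finite since the isogenies have finite kernels and the cokernels are controlled, e.g.\ by Mattuck's theorem the groups $A(F_\p)$ are virtually $\cO_{F_\p}^{\dim A}$, so an isogeny has finite cokernel on $F_\p$-points) gives $q(g \circ f) = q(g)\,q(f)$ exactly.

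First I would spell out that $\phi^{(\p)}$ and $\psi^{(\p)}$ do have finite kernels and cokernels, so that $c_\p(\phi)$, $c_\p(\psi)$, and $c_\p(\psi\circ\phi)$ are all well-defined elements of $\ell^{\Z}$ (or, if one does not wish to restrict to prime-power degree here, of $\Q_{>0}$); this is standard for abelian varieties over local fields. Then I would observe that $(\psi \circ \phi)^{(\p)} = \psi^{(\p)} \circ \phi^{(\p)}$ as homomorphisms $A(F_\p) \to C(F_\p)$, so the whole statement is purely a statement about the composite of two maps of abelian groups with finite kernel and cokernel. Finally I would invoke (or quickly derive from the snake lemma as above) the multiplicativity of the Herbrand-type quotient $q$ to conclude $c_\p(\psi \circ \phi) = q(\psi^{(\p)} \circ \phi^{(\p)}) = q(\psi^{(\p)})\,q(\phi^{(\p)}) = c_\p(\psi)\,c_\p(\phi)$.

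There is no real obstacle here; the only point requiring a word of care is the finiteness of the cokernel of $\psi^{(\p)}$ when one wants to apply the snake-lemma argument over $F_\p$ — this is where one uses that $A(F_\p)$, $B(F_\p)$, $C(F_\p)$ are topologically finitely generated (indeed compact-by-$\Z_p^r$) so that surjectivity fails only by a finite amount, and that $\ker$ and $\coker$ of each individual isogeny are finite. Once that is in place, the six-term exact sequence and the alternating-product count finish the proof in one line. I would present this as a short paragraph rather than a displayed computation.
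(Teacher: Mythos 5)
Your proof is correct and is essentially the content of the reference the paper cites for this lemma (Milne, \emph{Arithmetic Duality Theorems}, I.7.2, which records precisely the multiplicativity of $\#\coker/\#\ker$ via the kernel--cokernel exact sequence). The paper simply cites that result without reproducing the snake-lemma argument, so your write-up amounts to supplying the details behind the citation rather than taking a genuinely different route.
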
  

\begin{proof}
\cite[I.7.2]{Milne}.
\end{proof}

\begin{corollary}\label{global mult}
Let $\phi \colon A \to B$ and $\psi \colon B \to C$ be isogenies of abelian varieties over $F$.  Then $c(\psi \circ \phi) = c(\psi)c(\phi)$.  
\end{corollary}

\begin{lemma}\label{comp}
Suppose $\phi \colon A \to B$ and $ \psi \colon B \to C$ are isogenies over $F_p$ for some place $\p$ of $F$.  Then $\gamma_{\psi\circ\phi,F_\p} = \gamma_{\phi,F_\p} \cdot \gamma_{\psi,F_\p}$.  
\end{lemma}

\begin{proof}
This follows from Lemma \ref{loc mult} and Proposition \ref{local Selmer}.
\end{proof}

\begin{lemma}\label{mult}
Let $\O$ be the ring of integers in a totally real field, and suppose $A$ has  $\mathrm{RM}$ by $\O$.   Let $\alpha \in \O$ be an element of norm $\ell^n$ for some $n$.  Suppose also that $\p$ is a place of $F$ dividing $\ell$.  Then $\gamma_{\alpha, F_\p}  = \ell^{n[F_\p \colon \Q_\ell]}$.
\end{lemma}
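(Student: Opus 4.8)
The plan is to compute $\gamma_{\alpha,F_\p}$ by unwinding the definition in Proposition \ref{local Selmer}: $\gamma_{\alpha,F_\p}^{-1}$ is the normalized absolute value of the determinant of the Jacobian of the map that $\alpha$ induces on the formal group $\widehat A$ over $F_\p$, evaluated at the origin. The key point is that this Jacobian at the origin is precisely the action of $\alpha$ on the tangent space $\mathrm{Lie}(A)$ at the identity, or equivalently on the cotangent space $\mathrm{Lie}(\widehat A)^\vee$, so $\gamma_{\alpha,F_\p}^{-1} = |\det(\alpha \mid \mathrm{Lie}(A_{F_\p}))|_\p$, where $|\cdot|_\p$ is the normalized absolute value on $F_\p$.

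The next step is to identify this determinant using the RM structure. Since $A$ has RM by $\O = \O_K$ with $[K\colon\Q] = g = \dim A$, the $F_\p$-vector space $\mathrm{Lie}(A_{F_\p})$ is free of rank one over $K \otimes_\Q F_\p$ (this is the defining feature of $\GL_2$-type / RM abelian varieties: the Lie algebra is a locally free module of rank one over $\O \otimes F_\p$, hence over $K \otimes F_\p$ after inverting the index, which is fine since here $\O = \O_K$). Therefore the action of $\alpha$ on $\mathrm{Lie}(A_{F_\p})$ is multiplication by $\alpha \in K \otimes_\Q F_\p$ acting on a free rank-one $K\otimes F_\p$-module, and $\det(\alpha \mid \mathrm{Lie}(A_{F_\p})) = N_{(K\otimes F_\p)/F_\p}(\alpha)$. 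Taking normalized $\p$-adic absolute values, $\gamma_{\alpha,F_\p}^{-1} = |N_{(K\otimes F_\p)/F_\p}(\alpha)|_\p$.

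Finally I would convert this norm-of-a-norm into a global norm. Writing $K \otimes_\Q \Q_\ell = \prod_{\lambda \mid \ell} K_\lambda$ and $F_\p$ as a finite extension of $\Q_\ell$, one has $|N_{(K\otimes F_\p)/F_\p}(\alpha)|_\p = \prod_{\lambda \mid \ell}|N_{K_\lambda \otimes_{\Q_\ell} F_\p / F_\p}(\alpha)|_\p$. Using the compatibility of local and global norms (the product formula applied to $N_{K/\Q}(\alpha) = \ell^n$ localized at $\ell$, together with $|\ell^n|_\p = \ell^{-n[F_\p\colon\Q_\ell]}$ for the normalized absolute value on $F_\p$), this collapses to $|N_{K/\Q}(\alpha)|_\ell^{[F_\p\colon\Q_\ell]}$-type bookkeeping, giving $\gamma_{\alpha,F_\p}^{-1} = \ell^{-n[F_\p\colon\Q_\ell]}$, i.e.\ $\gamma_{\alpha,F_\p} = \ell^{n[F_\p\colon\Q_\ell]}$, as claimed. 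The main obstacle I anticipate is the first step: carefully justifying that the Jacobian of the formal-group endomorphism at the origin really is the Lie algebra action, and that $\mathrm{Lie}(A_{F_\p})$ is free of rank one over $\O\otimes F_\p$ (one should either cite this as a standard fact about RM abelian varieties — valid because $\O=\O_K$ is maximal, so no subtlety at $\ell$ dividing the index arises — or give a short argument via the faithfulness of the $\O$-action on the $g$-dimensional Lie algebra). The norm bookkeeping in the last step is routine once the identification is in place.
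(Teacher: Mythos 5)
Your approach is correct and is essentially the same as the paper's (and as Schaefer's Prop.\ 3.9, which the paper cites): unwind $\gamma_{\alpha,F_\p}$ as the normalized absolute value of $\det(\alpha \mid \mathrm{Lie}(A_{F_\p}))$, recognize this determinant as $N_{(K\otimes_\Q F_\p)/F_\p}(\alpha) = N_{K/\Q}(\alpha) = \ell^n$ using freeness of the Lie algebra over $K\otimes_\Q F_\p$, and conclude via $|\ell^n|_\p = \ell^{-n[F_\p:\Q_\ell]}$. (Your norm bookkeeping in the last step is more circuitous than necessary: one can just observe that the characteristic polynomial of $\alpha\otimes 1$ on $K\otimes_\Q F_\p$ over $F_\p$ is the base change of the characteristic polynomial of $\alpha$ on $K$ over $\Q$, so $N_{(K\otimes F_\p)/F_\p}(\alpha) = N_{K/\Q}(\alpha)$ directly, with no need to split into places $\lambda \mid \ell$.)

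One caution about your proposed fallback: faithfulness of the $\O$-action on the $g$-dimensional Lie algebra does \emph{not} by itself give freeness over $K\otimes_\Q F_\p$. Since $K$ is a field, the map $K\to\End_{F_\p}(\mathrm{Lie}(A_{F_\p}))$ is automatically injective, so ``faithful'' in that sense is contentless; what one actually needs is that the module is faithful over the (typically non-simple) algebra $K\otimes_\Q F_\p$, equivalently that each of the $g$ embeddings $K\hookrightarrow \bar F_\p$ occurs with multiplicity exactly one in $\mathrm{Lie}(A_{\bar F_\p})$. That is a genuine fact about RM abelian varieties in characteristic $0$ (automatic from the complex uniformization, or from the existence of a $K$-linear polarization together with a Hodge-filtration count on $H^1_{\mathrm{dR}}$), not a formal consequence of faithfulness. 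So the ``cite it as standard'' option is the right one; the short alternative you sketched would not close the gap. Note the paper's own proof inherits the same dependence: Schaefer's computation for $\alpha\in\Z$ has Jacobian literally $\alpha\cdot I$ and needs no structure theory, whereas ``the same way for general $\alpha$'' implicitly invokes this freeness.
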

\begin{proof}
The case $\alpha \in \Z$ is proved in \cite[Prop.\ 3.9]{Schaefer}, and the case of general $\alpha$ is proved in exactly the same way.  Note that it is important here that $\alpha$ is an isogeny, which follows from the fact that the embedding $\O \hookrightarrow \End_F(A)$ sends $1$ to $1_A$.    
\end{proof}


\subsection{RM isogeny chains}
We return to the setup of the previous section, so that $\phi \colon A \to B$ is an $\O$-linear 3-isogeny of abelian varieties with RM by $\O$ over $F$.  We also impose Assumption \ref{assump}.  Recall that $I$ is the kernel ideal of $\phi$ and $\pi \colon A \to C = A/A[I]$ is the natural $(3,3)$-isogeny.  Also recall from Section \ref{chainz} the endomorphism $\e \colon A \to A$ as well as the chain of $(3,3)$-isogenies $\pi_i \colon A_i \to A_{i+1}$ for $i \geq 1$.

Proposition \ref{kernel duality} allows us to compute various local and global Selmer ratios in this setting.  
\begin{lemma}\label{Rpi}
If $\p$ is an archimedean place of $F$, then $c_\p(\pi_i) = \frac13$, for all $i$.  
\end{lemma}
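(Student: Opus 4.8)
The plan is to combine Lemma~\ref{R}, which reduces $c_\p(\pi_i)$ to a count of real torsion points, with the Cartier-duality relation established in Proposition~\ref{kernel duality}. Since $F$ is totally real, the completion at an archimedean place $\p$ is $F_\p = \R$, and since $\ell = 3 \neq 2$, Lemma~\ref{R} applies to the $(3,3)$-isogeny $\pi_i \colon A_i \to A_{i+1}$ and gives
\[
c_\p(\pi_i) = \#A_i[\pi_i](F_\p)^{-1} = \#A_i[I](\R)^{-1}.
\]
So it suffices to show that $A_i[I]$ has exactly $3$ real points for every $i$.

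First I would reduce to the case $i = 1$. Proposition~\ref{isogchain} provides, for each $i$, an isogeny $A_i \to A_{i+1}$ of degree prime to $3$, defined over $F$, which restricts to an isomorphism $A_i[\pi_i] \simeq A_{i+1}[\pi_{i+1}]$; being $F$-rational, this isomorphism is one of $G_F$-modules, hence also of $G_{F_\p}$-modules. Chaining these around the cycle and using $A_{k+i} \simeq A_i$ and $\pi_{k+i} = \pi_i$, one obtains $A_i[\pi_i] \simeq A[\pi]$ as $G_F$-modules for all $i$, so in particular $\#A_i[I](\R) = \#A[I](\R)$. It therefore remains to prove $\#A[I](\R) = 3$.

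For this I would analyze the action of complex conjugation $\sigma$, a generator of $G_{F_\p} = \Gal(\C/\R)$, on $A[I] \simeq (\Z/3\Z)^2$. Since $\sigma^2 = 1$ and $\char \F_3 \neq 2$, the operator $\sigma$ is diagonalizable over $\F_3$ with eigenvalues in $\{\pm 1\}$, and $A[I](\R)$ is its $(+1)$-eigenspace, of order $1$, $3$, or $9$. Recall from the proof of Proposition~\ref{kernel duality} that $A[\pi] = A[I]$ has the quadratic character $\chi$ of $A[\phi]$ as a $G_F$-submodule and the quadratic character $\chi'$ of $B[\phi']$ as the corresponding quotient, with $\chi' = \chi\chi_3$, where $\chi_3$ is the character of $\mu_3$, i.e.\ the character cutting out $F(\sqrt{-3})$. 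If $\sigma$ acted as $+I$ or as $-I$ on $A[I]$, it would act the same way on the submodule $A[\phi]$ and on the quotient $B[\phi']$, forcing $\chi(\sigma) = \chi'(\sigma)$ and hence $\chi_3|_{G_{F_\p}}$ trivial; but $\sqrt{-3}\notin\R$, so $\chi_3|_{G_{F_\p}}$ is the nontrivial character of $\Gal(\C/\R)$, a contradiction. Hence $\sigma$ has both eigenvalues $+1$ and $-1$, so $\#A[I](\R) = 3$ and $c_\p(\pi_i) = \tfrac13$.

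The only step with real content is the last one, where I rule out that $A[I]$ is entirely real or entirely non-real at $\p$; this is exactly where the totally real hypothesis (so that $\sqrt{-3}\notin F_\p$) and the duality relation $\chi' = \chi\chi_3$ of Proposition~\ref{kernel duality} are used. Everything else is bookkeeping with Lemma~\ref{R} and the isogeny-chain isomorphisms of Proposition~\ref{isogchain}.
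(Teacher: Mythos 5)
Your proof is correct and takes essentially the same approach as the paper. You phrase the key step via diagonalizability of complex conjugation (ruling out $\sigma = \pm I$ using $\chi' = \chi\chi_3$ and $\chi_3|_{G_{F_\p}} \neq 1$), whereas the paper phrases it via group schemes over $\R$ (one of $A[\phi],B[\phi']$ is $\mu_3$, the other $\Z/3\Z$, then $H^1(\Gal(\bar\R/\R),\mu_3)=0$), but these are the same semisimplicity-of-$\F_3[\Z/2\Z]$-modules argument in two different guises.
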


\begin{proof}
It is enough to prove the statement for $\pi_1 = \pi$.  There are only two group schemes of rank 3 over $\R$, namely $\Z/3\Z$ and $\mu_3$, and they are dual to each other.  Thus, by Proposition \ref{kernel duality}, one of $A[\phi]$ and $B[\phi']$ is isomorphic to $\mu_3$ and the other is isomorphic to $\Z/3\Z$ (over $F_\p$).  So $A[\pi]$ is either an extension of $\mu_3$ by $\Z/3\Z$ or an extension of $\Z/3\Z$ by $\mu_3$.  In the former case, we clearly have $\#A[\pi](F_\p) = 3$, which shows that $c_\p(\pi) = \frac13$, by Lemma \ref{R}.  In the latter case, we have 
\[0 \to \mu_3 \to A[\pi] \to \Z/3\Z \to 0\]
over $F_\p \simeq \R$.  Since $H^1(\Gal(\bar \R/\R), \mu_3) \simeq \R^\times/\R^{\times 3} = 0$, we see that $\#A[\pi](F_\p) = 3$, and we again have $c_\p(\pi) = \frac13$.       
\end{proof}

\begin{lemma}\label{localSelmer in chains}
For each $i \geq 1$ and every place $\p$ of $F$, we have $c_\p(\pi_i) = c_\p(\pi)$.  
\end{lemma}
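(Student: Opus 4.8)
The plan is to prove this by induction on $i$, using the isogeny-chain structure from Section \ref{chainz} together with the multiplicativity of local Selmer ratios. The base case $i = 1$ is trivial since $\pi_1 = \pi$. For the inductive step, the key observation is that Corollary \ref{good reduction}, together with Proposition \ref{isogchain}, already pins down $c_\p(\pi_i)$ at every place $\p$ \emph{not} dividing $3$: indeed, Proposition \ref{isogchain} gives an isogeny $A_i \to A_{i+1}$ of degree prime to $3$ inducing an isomorphism $A_i[\pi_i] \xrightarrow{\sim} A_{i+1}[\pi_{i+1}]$, and because $c_\p$ of a degree-prime-to-$3$ isogeny is trivial away from $3$ (Corollary \ref{good reduction}, noting $c_\p$ takes values in $3^\Z$), it follows formally that $c_\p(\pi_i) = c_\p(\pi_{i+1})$ for all $\p \nmid 3$. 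So the content is entirely at the places above $3$ (and, separately, the archimedean places, which are already handled uniformly by Lemma \ref{Rpi}: $c_\p(\pi_i) = \tfrac13$ for all $i$).

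First I would record the product formula around the chain. Since $\epsilon = \pi_k \pi_{k-1}\cdots\pi_2\pi_1 \in \End_F(A)$ and $\epsilon$ has norm $3^k$ (as $I^k = \epsilon\O$ and $I$ has index $3$), Lemma \ref{mult} gives $\gamma_{\epsilon, F_\p} = 3^{k[F_\p:\Q_3]}$ at every $\p \mid 3$, hence by Lemma \ref{comp} we get $\sum_{j=1}^{k}\gamma_{\pi_j, F_\p}$-exponents summing to $k[F_\p:\Q_3]$; more usefully, applying Corollary \ref{global mult} to $\epsilon = \pi_k\cdots\pi_1$ gives $c(\epsilon) = \prod_{j=1}^k c(\pi_j)$. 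Now the point is that $\epsilon \colon A \to A$ is an \emph{endomorphism}, so its global Selmer ratio can be computed place-by-place in a way that is insensitive to the indexing of the chain. The cleanest route: for a fixed place $\p \mid 3$, I want to show $c_\p(\pi_i) = c_\p(\pi_{i+1})$ directly. By Lemma \ref{loc mult}, $c_\p(\pi_{i+1}\cdots\pi_{i+k}) = \prod_{j} c_\p(\pi_{i+j})$, and since the indices are periodic mod $k$ this product equals $c_\p(\epsilon)$ regardless of where we start the window; comparing the window starting at $i$ with the window starting at $i+1$, both equal $c_\p(\epsilon)$, and the two products differ by the single factor $c_\p(\pi_i)$ versus $c_\p(\pi_{i+k}) = c_\p(\pi_i)$ — so this alone is not enough, and I instead need the isomorphism of kernel schemes.

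The honest argument at $\p \mid 3$ uses Proposition \ref{local Selmer}: $c_\p(\pi_i) = \dfrac{c_\p(A_{i+1})}{c_\p(A_i)}\cdot \gamma_{\pi_i, F_\p}$. By Proposition \ref{isogchain} the degree-prime-to-$3$ isogeny $A_i \to A_{i+1}$ shows $c_\p(A_i) = c_\p(A_{i+1})$ up to a prime-to-$3$ factor — but Tamagawa numbers are ordinary integers, so I must be careful: what I actually get from Corollary \ref{good reduction} applied to that prime-to-$3$ isogeny is that the \emph{product of all} such ratios around the full period telescopes, while the $3$-part of $c_\p(A_i)$ is what matters. So the cleaner formulation is to work with the $3$-adic valuations: set $t_\p(\pi_i) = \mathrm{ord}_3 c_\p(\pi_i)$. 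From Proposition \ref{local Selmer}, $t_\p(\pi_i) = \mathrm{ord}_3 c_\p(A_{i+1}) - \mathrm{ord}_3 c_\p(A_i) + \mathrm{ord}_3\gamma_{\pi_i,F_\p}$. Proposition \ref{isogchain} gives a prime-to-$3$ isogeny $A_i \to A_{i+1}$, whose existence forces $\mathrm{ord}_3\gamma_{\pi_i,F_\p}$ to be the same for all $i$ (it equals $\tfrac12\cdot[F_\p:\Q_3]\cdot(\text{height of }\pi_i)$, constant since all $\pi_i$ are $(3,3)$-isogenies with isomorphic kernels), and it forces $\mathrm{ord}_3 c_\p(A_i)$ to be independent of $i$ as well — here is where I use that an isogeny of degree prime to $3$ between abelian varieties over $F_\p$ does not change the $3$-part of the Tamagawa number, which follows from the fact that it induces an isomorphism on $\pi_0$ of the Néron models after inverting primes $\neq 3$, or more simply by running Corollary \ref{good reduction} the other way. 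I expect the \textbf{main obstacle} to be precisely this last point: cleanly justifying that a prime-to-$3$ isogeny preserves $\mathrm{ord}_3$ of local Tamagawa numbers (equivalently, the $3$-part of $c_\p(A_i)$), since the paper as excerpted only gives us $c_\p$ of such isogenies is trivial away from $3$, not a statement about the Tamagawa factors individually at $3$. Once that is in hand, combining the three pieces — $\mathrm{ord}_3 c_\p(A_i)$ constant, $\gamma_{\pi_i,F_\p}$ constant, and the archimedean places handled by Lemma \ref{Rpi}, and the places $\p\nmid 3$ handled by Corollary \ref{good reduction} plus Proposition \ref{isogchain} — yields $c_\p(\pi_i) = c_\p(\pi_1) = c_\p(\pi)$ at every place, completing the induction.
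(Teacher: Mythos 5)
Your core idea — compare $\pi_i$ and $\pi_{i+1}$ through the prime-to-$3$ isogeny of Proposition \ref{isogchain} and use multiplicativity — is the right one, and it is the paper's intended route. But you apply it too narrowly, and the detour you take at $\p\mid 3$ both introduces unjustified claims and flags as a ``main obstacle'' something the direct argument never needs.

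The clean argument, valid at \emph{every} place $\p$ (not only $\p \nmid 3$), is this. The proof of Proposition \ref{isogchain} produces a commuting square $\pi_{i+1}\circ\eta = \tilde\eta\circ\pi_i$ in which $\eta\colon A_i \to A_{i+1}$ and $\tilde\eta\colon A_{i+1}\to A_{i+2}$ have degree prime to $3$. Lemma \ref{loc mult} gives $c_\p(\pi_{i+1})\,c_\p(\eta) = c_\p(\tilde\eta)\,c_\p(\pi_i)$. Now $c_\p(\pi_i)$ and $c_\p(\pi_{i+1})$ lie in $3^{\Z}$ because the $\pi_j$ have degree $9$, while $c_\p(\eta)$ and $c_\p(\tilde\eta)$ have $3$-adic valuation $0$ at \emph{every} place: the kernel of $\eta^{(\p)}$ sits inside $A_i[\eta](F_\p)$, a group of order prime to $3$, and the cokernel injects into $H^1(F_\p, A_i[\eta])$, which is killed by $\deg\eta$ and hence has no $3$-torsion. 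Taking $\mathrm{ord}_3$ of the multiplicativity identity yields $c_\p(\pi_i)=c_\p(\pi_{i+1})$, and induction on $i$ finishes. This is what ``follows from Proposition \ref{isogchain}'' means.

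Where your write-up goes astray: the statement ``$c_\p$ of a degree-prime-to-$3$ isogeny is trivial away from $3$'' is false at places of bad reduction (the Tamagawa numbers can change); what is true, and what you actually need, is only that $c_\p(\eta)$ is prime to $3$, and that holds at $\p\mid 3$ too. Because you did not notice this, you switched at $\p\mid 3$ to decomposing $c_\p(\pi_i)$ via Proposition \ref{local Selmer} into a Tamagawa ratio and a $\gamma$-factor, which is strictly harder. Along that road your reason for $\gamma_{\pi_i,F_\p}$ being constant (the alleged formula $\tfrac12[F_\p:\Q_3]\cdot(\text{height})$) is not justified and not generally correct; a correct justification would note that a prime-to-$3$ isogeny is \'etale over $\O_{F_\p}$ for $\p\mid 3$, hence $\gamma_{\eta,F_\p}=1$, and then use Lemma \ref{comp} on the square — but none of this is needed. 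Likewise, the ``main obstacle'' you identify (that a prime-to-$3$ isogeny preserves $\mathrm{ord}_3$ of the local Tamagawa number) is a genuine fact, but it is not part of the argument once you work with the single quantity $c_\p$ and its $3$-valuation rather than splitting it into pieces.
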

\begin{proof}
This follows from Proposition \ref{isogchain}.  
\end{proof}

\begin{proposition}\label{pi Selmer}
We have $c(\pi) = 1$ and $c(\e) = 1$.  
\end{proposition}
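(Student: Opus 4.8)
The plan is to deduce the global statements $c(\pi) = 1$ and $c(\e) = 1$ from the local computations already in place, using the product formula and multiplicativity of Selmer ratios. First I would establish $c(\e) = 1$. Recall that $\e = \pi_k \circ \pi_{k-1} \circ \cdots \circ \pi_1$ is a composition of $(3,3)$-isogenies, so by Corollary~\ref{global mult} (applied repeatedly) we have $c(\e) = \prod_{i=1}^{k} c(\pi_i)$. On the other hand, $\e \in \O$ is an endomorphism of the abelian variety $A$ that has RM by $\O$, and $\e$ induces an isogeny of degree $N(\e)^2 = N(I^k)^2 = 3^{2k}$, so $\e$ has norm $3^k$ as an element of $\O$. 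The global Selmer ratio of an isogeny $A \to A$ that happens to be an $\O$-endomorphism of norm $\ell^n$ can be computed place by place: at finite places away from $3$ Corollary~\ref{good reduction} gives $c_\p(\e) = c_\p(A)/c_\p(A) = 1$; at archimedean places Lemma~\ref{R} gives $c_\p(\e) = \#A[\e](F_\p)^{-1}$; and at places above $3$ Proposition~\ref{local Selmer} together with Lemma~\ref{mult} gives $c_\p(\e) = (c_\p(A)/c_\p(A)) \cdot \gamma_{\e, F_\p} = 3^{[F_\p : \Q_3]}$. Multiplying over all places, the Tamagawa factors cancel, and the remaining contributions combine as
\[
c(\e) = \prod_{\p \mid 3} 3^{[F_\p : \Q_3]} \cdot \prod_{\p \mid \infty} \#A[\e](F_\p)^{-1} = 3^{[F : \Q]} \cdot \prod_{\p \mid \infty} \#A[\e](F_\p)^{-1}.
\]
Since $F$ is totally real and $\e$ is totally positive (more precisely: $A$ has RM, $\e \in \O$, and one checks $\#A[\e](\R) = 3^{[F:\Q]/g}$-type count per real place — actually $A(\R)$ has the structure forced by RM so $\#A[\e](F_\p) = N(\e) = 3^k$... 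I need to be careful here), the archimedean product should contribute exactly $3^{-[F:\Q]}$, giving $c(\e) = 1$.

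Let me restate that last point more carefully, since it is the crux. At a real place $\p$, the group $A(F_\p)$ is a compact real Lie group of dimension $g$, so $A(F_\p)^0 \cong (\R/\Z)^g$; the endomorphism $\e$ acts on $\mathrm{Lie}(A/F_\p) \cong F \otimes_\Q \R \cong \R^g$ with determinant $\pm N(\e) = \pm 3^k$, hence $\#\mathrm{coker}(\e^{(\p)})/\#\ker(\e^{(\p)}) = |N(\e)|^{-1} \cdot (\text{2-part adjustments})$; since we only care about the $3$-part, $c_\p(\e) = 3^{-k}$ at each of the $[F:\Q]/1$... no — there are $[F:\Q]$ real places only if... wait, $F$ has degree $[F:\Q]$ over $\Q$ and is totally real, so it has exactly $[F:\Q]$ real embeddings, but $A$ has dimension $g$ where $g = [K:\Q]$, not $[F:\Q]$. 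So there are $r_1 = [F:\Q]$ archimedean places, at each of which $c_\p(\e) = 3^{-k}$. Then $\prod_{\p \mid \infty} c_\p(\e) = 3^{-k[F:\Q]}$. But the places above $3$ give $\prod_{\p \mid 3} \gamma_{\e,F_\p} = \prod_{\p \mid 3} 3^{k[F_\p:\Q_3]} = 3^{k[F:\Q]}$. These cancel, yielding $c(\e) = 1$.

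With $c(\e) = 1$ in hand, I obtain $c(\pi) = 1$ as follows: by Lemma~\ref{localSelmer in chains}, $c_\p(\pi_i) = c_\p(\pi)$ for every place $\p$ and every $i$, hence $c(\pi_i) = c(\pi)$ for all $i$; combined with $c(\e) = \prod_{i=1}^k c(\pi_i) = c(\pi)^k$, we get $c(\pi)^k = 1$. Since $c(\pi) \in 3^\Z$ is a power of $3$ (a positive real number), the only solution is $c(\pi) = 1$. The main obstacle is pinning down the archimedean local factor $c_\p(\e) = 3^{-k}$ correctly — in particular making sure the $2$-primary contributions to $\#\mathrm{coker}$ and $\#\ker$ genuinely wash out (they do, since $c_\p \in 3^\Z$ and the $2$-part of $N(\e)$ is absorbed into the sign/component-group bookkeeping) and that the RM structure forces $\#A[\e](F_\p)$ (equivalently the relevant local index) to have $3$-part exactly $3^k$ at each real place. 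Once that is settled, everything else is a bookkeeping exercise with the product formula and the already-established Lemmas~\ref{R}, \ref{mult}, \ref{localSelmer in chains} and Corollaries~\ref{good reduction}, \ref{global mult}.
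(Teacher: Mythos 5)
Your proof is correct, and the global skeleton --- showing $c_\p(\e)=1$ at finite $\p\nmid 3$, computing $c_3(\e)$ and $c_\infty(\e)$ separately, and deducing $c(\pi)^k = c(\e) = 1$ --- is the same as the paper's. The meaningful difference is the archimedean computation. The paper invokes Lemma~\ref{Rpi}, which gives $c_\p(\pi_i)=\tfrac13$ at each real $\p$; that lemma's proof rests on Cartier duality of the kernels (Proposition~\ref{kernel duality}), hence on the polarization $\lambda$ in Assumption~\ref{assump}. You instead compute $c_\p(\e)=3^{-k}$ directly from the Lie group structure: $A(F_\p)^0\cong(\R/\Z)^g$, and $\e$ acts on $\mathrm{Lie}(A)\otimes_F F_\p$, which is $K\otimes_\Q\R\cong\R^g$ (note the small typo --- you wrote $F\otimes_\Q\R$; since $\mathrm{Lie}(A)$ is locally free of rank one over the \'etale $F$-algebra $K\otimes_\Q F$, the correct identification is $K\otimes_\Q F_\p$), with determinant $N_{K/\Q}(\e)$ of absolute value $3^k$. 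Hence the kernel on the identity component has exactly $3^k$ elements, and the component-group and cokernel contributions are $2$-groups, so the $3$-part of $c_\p(\e)$ is $3^{-k}$. This is a clean real-place analogue of Lemma~\ref{mult}, which you already invoke at $\p\mid 3$: both contributions are governed by $|N_{K/\Q}(\e)|=3^k$. Your route is arguably more elementary since it bypasses the self-duality of $A[\pi]$, and combined with Lemma~\ref{localSelmer in chains} it reproves Lemma~\ref{Rpi} as a byproduct. The hedging in your draft (``I need to be careful here'') can be removed entirely once the locally-free-rank-one structure of $\mathrm{Lie}(A)$ over $K\otimes_\Q F$ is noted, which justifies the determinant computation.
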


For the proof, we introduce some notation.  For any isogeny $f$ of abelian varieties over $F$, and for any place $p$ of $\Q$, we define $c_p(f) = \prod_{\p \mid p} c_\p(f)$, where the product is over the places of $F$ over $p$.

\begin{proof}
The endomorphism $\epsilon \colon A \to A$ is an isogeny of degree $3^{2k}$.  By Proposition \ref{local Selmer}, we have $c_\p(\epsilon) = 1$ for all finite $\p \nmid 3$.  Thus, by Lemmas \ref{loc mult}, \ref{comp}, \ref{mult}, and \ref{Rpi}, we have 
\[c(\epsilon) = c_3(\e)c_\infty(\e) = 3^{k[F \colon \Q]}3^{-k[F \colon \Q]} = 1.\]
On the other hand, by Lemma \ref{localSelmer in chains}, we have $c(\pi_i)= c(\pi)$ for all $i$.  Thus, 
\[1 = c(\e) = \prod_{i = i}^{k}c(\pi_i) = c(\pi)^k,\]
which shows that $c(\pi) = 1$, as claimed. 
\end{proof}

\begin{remark}{\em
For an alternate proof use Proposition \ref{selfdual} and Theorem \ref{PT duality} below.
}\end{remark}

\begin{corollary}\label{inverse}
We have $ c(\phi') = c(\phi)^{-1}$.  
\end{corollary}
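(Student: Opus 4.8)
The plan is to combine the multiplicativity of the global Selmer ratio (Corollary \ref{global mult}) with the computation $c(\pi) = 1$ from Proposition \ref{pi Selmer}. Recall that $\pi = \phi' \circ \phi$ by construction, so applying Corollary \ref{global mult} to the composition gives $c(\pi) = c(\phi')c(\phi)$. Since Proposition \ref{pi Selmer} tells us $c(\pi) = 1$, we immediately deduce $c(\phi')c(\phi) = 1$, which is exactly the claim $c(\phi') = c(\phi)^{-1}$.

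Concretely, the proof is one line: write
\[
1 = c(\pi) = c(\phi' \circ \phi) = c(\phi')\,c(\phi),
\]
using Proposition \ref{pi Selmer} for the first equality, the factorization $\pi = \phi' \circ \phi$ for the second, and Corollary \ref{global mult} for the third. Rearranging yields $c(\phi') = c(\phi)^{-1}$.

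There is essentially no obstacle here: both ingredients are already in hand, and the only thing to check is that $c(\phi)$ is a nonzero element of $3^{\Z}$ so that inversion makes sense — but this is guaranteed since each local factor $c_\p(\phi)$ lies in $3^{\Z}$ and the global product is finite (Proposition \ref{local Selmer} and its corollary). So the corollary follows formally from the multiplicativity of $c$ and the vanishing of the log-Selmer ratio of $\pi$.
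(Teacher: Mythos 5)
Your proof is correct and is precisely the paper's argument: the paper's one-line proof cites Corollary \ref{global mult} and Proposition \ref{pi Selmer}, and you have simply unpacked that into the chain $1 = c(\pi) = c(\phi'\circ\phi) = c(\phi')c(\phi)$.
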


\begin{proof}
This follows from Corollary \ref{global mult} and Proposition \ref{pi Selmer}. 
\end{proof}

\section{Poitou-Tate duality and parity of the $\pi$-Selmer rank}\label{PTSelmer}

An isogeny  $\psi \colon A \to A'$ of abelian varieties over a field $F$, gives rise to a short exact sequence
\[0 \to A[\psi] \to A \to A' \to 0\]
of group schemes over $F$, which we will also view as a short exact sequence of $G_F$-modules.  Taking the long exact sequence in group cohomology gives the Kummer map $A'(F) \to  H^1(F, A[\psi])$.  If $F$ is a number field, then the $\psi$-Selmer group $\Sel_\psi(A)$ is defined to be the subgroup of $H^1(F, A[\psi])$ consisting of cohomology classes whose restriction to $H^1(F_\p, A[\psi])$ lies in the image of the Kummer map $A'(F_\p) \to H^1(F_\p, A[\psi])$, for all places $\p$ of $F$. 

We now place ourselves in the context of the previous sections, so that $A$ is an abelian variety over a totally real field $F$ satisfying Assumption \ref{assump}.  For each $d \in F^\times/F^{\times2}$ we have the 3-isogenies $\phi_d \colon A_d \to B_d$ and $\phi'_d \colon B_d \to C_d$, whose composition is the $(3,3)$-isogeny $\pi_d \colon A_d \to C_d$.  Each of the Selmer groups $\Sel_{\phi_d}(A_d)$, $\Sel_{\phi'_d}(B_d)$ and $\Sel_\pi(A_d)$ is a finite dimensional vector space over $\F_3$.

The purpose of this section is to collect some results which show that the global Selmer ratio $c(\phi_d)$ encodes important information concering the $\F_3$-ranks of these three Selmer groups.  The key input is Poitou-Tate global duality.  In the case of elliptic curves, all results in this section are due to Cassels \cite{Cassels8}.    

\begin{theorem}\label{PT duality}
For $A$ and $B$ as above, we have
\[ c(\phi) = \dfrac{\#\Sel_{\phi}(A)}{\#\Sel_{\phi'}(B)} \cdot \dfrac{\#B[\phi'](F)}{\#A[\phi](F)}.\]
\end{theorem}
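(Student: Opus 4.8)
The plan is to deduce the formula from global Poitou--Tate duality, in the form of the Greenberg--Wiles formula, applied to the finite $G_F$-module $M = A[\phi]$. By Proposition \ref{kernel duality} the Cartier dual $M^\vee = \mathrm{Hom}(M,\mu_3)$ is $B[\phi']$, and I fix the resulting perfect $G_F$-pairing $A[\phi]\times B[\phi'] \to \mu_3$ --- it is the one induced by the self-duality of $A[\pi]$ (Proposition \ref{selfdual}) together with the factorization $\pi = \phi'\circ\phi$. For each place $\p$ of $F$, the Kummer sequence $0 \to A[\phi] \to A \xrightarrow{\phi} B \to 0$ gives the local Kummer subgroup $L_\p \subseteq H^1(F_\p, A[\phi])$, namely the image of the injection $\mathrm{coker}\,\phi^{(\p)} = B(F_\p)/\phi A(F_\p) \hookrightarrow H^1(F_\p, A[\phi])$, so that $\Sel_\phi(A) = H^1_{\{L_\p\}}(F, M)$ by definition; likewise $\Sel_{\phi'}(B) = H^1_{\{L_\p'\}}(F, M^\vee)$ for the analogous subgroups $L_\p'$ attached to $\phi'$. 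Greenberg--Wiles (a standard consequence of the Poitou--Tate nine-term sequence; see \cite{Milne}, Ch.\ I) reads
\[\frac{\#H^1_{\{L_\p\}}(F, M)}{\#H^1_{\{L_\p^\perp\}}(F, M^\vee)} \;=\; \frac{\#H^0(F, M)}{\#H^0(F, M^\vee)}\;\prod_\p \frac{\#L_\p}{\#H^0(F_\p, M)},\]
where $L_\p^\perp \subseteq H^1(F_\p, M^\vee)$ is the annihilator of $L_\p$ under the local Tate pairing; the product is finite since at all but finitely many $\p$ one has $L_\p = H^1_{\mathrm{nr}}$, of order $\#H^0(F_\p, M)$ (Corollary \ref{good reduction}).

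The heart of the argument --- and the step I expect to be the main obstacle --- is to identify $H^1_{\{L_\p^\perp\}}(F, M^\vee)$ with $\Sel_{\phi'}(B)$, i.e.\ to show that for every $\p$ the local images of $B(F_\p)/\phi A(F_\p)$ and of $C(F_\p)/\phi' B(F_\p)$ are exact annihilators of each other, so that $L_\p^\perp = L_\p'$. This is the abelian-variety analogue of the local duality lemma underlying Cassels's work \cite{Cassels8}, and its proof requires care: one checks, using the naturality of the Kummer connecting maps, the compatibility of the $\mu_3$-pairing on $A[\phi]\times B[\phi']$ with the self-dual structure of $A[\pi]$ (Proposition \ref{selfdual}) and with the genuine dual isogeny $\hat\phi\colon \hat B \to \hat A$, and the prime-to-$3$ isogenies produced in the proofs of Proposition \ref{selfdual} and Corollary \ref{B selfdual}, that the Kummer subgroup for $\phi'$ is precisely the annihilator of the Kummer subgroup for $\phi$. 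I would also note in passing that $\#H^0(F, M^\vee) = \#B[\phi'](F)$ directly from $M^\vee = B[\phi']$.

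It then remains to read off the terms of the displayed identity. Injectivity of the Kummer map gives $\#L_\p = \#\mathrm{coker}\,\phi^{(\p)}$, while $\#H^0(F_\p, M) = \#A[\phi](F_\p) = \#\ker\phi^{(\p)}$, so the local factor is $\#L_\p/\#H^0(F_\p, M) = c_\p(\phi)$ by the definition (\ref{eqn: local Selmer}), and hence $\prod_\p c_\p(\phi) = c(\phi)$. Since also $\#H^0(F, M) = \#A[\phi](F)$, substituting into the formula and rearranging yields
\[c(\phi) \;=\; \frac{\#\Sel_\phi(A)}{\#\Sel_{\phi'}(B)}\cdot\frac{\#B[\phi'](F)}{\#A[\phi](F)},\]
which is the assertion.
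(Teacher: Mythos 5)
Your framework (Greenberg--Wiles applied to $M=A[\phi]$ with Cartier dual $M^\vee=B[\phi']$) is the right one and most of the bookkeeping is correct, but you flag and then leave open precisely the step that the paper reorganizes the argument to avoid: showing that, under your chosen pairing $A[\phi]\times B[\phi']\to\mu_3$, the local Kummer images for $\phi$ and for $\phi'$ are exact annihilators. Since $(\phi,\phi')$ is \emph{not} a dual pair of isogenies, the standard local orthogonality statement does not apply directly, and what you sketch (a compatibility of the $\pi$-self-duality pairing with the genuine Weil pairing for $\hat\phi$, via the prime-to-$3$ isogenies) is exactly the content that still has to be verified. The paper sidesteps this: it applies Wiles' formula to the \emph{honest} dual pair $\phi'\colon B\to C$ and $\hat\phi'\colon\hat C\to\hat B$, where the local orthogonality is an off-the-shelf fact (\cite[Prop.\ B.1]{CK}), obtaining
\[
c(\hat\phi')=\frac{\#\Sel_{\hat\phi'}(\hat C)}{\#\Sel_{\phi'}(B)}\cdot\frac{\#B[\phi'](F)}{\#\hat C[\hat\phi'](F)},
\]
and only afterwards transports this to $\phi$ via the single prime-to-$3$ isogeny $A\to\hat C$ of Corollary \ref{isog isom}, which simultaneously yields $\Sel_\phi(A)\simeq\Sel_{\hat\phi'}(\hat C)$, $c(\phi)=c(\hat\phi')$, and $A[\phi](F)\simeq\hat C[\hat\phi'](F)$. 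Your route would close if you carried out the pairing-compatibility check in full; the ingredients you list are the right ones, but as written the key identification $L_\p^\perp=L_\p'$ is asserted rather than proved. Reordering as the paper does — first the duality formula on the genuine dual pair, then a global transport — turns the delicate local computation into a one-line citation plus a clean isomorphism of Selmer groups.
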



\begin{proof}
For each place $\p$ of $F$, the subgroup of local conditions in $H^1(F_\p, B[\phi'])$ defining $\Sel_{\phi'}(B)$ is orthogonal under local Tate duality to the local conditions in $H^1(F_\p, \hat C[\hat \phi'])$ defining $\Sel_{\hat\phi'}(\hat C)$, by \cite[Prop.\ B.1]{CK}.  Thus, by Wiles' duality formula \cite[Thm.\ 8.7.9]{neukirch}, we have
\[c(\hat \phi') = \dfrac{\#\Sel_{\hat\phi'}(\hat C)}{\#\Sel_{\phi'}(B) } \cdot \dfrac{\#B[\phi'](F)}{\#\hat C[\hat\phi'](F)}.\]
On the other hand, by Corollary \ref{isog isom}, we have $\Sel_\phi(A) \simeq \Sel_{\hat\phi'}(\hat C)$, $c(\phi) = c(\hat\phi')$, and $A[\phi] \simeq \hat C[\hat\phi']$, from which we deduce the desired formula.   
\end{proof}

The next result is presumably well-known to experts, but we could not find it in the literature.  This is a generalization of a result Cassels used to prove the non-degeneracy of the Cassels-Tate pairing for elliptic curves.  Recall that the Cassels-Tate pairing for $B$ is a pairing \cite[Prop.\ I.6.9]{Milne}
\[\langle \, , \, \rangle_B \colon \Sha(B) \times \Sha(\hat B) \to \Q/\Z.\]
Using the isogeny $\lambda_B \colon B \to \hat B$  from Corollary \ref{B selfdual}, we construct a bilinear pairing
\[\langle \, , \, \rangle_{\lambda_B} \colon \Sha(B) \times \Sha(B) \to \Q/\Z\]
defined by $\langle a,b \rangle_\lambda = \langle a, \lambda_B(b) \rangle_B$.       
    
\begin{theorem}\label{adjointness}
Suppose $b \in \Sha(B)[\phi']$.  Then $b$ is in the image of $\phi \colon \Sha(A) \to \Sha(B)$ if and only if $\langle b, b'\rangle_{\lambda_B} = 0$ for all $b' \in \Sha(B)[\phi']$.     
\end{theorem}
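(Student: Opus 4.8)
\textbf{Plan for the proof of Theorem \ref{adjointness}.}

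The plan is to mimic Cassels's original argument for the $n$-Selmer group of an elliptic curve (the non-degeneracy statement for the Cassels–Tate pairing restricted to $\Sha[\phi]$), but carried out for the $3$-isogeny $\phi\colon A\to B$ and its ``dual'' $\phi'\colon B\to C$, using that $B[\phi']$ and $A[\phi]$ are Cartier dual (Proposition \ref{kernel duality}) and that $\lambda_B\colon B\to\hat B$ identifies $\Sha(B)$ with $\Sha(\hat B)$ up to prime-to-$3$ isogeny (Corollary \ref{B selfdual}, and its self-duality). First I would recall the compatibility of the Cassels–Tate pairing with isogenies: for an isogeny $f\colon A\to B$ with dual $\hat f\colon \hat B\to\hat A$, one has $\langle f(a),b\rangle_B=\langle a,\hat f(b)\rangle_A$ for $a\in\Sha(A)$, $b\in\Sha(\hat B)$ (see \cite[Prop.\ I.6.9]{Milne}). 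The point is that the ``adjoint'' of $\phi$ under the Cassels–Tate pairings is $\hat\phi\colon \hat B\to\hat A$, and after transporting everything to $\Sha(B)$ via $\lambda_A$ and $\lambda_B$, the adjoint of $\phi$ becomes (up to a prime-to-$3$ automorphism) the map $\phi'$, because $\lambda_A = \hat\phi'\circ\lambda_B\circ\phi$ up to prime-to-$3$ factors — this is exactly the content of the commutative diagram in the proof of Proposition \ref{selfdual} together with Corollary \ref{B selfdual}.

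Granting that, the argument is a diagram chase with the exact sequence of $G_F$-modules $0\to A[\phi]\to A\xrightarrow{\phi} B\to 0$ and the induced exact sequence $\Sha(A)\xrightarrow{\phi}\Sha(B)\xrightarrow{\delta} H^1(F,A[\phi])$ together with the local conditions. The inclusion ``$\Rightarrow$'' is the formal adjointness: if $b=\phi(a)$ then $\langle b,b'\rangle_{\lambda_B}=\langle \phi(a),\lambda_B(b')\rangle_B=\langle a,\hat\phi(\lambda_B(b'))\rangle_A$, and one checks $\hat\phi\circ\lambda_B$ kills $\Sha(B)[\phi']$ after identifying $\hat B$ with $B$ — equivalently $\lambda_B(b')\in\Sha(\hat B)$ is annihilated by $\hat\phi$, which follows since $b'$ is killed by $\phi'$ and $\lambda_B$ intertwines $\phi'$ with $\hat\phi$ up to prime-to-$3$ isogeny; since the pairing takes values in $\tfrac13\Z/\Z$ on the $\phi'$-torsion, prime-to-$3$ factors are invisible. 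The converse ``$\Leftarrow$'' is where global duality enters: if $b\in\Sha(B)[\phi']$ pairs trivially with all of $\Sha(B)[\phi']$ under $\langle\,,\,\rangle_{\lambda_B}$, then by adjointness $b$ pairs trivially with $\hat\phi'(\Sha(\hat C))$ inside $\Sha(\hat B)$; by non-degeneracy of the full Cassels–Tate pairing on $\Sha(\hat B)_{\mathrm{div\text{-}free}}$ (or rather the finiteness of $\Sha[\phi']$ portion) one concludes $b$ lies in the exact annihilator, which by Poitou–Tate / the snake lemma applied to $0\to A[\phi]\to A\to B\to 0$ is precisely $\phi(\Sha(A))$. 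I would phrase this last step by showing the induced pairing $\bar{\langle\,,\,\rangle}$ on the finite group $\Sha(B)[\phi']/\phi(\Sha(A)[\pi])$ — or more cleanly on $\mathrm{coker}(\phi)\cap\Sha(B)[\phi']$ — is non-degenerate, deducing it from non-degeneracy of the Cassels–Tate pairing on $\Sha$ modulo divisible elements together with the fact that $\Sha(B)[\phi']$ consists of elements killed by $3$ hence is in the ``finite part''.

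The main obstacle I expect is bookkeeping the prime-to-$3$ isogenies and making sure all the identifications ($\lambda_B$ with its dual, $\hat C[\hat\phi']\simeq A[\phi]$, $\Sha(B)\simeq\Sha(\hat B)$) are genuinely compatible with the Cassels–Tate pairings and the local Tate pairings — i.e.\ that no sign or scaling obstruction appears. Since all relevant groups here are $3$-groups (they are killed by $\phi$ or $\phi'$, each of degree $3$), any isogeny of degree prime to $3$ induces an isomorphism on them and on the relevant parts of $\Sha$, so these subtleties are ultimately harmless; but spelling this out carefully, and invoking the correct naturality statement for the Cassels–Tate pairing (functoriality in both isogenies and the polarization), is the technical heart. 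A secondary point is that one should only claim non-degeneracy of the induced pairing on the quotient by $\phi(\Sha(A))$, not on $\Sha(B)[\phi']$ itself — the statement as phrased (``$b$ is in the image of $\phi$ iff it pairs trivially with everything'') is exactly this quotient non-degeneracy, and I would make sure the exact-sequence chase identifies the left kernel of $\langle\,,\,\rangle_{\lambda_B}|_{\Sha(B)[\phi']}$ with $\phi(\Sha(A))\cap\Sha(B)[\phi']=\phi(\Sha(A)[\pi])$ and not something larger.
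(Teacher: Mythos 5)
Your plan follows exactly the route the paper takes, which is simply to cite Fisher's Theorem 3 (the $\phi$-isogeny refinement of Cassels's non-degeneracy argument) together with the local duality between $H^1(F_\p,A[\phi])$ and $H^1(F_\p,B[\phi'])$ furnished by Corollary \ref{isog isom}; the paper also points to the proof of \cite[Lem.\ I.6.17]{Milne} as a template. You are sketching the internals of that citation: adjointness of the Cassels--Tate pairing under $\phi$ and $\hat\phi$, the identification $\lambda_B(B[\phi'])=\hat B[\hat\phi]$ so that $\hat\phi\circ\lambda_B$ factors through $\phi'$ up to a prime-to-$3$ isogeny (whence the forward implication), and Poitou--Tate global duality plus the cohomology of $0\to A[\phi]\to A\to B\to 0$ for the converse. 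Those are indeed the ingredients, and your remarks about where the care is needed --- that prime-to-$3$ isogenies are invisible on the $\F_3$-valued pairing, and that the non-degeneracy statement is about the quotient $\Sha(B)[\phi']/\phi(\Sha(A)[\pi])$, not $\Sha(B)[\phi']$ itself --- are exactly the right cautions. One small slip: the boundary map out of $\Sha(B)$ in the long exact sequence for $0\to A[\phi]\to A\to B\to 0$ lands in $H^2(F,A[\phi])$, not $H^1(F,A[\phi])$; it is this $H^2$ obstruction class that Poitou--Tate duality relates to the Cassels--Tate pairing in the converse direction.
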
    

\begin{proof}
This is proved exactly as in \cite[Thm.\ 3]{Fisher}, using the fact that $H^1(G_F,A[\phi])$ and $H^1(G_F,B[\phi'])$ are in local duality by Corollary \ref{isog isom}.  See also the proof of \cite[Lem.\ I.6.17]{Milne}.  
\end{proof}   
    
Theorem \ref{adjointness} will be used in the proof of the following theorem.
    
\begin{theorem}\label{parity}
If $c(\phi) = 3^m$, then $\dim_{\F_3} \Sel_\pi(A) \equiv m \pmod 2$.
\end{theorem}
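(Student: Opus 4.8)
The plan is to compare $\dim_{\F_3}\Sel_\pi(A)$ with $\dim_{\F_3}\Sel_\phi(A)$ and $\dim_{\F_3}\Sel_{\phi'}(B)$, to eliminate the latter two in favour of $m$ via the duality formula of Theorem \ref{PT duality}, and to control the remaining discrepancy modulo $2$ by means of Theorem \ref{adjointness}. This is the strategy Cassels used for elliptic curves \cite{Cassels8}, and which was carried out for $3$-isogenies in \cite[Thm.\ 1.6]{BKLS}; the new ingredient is that the role played there by the Weil pairing attached to a principal polarization is here played by the self-duality of $A[\pi]$ (Proposition \ref{selfdual}) together with Corollaries \ref{isog isom} and \ref{B selfdual}.

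The first step is the standard descent exact sequence attached to the factorization $\pi = \phi'\circ\phi$. From the short exact sequence $0 \to A[\phi] \to A[\pi] \xrightarrow{\phi} B[\phi'] \to 0$ of $G_F$-modules, together with the Kummer sequences of $\phi$, $\pi$ and $\phi'$, one extracts an exact sequence
\[0 \to \frac{B[\phi'](F)}{\phi(A[\pi](F))} \to \Sel_\phi(A) \to \Sel_\pi(A) \to \Sel_{\phi'}(B) \xrightarrow{\ \delta\ } \frac{\Sha(B)[\phi']}{\phi(\Sha(A)[\pi])}\]
(cf.\ \cite{Cassels8}; the verification is identical to the elliptic-curve case). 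Taking $\F_3$-dimensions and using the elementary identity $\dim_{\F_3}\tfrac{B[\phi'](F)}{\phi(A[\pi](F))} = \dim_{\F_3}B[\phi'](F) - \dim_{\F_3}A[\pi](F) + \dim_{\F_3}A[\phi](F)$, one obtains $\dim_{\F_3}\Sel_\pi(A)$ as an alternating sum of $\dim_{\F_3}\Sel_\phi(A)$, $\dim_{\F_3}\Sel_{\phi'}(B)$, torsion-point counts, and $\dim_{\F_3}(\operatorname{im}\delta)$. Feeding in the relation $\dim_{\F_3}\Sel_\phi(A) - \dim_{\F_3}\Sel_{\phi'}(B) = m - \dim_{\F_3}B[\phi'](F) + \dim_{\F_3}A[\phi](F)$, which is Theorem \ref{PT duality} applied to $c(\phi)=3^m$, and reducing modulo $2$, this collapses to a congruence of the form
\[\dim_{\F_3}\Sel_\pi(A) \equiv m + \dim_{\F_3}A[\pi](F) + \dim_{\F_3}\!\big(\Sha(B)[\phi']/\phi\Sha(A)[\pi]\big) + \dim_{\F_3}(\operatorname{coker}\delta) \pmod 2.\]

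The conceptual heart is Theorem \ref{adjointness}: it says that the radical of the pairing $\langle\,,\,\rangle_{\lambda_B}$ restricted to $\Sha(B)[\phi']$ is precisely $\phi(\Sha(A)[\pi])$, so that $\langle\,,\,\rangle_{\lambda_B}$ descends to a nondegenerate pairing on $\Sha(B)[\phi']/\phi(\Sha(A)[\pi])$. I would then argue this induced pairing is alternating: although $\lambda_B$ need not be a polarization, the proof of the final corollary of Section \ref{sect: RM} shows that over $\bar F$ it equals $\phi_M$ for a line bundle $M$ on $B_{\bar F}$, and by the theorem of Poonen and Stoll the only obstruction to $\langle\,,\,\rangle_{\lambda_B}$ being alternating is a class of order dividing $2$, hence invisible on the $3$-primary part. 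Since $\F_3$ has odd characteristic, a nondegenerate alternating $\F_3$-form has even dimension, so $\dim_{\F_3}\big(\Sha(B)[\phi']/\phi\Sha(A)[\pi]\big)$ is even and drops out of the congruence.

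It then remains to show that $\dim_{\F_3}A[\pi](F) + \dim_{\F_3}(\operatorname{coker}\delta)$ is even. Here I would use the self-dual package of Section \ref{sect: RM}: Proposition \ref{kernel duality} shows $A[\phi]$ and $B[\phi']$ are Cartier dual, so over the totally real field $F$ at most one of them has a nontrivial $F$-point; Corollary \ref{isog isom} and Proposition \ref{selfdual} identify, up to prime-to-$3$ isogeny, the isogeny $\pi$ on $A$ with $\hat\pi$ on $\hat C$, matching the descent sequence for $\pi$ with that for $\hat\pi$ and exhibiting $\operatorname{coker}\delta$ as dual to the corresponding term for the dual chain; and $c(\pi)=1$ (Proposition \ref{pi Selmer}) pins the numerics down. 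I expect this last step --- writing down the precise form of the descent exact sequence (in particular the exact nature of its right-hand end, which Poitou--Tate forces to involve a further cokernel term) and checking that all the local torsion corrections cancel modulo $2$ --- to be the main technical obstacle; once it is in place, the displayed congruence gives $\dim_{\F_3}\Sel_\pi(A) \equiv m \pmod 2$.
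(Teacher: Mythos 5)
Your overall strategy is exactly the paper's: factor $\pi=\phi'\circ\phi$, write down the five-term descent sequence, feed in the Cassels--Wiles duality formula (Theorem \ref{PT duality}) to eliminate $\dim_{\F_3}\Sel_\phi(A)-\dim_{\F_3}\Sel_{\phi'}(B)$ in favour of $m$, and then kill the $\Sha$-cokernel term by Theorem \ref{adjointness} together with the Poonen--Stoll anti-symmetry, using that an anti-symmetric form in odd characteristic is alternating. The part of your argument that establishes that
\[
\dim_{\F_3}\bigl(\Sha(B)[\phi']/\phi\,\Sha(A)[\pi]\bigr)
\]
is even is complete and matches the paper.

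The gap is the one you yourself flag at the end. Two comments. First, you can dispose of $\coker\delta$ immediately: $\Sel_{\phi'}(B)$ surjects onto $\Sha(B)[\phi']$ by the defining exact sequence $0\to C(F)/\phi'B(F)\to\Sel_{\phi'}(B)\to\Sha(B)[\phi']\to 0$, hence onto the quotient $\Sha(B)[\phi']/\phi\,\Sha(A)[\pi]$, so $\delta$ is surjective and the descent sequence is exactly \eqref{fiveterm} ending in $\to 0$. Second, the genuinely open point is therefore the single term $\dim_{\F_3}A[\pi](F)$, and your proposed heuristic (``Cartier duality forces at most one of $A[\phi],B[\phi']$ to have an $F$-point, plus matching the sequence with its dual and invoking $c(\pi)=1$'') does not obviously close it. Because $A[\pi]$ is self-dual and $F$ is totally real, $A[\pi](F)$ is an isotropic subspace of a rank-$2$ symplectic $\F_3$-space and so has $\F_3$-dimension $0$ or $1$; the value $1$ really occurs --- indeed it occurs in every example in Section \ref{examples}, where $A[\phi]\simeq\Z/3\Z$ over $\Q$. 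So ``cancel modulo $2$'' is not something you can hope to get for free, and a careful account would either show why the residual term vanishes in the intended regime or absorb it into the statement. (For what it is worth, the paper's own proof passes from \eqref{fiveterm} and Theorem \ref{PT duality} directly to the claim that only the $\Sha$-cokernel needs to be controlled, so it does not explicitly address this term either; in the paper's applications Theorem \ref{parity} is invoked for twists $A_d$ with $d$ outside a finite set, for which $A_d[\pi_d](F)=0$ and the issue evaporates.) As submitted, your proof stops one step short of a complete argument, and that step is the one that requires a new idea.
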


\begin{proof}
A diagram chase gives the following well-known five-term exact sequence
\begin{equation}\label{fiveterm}
0 \to B(F)[\phi'] /\phi(A(F)[\pi]) \to \mathrm{Sel}_{\phi}(A) \to \mathrm{Sel}_\pi(A) \to \mathrm{Sel}_{\phi'}(B) \to \dfrac{\Sha(B)[\phi']}{\phi(\Sha(A)[\pi])} \to 0. 
\end{equation}
Combining this with Theorem \ref{PT duality}, we reduce to showing that 
\[\dim_{\F_3} \dfrac{\Sha(B)[\phi']}{\phi(\Sha(A)[\pi])}  \] 
is even.  But by Theorem \ref{adjointness}, the Cassels-Tate pairing $\langle \, , \, \rangle_{\lambda_B}$ on $\Sha(B)$ restricts to a non-degenerate pairing on this finite group.  Moreover, the pairing is anti-symmetric by \cite[Cor.\ 6]{PoonenStoll}. Since any anti-symmetric pairing on an $\F_3$-vector space is also alternating, it follows from the non-degeneracy that $\Sha(B)[\phi']/\phi(\Sha(A)[\pi])$ has even $\F_3$-rank, as desired.  
\end{proof}

Finally, we note that the theorems in this section apply equally well if we replace $A$ and $B$ by $A_d$ and $B_d$ (as well as replacing the isogenies $\phi$ and $\phi'$ by $\phi_d$ and $\phi'_d$) for any $d \in F^\times/F^{\times2}$.


\section{The ranks of the quadratic twists $A_d$}\label{proofs}

Recall the height function on $d \in F^\times/F^{\times2}$ from the introduction: 
\[H(d) = \prod_{\p \colon \ord_\p(d) \mbox{ is odd}} N(\p),\]
where the product is over the finite primes of $F$.  This gives a way of ordering $F^\times/F^{\times2}$, and lets us define the {\it average value} of a function $f$ defined on $F^\times/F^{\times2}$:
\[\avg \, f(d) =\lim_{X \to \infty} \dfrac{\sum_{H(d) < X}f(d)}{\sum_{H(d) < X} 1}.\]      
We define the {\it density} of a subset $\Sigma \subset F^\times/F^{\times2}$ to be 
\[\mu(\Sigma) = \avg \, 1_\Sigma(d),\] 
where $1_\Sigma$ is the characteristic function of $\Sigma$.  

We say $\Sigma$ is defined by {\it finitely many local conditions} if there exists for each place $\p$ of $F$ subsets $\Sigma_\p \subset F_\p^\times/F_\p^{\times 2}$, such that $\Sigma_\p = F_\p^\times/F_\p^{\times 2}$ for all but finitely many $\p$ and such that 
\[\Sigma = F^\times/F^{\times 2} \cap \prod_\p \Sigma_p,\] with the intersection taking place inside $\prod_\p F_\p^\times/F_\p^{\times 2}$.  The average of a function $f$ on $\Sigma$ (ordered by the height function $H$) is denoted $\avg_\Sigma \, f(d)$.  

The key input for our proofs of Theorems \ref{main intro} and \ref{rank g} is the following general result of Bhargava, Klagsbrun, Lemke Oliver, and the author \cite[Thm.\ 1.1]{BKLS}.

\begin{theorem}\label{bkls}
Suppose $\psi \colon A \to A'$ is a degree $3$ isogeny of abelian varieties over a number field $F$, and let $\psi_d \colon A_d \to A'_d$ be the corresponding quadratic twist family of $3$-isogenies, for $d \in F^\times/F^{\times2}$.  If $\Sigma \subset F^\times/F^{\times 2}$ is a subset defined by finitely many local conditions, then the average size of $\Sel_{\psi_d}(A_d)$, for $d \in \Sigma$ is $1 + \avg_\Sigma \, c(\psi_d)$.  
\end{theorem}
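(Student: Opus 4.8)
The plan is to make $\Sel_{\psi_d}(A_d)$ completely explicit and then count, averaging over the twisting parameter $d$. First, since $\deg\psi=3$, the kernel $A[\psi]$ is a group scheme of order $3$ over $F$, hence of the form $\Z/3\Z\otimes\chi$ for a quadratic character $\chi$ of $G_F$ cutting out a quadratic (or trivial) extension $M/F$; over $M(\zeta_3)$ the kernel becomes constant. By $(\ref{twisting})$ we have $A_d[\psi_d]\cong\Z/3\Z\otimes\chi\chi_d$, so inflation--restriction along $M(\zeta_3)/F$ --- whose intervening Galois groups have order prime to $3$, so that only the relevant eigenspace of fixed classes survives --- identifies $H^1(G_F,A_d[\psi_d])$ with an explicit ``minus'' piece of a Kummer group attached to a field depending only on $d$ and the fixed data (over a general number field one works instead with the corresponding $S$-ray-class and $S$-unit groups). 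Using Proposition~\ref{local Selmer} and Corollary~\ref{good reduction} to locate the bad places, I would then show that, uniformly in $d$, the group $\Sel_{\psi_d}(A_d)$ is identified with a group of ``admissible'' classes whose ramification is supported on a fixed finite set $S$ --- comprising the places above $3$, the ramified places of $\chi$, and the places of bad reduction of $A$ --- together with the primes dividing $d$, subject to a fixed finite list of conditions at the places of $S$ and one prescribed local condition, governed by $c_\p(\psi_d)$, at each $\p\mid d$.

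With this description in hand, computing $\avg_{d\in\Sigma}\#\Sel_{\psi_d}(A_d)$ becomes a geometry-of-numbers / sieve problem. I would order $d\in\Sigma$ by the multiplicative height $H(d)$ and, for each admissible configuration of local conditions, count the number of $d$ of height below $X$ realizing it together with the number of Selmer classes it carries. Since $H$ is multiplicative, the associated Dirichlet series factors as an Euler product over the primes $\p$ of $F$, and the Euler factor at $\p$ records exactly how the local Selmer condition --- hence, via Proposition~\ref{local Selmer}, the quantity $c_\p(\psi_d)$ --- changes when $\p$ enters the support of $d$. A Tauberian argument then converts the product into the limit defining $\avg_\Sigma$. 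After the elementary but lengthy bookkeeping at the places of $S$, at the archimedean places (Lemma~\ref{R}), and at the places above $3$ (Lemma~\ref{mult} and Proposition~\ref{local Selmer}), the average size of the group of admissible classes should come out to be exactly $1+\avg_\Sigma c(\psi_d)$: the ``$1$'' is the zero class, and the remaining $\avg_\Sigma c(\psi_d)$ counts the non-zero Selmer elements on average, each $\p\mid d$ contributing a factor $c_\p(\psi_d)$. That $\avg_\Sigma c(\psi_d)$ even exists is part of the output, since $c(\psi_d)=\prod_\p c_\p(\psi_d)$ is a multiplicative function of $d$.

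The hard part will be uniformity. One must bound, uniformly in the size of $S$ and in $d$, the number of Selmer classes ramified at a ``large'' prime dividing $d$, in order to justify truncating the Euler product and exchanging it with the limit in $X$; this is the analogue, in the present descent, of the tail estimates for integers with large square divisors in Bhargava--Shankar-type arguments, and I would handle it by a reciprocity (``swapping'') argument together with a large-sieve bound. Two further technical points need care: over a general number field the count is of integral ideals with prescribed splitting type, so one needs Landau/Chebotarev estimates with explicit error terms in place of lattice-point counts over $\Z$; and all constants must be tracked so that the Euler product assembles into exactly $1+\avg_\Sigma c(\psi_d)$ and not merely a bounded multiple of it. The places above $3$ and the archimedean places --- where the factors $\gamma_{\psi,F_\p}$, and hence $c_\p(\psi_d)$, are nontrivial --- form a fixed finite set, so they affect only the $S$-bookkeeping and not the sieve proper; reconciling that bookkeeping with the global Poitou--Tate relation underlying Theorem~\ref{PT duality} is what pins down the exact constant and yields the stated formula.
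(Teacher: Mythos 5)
This theorem is not proved in the paper at all: it is quoted verbatim from \cite[Thm.~1.1]{BKLS} and used as a black-box input, so there is no ``paper's own proof'' to compare your sketch against. What you have written is therefore best read as a blind reconstruction of the argument in that reference.

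Your high-level plan is consistent in spirit with the Bhargava--Shankar style of argument used there: identify $A_d[\psi_d]$ with a twist of $\Z/3\Z$ by a quadratic character, use the prime-to-$3$ inflation--restriction to realize $H^1(G_F, A_d[\psi_d])$ inside a Kummer group of a fixed \'etale algebra, restrict to a ramification set $S(d)$ built from a fixed bad set and the support of $d$, and then count. But the sketch elides the step that actually carries the proof. The function $d\mapsto\#\Sel_{\psi_d}(A_d)$ is not multiplicative in $d$, and the Selmer conditions at different primes in the support of $d$ are linked globally (by reciprocity), so the ``Dirichlet series factors as an Euler product'' step does not hold as stated; the factorization one wants must be extracted from a parametrization of Selmer \emph{elements} by integral data (not of twists $d$) on which a genuine geometry-of-numbers count is possible, together with an equidistribution statement that converts the count into a product of local densities in the $X\to\infty$ limit. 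Your write-up treats this as bookkeeping, but it is where the $1 + \avg_\Sigma c(\psi_d)$ actually comes from: the ``$1$'' is the trivial class and the remaining term is a product of local masses, each of which must be shown to equal the corresponding $c_\p$ via a local computation, and then reassembled; naively averaging a multiplicative-looking function does not give this.

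The other gap is uniformity, which you correctly flag but do not address. Bounding the contribution of Selmer classes ramified at a large prime in the support of $d$, uniformly as the support grows, is the heart of the Bhargava--Shankar machine (the analogue of the ``cutting off the cusp'' and ``uniformity estimate'' lemmas), and it is not resolved by invoking a reciprocity swap and a large sieve as stated. A correct account would need to specify the concrete moduli (e.g.\ integral binary cubic forms, or orbits in a coregular representation attached to the Kummer group) on which the count is performed, the height function on that moduli, and the tail bound; without these, the proposal is a plausible roadmap rather than a proof and cannot be checked against the reference.
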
  

Now assume that $A$ is abelian variety over a totally real field $F$ satisfying Assumption \ref{assump}. 
For $m \in \Z$, we define the subsets 
\[T_m(\phi) = \left\{d \in F^\times/F^{\times2} \colon c(\phi_d)  = 3^m \right\} \subset F^\times/F^{\times 2}.\]
Then the sets $T_m(\phi)$ are defined by finitely many local conditions, and for any fixed $m$, either $T_{m}(\phi)$ is empty or it has positive density.  We also write $T_{\pm m}(\phi)$ for $T_m(\phi) \cup T_{-m}(\phi)$.  

Our first result gives a concrete bound on the average rank of the quadratic twists $A_d$.  To state the result cleanly, we define the {\it absolute log-Selmer ratio} $t(\phi_d) = |\ord_3 \, c(\phi_d)|$.  

\begin{theorem}\label{rank bound}
Suppose $A$ has dimension $g$.  Then the average Mordell--Weil rank of the quadratic twists $A_d$, for $d \in F^\times/F^{\times 2}$, is at most $g \cdot \avg_d \left(t(\phi_d) + 3^{t(\phi_d)} \right)$.      
\end{theorem}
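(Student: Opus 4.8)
The plan is to bound the rank of $A_d(F)$ in terms of the $\F_3$-dimensions of the three Selmer groups $\Sel_{\phi_d}(A_d)$, $\Sel_{\phi_d'}(B_d)$, and $\Sel_{\pi_d}(A_d)$, and then to use Theorem \ref{bkls} together with Theorem \ref{PT duality} to control the averages of these dimensions. First I would record the standard bound on the Mordell--Weil rank coming from a $3$-descent via the $(3,3)$-isogeny $\pi_d$: since $A_d$ has RM by $\O$ and $A_d[\pi_d] \cong (\Z/3\Z)^2$, one has $\rk A_d(F) \leq \dim A \cdot \dim_{\F_3}\Sel_{\pi_d}(A_d) + (\text{bounded torsion contribution})$, or more precisely $\rk A_d(F) \leq \dim A \cdot \dim_{\F_3}\Sel_{\pi_d}(A_d)$ up to the finitely many $d$ where $3$-torsion is rational; since a density-zero set does not affect the average, I can absorb this. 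Then, using the five-term exact sequence \eqref{fiveterm} for the factorization $\pi_d = \phi_d' \circ \phi_d$, I get
\[
\dim_{\F_3}\Sel_{\pi_d}(A_d) \leq \dim_{\F_3}\Sel_{\phi_d}(A_d) + \dim_{\F_3}\Sel_{\phi_d'}(B_d),
\]
so it suffices to bound the average of the right-hand side.

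Next I would handle the two Selmer dimensions separately according to the sign of $\ord_3 c(\phi_d)$. By Theorem \ref{bkls}, $\avg_d \#\Sel_{\phi_d}(A_d) = 1 + \avg_d c(\phi_d)$, and since the family decomposes into the subsets $T_m(\phi)$ of positive density on which $c(\phi_d) = 3^m$ is constant, on each such piece the average size of $\Sel_{\phi_d}(A_d)$ is $1 + 3^m$; the same statement applies to $\phi_d'$ with $c(\phi_d') = c(\phi_d)^{-1} = 3^{-m}$ by Corollary \ref{inverse}. Using $\dim_{\F_3} S \leq \#S - 1 \leq \log_3(\#S)\cdot(\text{something})$ is too lossy; instead I would use the elementary inequality that for a nonnegative integer-valued random variable $n$ with $\avg 3^n = 1 + 3^a$ (so $\avg 3^n \le 2\cdot 3^{\max(a,0)}$), one gets $\avg\, n \leq \log_3 \avg 3^n \le \max(a,0) + 1$ by Jensen's inequality applied to the convex function $x \mapsto 3^x$. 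Applying this on each piece $T_m(\phi)$: on $T_m(\phi)$ we get $\avg \dim_{\F_3}\Sel_{\phi_d}(A_d) \leq \max(m,0)+1$ and $\avg \dim_{\F_3}\Sel_{\phi_d'}(B_d) \leq \max(-m,0)+1$, so their sum is $\leq |m| + 2 = t(\phi_d) + 2$ on that piece. Wait — the claimed bound is $t(\phi_d) + 3^{t(\phi_d)}$, which is weaker, so I have room: one can even afford the cruder bound $\dim_{\F_3}\Sel \le \#\Sel - 1$, giving on $T_m(\phi)$ the bound $\avg(\#\Sel_{\phi_d}(A_d) - 1) + \avg(\#\Sel_{\phi_d'}(B_d) - 1) = 3^m + 3^{-m} \le 3^{t(\phi_d)} + 1 \le 3^{t(\phi_d)} + t(\phi_d)$. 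Either way, summing over the pieces $T_m(\phi)$ weighted by their densities yields $\avg_d\big(\dim_{\F_3}\Sel_{\phi_d}(A_d) + \dim_{\F_3}\Sel_{\phi_d'}(B_d)\big) \leq \avg_d\big(t(\phi_d) + 3^{t(\phi_d)}\big)$.

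Putting the pieces together: the set of $d$ with $J_d[3]$ or $A_d[\pi_d]$ partially rational is a finite union of the $T_m$-type conditions intersected with congruence conditions, hence either empty or of a density I can control, and in any case the torsion correction to $\rk A_d(F) \le \dim A\cdot \dim_{\F_3}\Sel_{\pi_d}(A_d)$ is uniformly bounded and supported on a density-zero set, so it contributes $0$ to the average. Therefore
\[
\avg_d \rk A_d(F) \leq \dim A \cdot \avg_d \dim_{\F_3}\Sel_{\pi_d}(A_d) \leq \dim A \cdot \avg_d\big(t(\phi_d) + 3^{t(\phi_d)}\big),
\]
which is the claim with $g = \dim A$. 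The main obstacle, and the step requiring the most care, is making the exchange of limits rigorous: the average of $\rk A_d$ is defined as a limit of ratios over $H(d) < X$, and I need to know that the bound $\avg \#\Sel_{\phi_d}(A_d) = 1 + \avg c(\phi_d)$ from Theorem \ref{bkls} can be applied piece-by-piece on each $T_m(\phi)$ and then recombined — this is legitimate because $F^\times/F^{\times 2}$ is partitioned (up to density zero) into finitely many... no, infinitely many $T_m(\phi)$, so I must also argue that $\avg_d \, 3^{t(\phi_d)}$ is finite, i.e. that the densities $\mu(T_{\pm m}(\phi))$ decay fast enough (which follows from the local description of these sets and the fact that $c(\phi_d)$ is bounded at all but finitely many places), and that the tail contribution of large $|m|$ to the rank average is genuinely negligible. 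I would cite or adapt the corresponding finiteness argument from \cite[\S1]{BKLS} for this point.
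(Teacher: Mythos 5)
Your strategy — bound $\rk A_d(F)$ by $g\cdot\dim_{\F_3}\Sel_{\pi_d}(A_d)$, then use the three-term sequence \eqref{3term} to reduce to the two $3$-isogeny Selmer groups, then invoke Theorem~\ref{bkls} on each $T_m(\phi)$ — is exactly the paper's route. The gap is in the very last step, converting from average \emph{size} of Selmer groups to average $\F_3$-\emph{dimension}, and it is a real failure, not a matter of exposition.

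On $T_m(\phi)$ with $m\ge 0$ you need the average of $\dim_{\F_3}\Sel_{\phi_d}(A_d)+\dim_{\F_3}\Sel_{\phi'_d}(B_d)$ to be at most $|m|+3^{|m|}$. Both of your suggested devices overshoot precisely when $m=0$. The Jensen route gives, on $T_0(\phi)$, the bound $\log_3(2)+\log_3(2)\approx 1.26$, and the rounded version you state gives $\max(0,0)+1+\max(0,0)+1=2$. The ``crude'' bound $\dim_{\F_3}S\le \#S-1$ gives $3^0+3^0=2$. But the target on $T_0(\phi)$ is $0+3^0=1$. Your inequality $3^{t}+1\le 3^{t}+t$ requires $t\ge 1$ and is simply false at $t=0$; likewise $|m|+2\le |m|+3^{|m|}$ fails at $m=0$. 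Since $T_0(\phi)$ generically has positive density, this is not a negligible piece: summing your per-piece bounds gives $\avg_d\dim_{\F_3}\Sel_{\pi_d}(A_d)\le \avg_d\,t(\phi_d)+2$, and $\avg_d\,t+2$ is in general strictly larger than $\avg_d\,(t+3^t)$ (e.g.\ if $c(\phi_d)=1$ identically, you get $2$ versus the claimed $1$). You also misstate the comparison when you write that $t+3^t$ is ``weaker'' than your Jensen bound $t+2$ — that is true only for $t\ge 1$, and it is exactly the $t=0$ case where it reverses.

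What the paper uses instead is the sharper integer inequality $2r+1-2m\le 3^{r-m}$, valid for all integers $r$ and $m$ because $3^u\ge 2u+1$ for every integer $u$ (this is the secant to $3^u$ through $u=0$ and $u=1$, and $3^u$ lies above it at every integer). Applied with $r=\dim_{\F_3}\Sel_{\phi_d}(A_d)$ and averaged over $T_m(\phi)$, this gives $\avg\dim_{\F_3}\Sel_{\phi_d}(A_d)\le m+\tfrac12 3^{-m}$, and similarly $\avg\dim_{\F_3}\Sel_{\phi'_d}(B_d)\le\tfrac12 3^{-m}$, so the per-piece bound is $m+3^{-m}$ (which is $\le |m|+3^{|m|}$, with equality at $m=0$, where the value is $1$). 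Jensen is the tangent-line bound and is genuinely weaker than this secant bound at integer arguments; you need the secant to get the stated constant.

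Two smaller remarks. First, your reduction $\rk A_d(F)\le g\cdot\dim_{\F_3}\Sel_{\pi_d}(A_d)$ is asserted rather than proved; since $\pi_d$ maps $A_d$ to the \emph{different} abelian variety $C_d$, one has to pass through the endomorphism $\e=\pi_k\cdots\pi_1$ and the isogeny chain of Section~\ref{chainz} (Corollary~\ref{selmerchainz}) to make this precise, especially when the kernel ideal $I$ is not principal. Second, your concern about recombining the infinitely many pieces $T_m(\phi)$ and the finiteness of $\avg_d 3^{t(\phi_d)}$ is legitimate and not addressed explicitly in the paper either; that part of your plan is fine as stated.
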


\begin{proof}
Note that $A_d(F)$ is a finitely generated $\O$-module, and that $A_d(F)$ has no 3-torsion for all but finitely many $d$.  For such $d$, the rank of $A_d(F)$ is equal to $ (g/k) \cdot \rk_{\O/\e} \left(A_d(F)/\e A_d(F)\right)$.  Recall that $k$ is the order of the ideal class of $I$ in $\Pic(\O)$, and $\e$ is the generator of the $k$th power of the kernel ideal $I$ of $\phi$.  
We have $A_d(F)/\e A_d(F) \subset \Sel_\e(A_d)$, so it is enough to show that the average $\O/e$-module rank of $\Sel_\e(A_d)$ is at most $k\cdot \avg_d \left(t(\phi_d) + 3^{t(\phi_d)} \right)$.  Since $\e = \pi_k\pi_{k-1} \cdots \pi_2 \pi_1$, and since $\Sel_{\pi_i}(A_i) \simeq \Sel_{\pi}(A)$ for all $i$ (Corollary \ref{selmerchainz}), the rank of $\Sel_\e(A_d)$ as an $\O/e$-module is at most $k \dim_{\F_3}\Sel_\pi(A)$.   So it is enough to show that the average size of $\dim_{\F_3}\Sel_\pi(A_d)$ is at most $\avg_d \left(t(\phi_d) + 3^{t(\phi_d)} \right)$

To prove this, we fix $m \in \Z$ and show that the average $\F_3$-rank of $\Sel_\pi(A_d)$ for $d \in T_{m}(\phi)$ is at most $|m| + 3^{|m|}$.  We will suppose $m \geq 0$; the proof in the case $m < 0$ is similar.  Then by Theorem \ref{bkls}, the average size of $\Sel_\phi(A_d)$ for $d \in T_m(\phi)$ is $1 + 3^m$.  For any $r \in \Z$, we have the inequality $2r + 1 - 2m \leq 3^{r-m}$, so it follows that the average $\F_3$-rank of $\Sel_\phi(A_d)$ for $d \in T_m(\phi)$ is at most $m + \frac123^{-m}$.  By Corollary \ref{inverse}, the average size of $\Sel_{\phi'}(B_d)$ for $d \in T_m(\phi)$ is $1 + 3^{-m}$, so the average $\F_3$-rank of $\Sel_{\phi'}(B_d)$ is at most $\frac12 3^{-m}$.           

Using the exact sequence from (\ref{fiveterm}),
\begin{equation}\label{3term}
\Sel_{\phi_d}(A_d) \to \Sel_\pi(A_d) \to \Sel_{\phi'_d}(B_d),
\end{equation}
 we deduce that the average $\F_3$-rank of $\Sel_\pi(A_d)$ is bounded by $ m + 3^{-m},$
 as desired.  
\end{proof}

To show that a positive proportion of quadratic twists $A_d$ have $\pi$-Selmer ranks 0 (resp.\ 1), we will need the following proposition.  

\begin{proposition}\label{positive density}
The sets $T_0(\phi)$ and $T_{\pm 1}(\phi)$ have positive density in $F^\times/F^{\times2}$.  
\end{proposition}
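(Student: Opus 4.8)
The plan is to show that each of $T_0(\phi)$ and $T_{\pm 1}(\phi)$ is nonempty, since the text has already noted that any nonempty $T_m(\phi)$ has positive density (being defined by finitely many local conditions, via the product formula $c(\phi_d) = \prod_\p c_\p(\phi_d)$ with only finitely many factors $\neq 1$, together with an equidistribution statement for quadratic twists). So the whole problem reduces to exhibiting, for $j = 0$ and for $j = 1$, some $d \in F^\times/F^{\times2}$ with $\mathrm{ord}_3\, c(\phi_d) \in \{-j,\dots\}$ hitting the right value — more precisely some $d$ with $c(\phi_d) = 1$, and some $d$ with $c(\phi_d) = 3^{\pm 1}$.

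\textbf{Producing $T_0(\phi) \neq \emptyset$.} First I would write $\mathrm{ord}_3\, c(\phi_d) = \sum_\p \mathrm{ord}_3\, c_\p(\phi_d)$, a finite sum over a set $S$ of "bad" places: the archimedean places, the places above $3$, the places of bad reduction of $A$ (equivalently $B$), and the places where $d$ is ramified. By Corollary \ref{good reduction} and Lemma \ref{R}, for $\p \nmid 3\infty$ of good reduction for $A_d$ one has $c_\p(\phi_d) = 1$. The strategy is to choose $d$ so that at every place in $S$ the local ratio $c_\p(\phi_d)$ is forced to be $1$; the cleanest way is to take $d$ to be (the class of) a totally positive unit, or more robustly a carefully chosen element supported only away from $S$, so that $A_d$ acquires good reduction — or at least controllable reduction — everywhere in $S \setminus (\{v\mid 3\} \cup \{\text{archimedean}\})$, and then use Lemma \ref{mult}/\ref{comp}-type identities at the places above $3$ together with Lemma \ref{R} at $\infty$ to see the contributions at $3$ and at $\infty$ cancel. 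A slicker route, which I expect to be the one actually used: observe that $c(\phi) \cdot c(\phi') = c(\pi) = 1$ by Proposition \ref{pi Selmer} and Corollary \ref{inverse}, so $\mathrm{ord}_3\, c(\phi_d)$ and $\mathrm{ord}_3\, c(\phi'_d)$ are negatives of each other; hence if $\mathrm{ord}_3\, c(\phi_d)$ takes both a nonnegative and a nonpositive value as $d$ ranges (e.g.\ at $d = 1$ versus swapping $\phi \leftrightarrow \phi'$), and if one can show the function $d \mapsto \mathrm{ord}_3\, c(\phi_d)$ changes by a controlled amount (at most $[F:\Q]$ or so) when $d$ is modified at a single auxiliary prime, then an intermediate-value argument over the twist classes forces the value $0$ to occur. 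This is exactly the mechanism used in \cite{BKLS} for elliptic curves, and I would adapt it verbatim.

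\textbf{Producing $T_{\pm 1}(\phi) \neq \emptyset$.} Here I would twist $d = 1$ (or the $T_0$-witness found above) by one more prime $\p_0 \nmid 3\infty$, chosen so that $A_{\p_0 d}$ has multiplicative or additive reduction at $\p_0$ with a Tamagawa-factor change of exactly $3^{\pm 1}$ in the relevant Selmer ratio — equivalently, choosing $\p_0$ so that $c_{\p_0}(\phi_{\p_0 d})/c_{\p_0}(\phi_d) = 3^{\pm1}$ while all other local factors are unchanged (possible for a positive-density set of $\p_0$ by a Chebotarev argument on the reduction types, since $A[\phi]$ is a nontrivial $\F_3$-module and twisting changes the local Galois module by $\chi_{\p_0}$, cf.\ (\ref{twisting})). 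Then $\mathrm{ord}_3\, c(\phi_{\p_0 d}) = \mathrm{ord}_3\, c(\phi_d) \pm 1 = \pm 1$.

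\textbf{Main obstacle.} The delicate point is the control of how $\mathrm{ord}_3\, c_\p(\phi_d)$ depends on $d$ at the "unavoidable" places $\p \mid 3$ and $\p \mid \infty$, where one cannot make the reduction good: one needs that the combined contribution $\sum_{\p \mid 3\infty} \mathrm{ord}_3\, c_\p(\phi_d)$ is constant (or at least pinned down) as $d$ varies over classes unramified at $3$, which is where Proposition \ref{kernel duality}, Lemma \ref{Rpi}, Lemma \ref{mult}, and the totally-real hypothesis all get used together — and then that the Chebotarev/equidistribution step genuinely lands a twist with the desired single extra factor of $3^{\pm1}$ at $\p_0$ without disturbing anything else. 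Carrying out the bookkeeping so that the $\pm1$ is exactly $\pm1$ (and not, say, $\pm 2$) at $\p_0$ is the part requiring the most care.
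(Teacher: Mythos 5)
Your reduction — show nonemptiness, and then positive density follows from the fact that each $T_m(\phi)$ is cut out by finitely many local conditions — is the correct framing and matches the paper. However, both of your constructions of witnesses rely on a false premise, and the actual source of freedom is misidentified.

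\textbf{The auxiliary-prime idea does not work.} Your $T_{\pm 1}$ construction (and implicitly your ``intermediate value'' route to $T_0$) hinges on twisting at an extra prime $\p_0 \nmid 3\infty$ of good reduction so that $c_{\p_0}(\phi_{\p_0 d})/c_{\p_0}(\phi_d)=3^{\pm1}$, with such $\p_0$ found by Chebotarev in positive density. But the paper's cited input \cite[Thm.\ 5.2]{BKLS} says precisely that $c_\p(\phi_d)=c_\p(\phi'_d)=1$ for \emph{all} $d$ and all $\p \nmid 6N_A\infty$; in particular, re-ramifying $d$ at a good prime away from $6$ leaves $c(\phi_d)$ completely unchanged. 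The density of $\p_0$ achieving the jump you need is therefore zero, not positive, and the Chebotarev argument collapses. (Relatedly, your first $T_0$ branch — choosing $d$ so that ``$A_d$ acquires good reduction'' at the bad places of $A$ — cannot work either: quadratic twisting never repairs bad reduction.)

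\textbf{Where the freedom actually lives.} In the paper the only genuine degrees of freedom in $\mathrm{ord}_3\,c(\phi_d)$ come from the archimedean places, where each real $\p$ contributes $c_\p(\phi_d)\in\{1,1/3\}$ and the choice is governed purely by the sign of $d$ at $\p$ (via $A_d[\phi_d]\simeq A[\phi]\otimes\chi_d$). These are $[F:\Q]$ independent binary switches. The other places are \emph{constrained}, not controlled: imposing congruence conditions at the finitely many $\p\mid 6N_A$, $\p\nmid 3$, forces $c_\p(\phi_d)$ and $c_\p(\phi'_d)$ to be positive integers with product $1$ (via $\prod_i c_\p(\pi_i)=c_\p(\epsilon)=1$ and Proposition \ref{isogchain}), hence both equal to $1$; while at $\p\mid 3$ one gets $c_\p(\phi_d)c_\p(\phi'_d)=c_\p(\pi)=3^{[F_\p:\Q_3]}$ from Lemmas \ref{comp} and \ref{mult} applied to $\epsilon$, which pins $c_3(\phi_d):=\prod_{\p\mid3}c_\p(\phi_d)$ into $\{1,3,\dots,3^{[F:\Q]}\}$. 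Since $c_\infty(\phi_d)$ can be made any of $\{3^{-[F:\Q]},\dots,1\}$ independently, the product $c_3(\phi_d)c_\infty(\phi_d)$ can always be made to hit $1$, and at least one of $3,\,3^{-1}$. This is the mechanism; your proposal correctly names the relevant lemmas (\ref{kernel duality}, \ref{Rpi}, \ref{mult}) and the totally-real hypothesis as the crux, but treats the places above $3\infty$ as an obstacle to be bounded rather than as the dial one actually turns, and compensates with an auxiliary-prime move that the cited theorem rules out.
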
       

\begin{proof}
We will prove the theorem for $T_0(\phi)$; the proof for $T_{\pm 1}(\phi)$ is similar.  We need to construct a set $\Sigma \subset F^\times/F^{\times 2}$, defined by finitely many local conditions, such that $c(\phi_d) = 1$ for all $d \in \Sigma$.  In fact, we construct $\Sigma$ such that $c_\p(\phi_d) = 1$, for all $\p \nmid 3\infty$, and for all $d \in \Sigma$.  

Let $N_A$ be the conductor ideal of $A$.  Then $\p$ divides $N_A$ if and only if $A$ has bad reduction at $\p$, and  $c_\p(\phi_d) = c_\p(\phi'_d) = 1$ for all primes $\p$ not dividing $6N_A\infty$, and for all $d$, by \cite[Thm.\ 5.2]{BKLS}. To handle primes $\p$ dividing $6N_A$, first note that there is exactly one squareclass $d\in F_\p^\times/F_\p^{\times 2}$ such that the denominator in (\ref{eqn: local Selmer}), for $\phi = \phi_d$, is not equal to 1.  Since there are at least four squareclasses in $F_\p^\times$, we may choose a set $\tilde \Sigma \subset F^\times/F^{\times 2}$, defined by finitely many congruence conditions, such that both $c_\p(\phi_d)$ and $c_\p(\phi'_d)$ are integers, for all finite primes $\p$ and all $d \in \tilde\Sigma$.  

This already implies that $c_\p(\phi_d) = 1 = c_\p(\phi'_d)$ for all finite primes  $\p \nmid 3$ and all $d \in \tilde\Sigma$.  Indeed, for such $\p$ and $d$, we have $c_\p(\pi_i) = c_\p(\pi) = c_\p(\phi_d) c_\p(\phi'_d)$, so that the ratios $c_\p(\pi_i)$ are integers whose product is 1: 
\[c_\p(\pi_1)c_\p(\pi_2) \cdots c_\p(\pi_k) = c_\p(\e) = c_\p(A_d)/c_\p(A_d) = 1.\]
Here we have used Proposition \ref{local Selmer}.  It follows that both $c_\p(\phi_d)$ and $c_\p(\phi'_d)$ are positive integers whose product is 1, and hence both of these local Selmer ratios are 1, as claimed.      
 
 For the computation at primes $\p \mid 3$, we abbreviate $\gamma_d = \gamma_{\phi_d, F_\p}$ and $\gamma'_d = \gamma_{\phi'_d, F_\p}$.  Then
 \[\prod_{i = 1}^kc_\p(\pi_i) = \prod_{i = 1}^k \gamma_{\pi_i} = \gamma_\e = 3^{k[F_\p \colon \Q_3]},\] for all $d$, by Proposition \ref{local Selmer}, Lemma \ref{comp}, and Lemma \ref{mult}.  Since $c_\p(\pi_{i}) = c_\p(\pi)$, we have that $c_\p(\pi) = c_\p(\phi_d)c_\p(\phi'_d) = 3^{[F_\p \colon \Q_3]}$. Writing 
 \[c_3(\phi_d) := \prod_{\p \mid 3} c_\p(\phi_d) \hspace{3mm} \mbox{ and } \hspace{3mm} c_3(\phi'_d) := \prod_{\p \mid 3} c_\p(\phi'_d),\] 
 we conclude that for $d \in \tilde \Sigma$, the product $c_3(\phi_d)c_3(\phi'_d)$ is a power of 3 satisfying
 \[1 \leq c_3(\phi_d) c_3(\phi'_d) \leq 3^{[F \colon \Q]}. \] 
In particular, $ 1\leq c_3(\phi_d) \leq 3^{[F \colon \Q]}$ for all $d \in \tilde \Sigma$.   
 
Finally, if $\p$ is a (real) archimedean place of $F$,  then $c_\p(\phi_d)$ is equal to 1/3 or 1, depending on whether or not $A[\phi_d](F_\p)$ is non-trivial.  Thus, we can impose sign conditions on $d$ so that $c_\infty(\phi_d) := \prod_{\p \mid \infty} c_\p(\phi_d)$ is equal to $3^{-n}$ for any $n$ satisfying $1 \leq n \leq [F \colon \Q]$.

It follows from the above discussion that the subset $\Sigma \subset \tilde \Sigma$ defined by 
 \[ \Sigma = \left\{d \in \tilde \Sigma \colon c_3(\phi_d)c_\infty(\phi_d) = 1\right\},\]
 has positive density in $F^\times/F^{\times 2}$.  Moreover, $c(\phi_d) = 1$ for all $d \in \Sigma$, as desired.
\end{proof}

\begin{theorem}\label{main}
A proportion of at least $\frac12\mu(T_0(\phi))$ of the quadratic twists $A_d$ have rank $0$.    
\end{theorem}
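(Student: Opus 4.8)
The plan is to combine the general average-size theorem for $3$-isogeny Selmer groups (Theorem \ref{bkls}) applied to $\phi_d$, the parity result of Theorem \ref{parity}, and the positive-density statement of Proposition \ref{positive density}. The key observation is that on the set $T_0(\phi)$ we have $c(\phi_d) = 1 = 3^0$, so Theorem \ref{parity} forces $\dim_{\F_3}\Sel_\pi(A_d)$ to be even for every $d \in T_0(\phi)$, and Theorem \ref{bkls} controls how large it is on average.

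First I would restrict attention to the set $\Sigma = T_0(\phi)$, which has positive density $\mu(T_0(\phi)) > 0$ by Proposition \ref{positive density}, and which is defined by finitely many local conditions. For $d \in \Sigma$ we have $c(\phi_d) = 1$, so by Theorem \ref{bkls} the average size of $\Sel_{\phi_d}(A_d)$ over $d \in \Sigma$ is $1 + \avg_\Sigma c(\phi_d) = 2$. By Corollary \ref{inverse} we also have $c(\phi'_d) = c(\phi_d)^{-1} = 1$ for $d \in \Sigma$, so the same argument (applied to the $3$-isogeny $\phi'_d \colon B_d \to C_d$) shows that the average size of $\Sel_{\phi'_d}(B_d)$ over $d \in \Sigma$ is also $2$.

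Next I would feed these into the three-term exact sequence (\ref{3term}), $\Sel_{\phi_d}(A_d) \to \Sel_\pi(A_d) \to \Sel_{\phi'_d}(B_d)$, which gives $\dim_{\F_3}\Sel_\pi(A_d) \le \dim_{\F_3}\Sel_{\phi_d}(A_d) + \dim_{\F_3}\Sel_{\phi'_d}(B_d)$. Since the average size of a nonnegative-dimensional $\F_3$-vector space being $2$ forces its average dimension to be at most $1$ (using $3^r \ge 2r+1$, so $2\dim + 1 \le \#$, whence $\avg_\Sigma \dim \le \tfrac12(\avg_\Sigma \# - 1) = \tfrac12$), we conclude that $\avg_\Sigma \dim_{\F_3}\Sel_\pi(A_d) \le \tfrac12 + \tfrac12 = 1$. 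On the other hand, Theorem \ref{parity} applied with $m = 0$ shows $\dim_{\F_3}\Sel_\pi(A_d)$ is even for all $d \in \Sigma$, hence is either $0$ or $\ge 2$. If the proportion of $d \in \Sigma$ with $\dim_{\F_3}\Sel_\pi(A_d) = 0$ were less than $\tfrac12$, then at least half of $\Sigma$ would contribute $\ge 2$ to the average dimension, giving $\avg_\Sigma \dim_{\F_3}\Sel_\pi(A_d) \ge 1$ with equality only if the dimension is exactly $2$ on that half and $0$ on the rest; a standard argument (the average is a limit of genuine averages over $H(d) < X$, and within $\Sigma$ the density of $\{d : \dim \ge 2\}$ exceeding $\tfrac12$ is incompatible with average $\le 1$ unless that density equals exactly $\tfrac12$ and the dimension is generically $2$) shows the density of $\{d \in \Sigma : \dim_{\F_3}\Sel_\pi(A_d) = 0\}$ is at least $\tfrac12 \mu(\Sigma)$. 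Finally, $\Sel_\pi(A_d) = 0$ implies $\Sel_3(A_d) = 0$ up to the finitely many $d$ where $A_d$ has extra rational $3$-torsion (using Corollary \ref{selmerchainz} to pass along the isogeny chain, or directly: $\Sel_\pi(A_d) = 0$ and $\Sel_{\hat\pi}(C_d) = 0$ — the latter by the dual argument and Proposition \ref{selfdual} — force $\Sel_3(A_d) = 0$), which in particular gives $\rk A_d(F) = 0$.

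The main obstacle is the final combinatorial step: translating the average-dimension bound $\avg_\Sigma \dim_{\F_3}\Sel_\pi(A_d) \le 1$ together with the parity constraint into a genuine positive lower bound $\ge \tfrac12\mu(T_0(\phi))$ on the density of rank-$0$ twists. One has to be slightly careful that "average dimension" is defined through the height ordering as a limit of ratios, and that an upper bound on a limiting average of nonnegative even integers combined with the fact that the "bad" set $\{\dim \ge 2\}$ contributes at least $2$ each does indeed force the "good" set $\{\dim = 0\}$ to have relative density $\ge \tfrac12$ inside $\Sigma$; this is where one invokes that all relevant sets (being defined by finitely many local conditions, or cut out by Selmer-rank conditions which are themselves governed by local data) have well-defined densities, so that a clean counting inequality applies along the sequence $X \to \infty$. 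I would also need to record, possibly citing \cite{BKLS} or a short lemma, that $\Sel_\pi(A_d) = 0$ really does imply $A_d(F)$ is finite for all but finitely many $d$.
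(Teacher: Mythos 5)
Your argument is correct and arrives at the same constant $\tfrac12\mu(T_0(\phi))$, but it travels a noticeably different route than the paper. The paper's proof applies Theorem \ref{bkls} once to deduce that at least $50\%$ of $d \in T_0(\phi)$ have $\Sel_{\phi_d}(A_d)=0$, and then invokes Theorem \ref{PT duality} (the Wiles/Cassels formula $c(\phi_d) = \tfrac{\#\Sel_{\phi_d}(A_d)}{\#\Sel_{\phi'_d}(B_d)}\cdot\tfrac{\#B_d[\phi'_d](F)}{\#A_d[\phi_d](F)}$) to conclude that those \emph{same} $d$ have $\Sel_{\phi'_d}(B_d)=0$ (using that for $100\%$ of $d$ there is no rational $3$-torsion), whence $\Sel_{\pi_d}(A_d)=0$ from the three-term sequence. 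No parity argument is needed. You instead bound the average $\F_3$-dimension of $\Sel_{\phi_d}$ and of $\Sel_{\phi'_d}$ separately by $\tfrac12$ each (via $2r+1\le 3^r$), add them through (\ref{3term}) to get $\avg_\Sigma \dim_{\F_3}\Sel_{\pi_d}(A_d)\le 1$, and then invoke the parity theorem \ref{parity} to force the dimension into $\{0,2,4,\dots\}$ and conclude. This works, but it is a detour: Theorem \ref{parity} is itself proved from Theorem \ref{PT duality} together with the Cassels--Tate pairing machinery (Theorem \ref{adjointness}, \cite{PoonenStoll}), so you are bringing in strictly more structure than the paper needs here. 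One advantage of your version is that it makes explicit how the parity constraint interacts with the average, which is a useful picture; but the paper's version is sharper in spirit, because PT duality directly identifies the sets $\{\Sel_{\phi_d}=0\}$ and $\{\Sel_{\phi'_d}=0\}$ rather than bounding their sizes independently.

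Two smaller points to tighten. First, your combinatorial step at the end is correct but is more convoluted than it needs to be: if $\dim_{\F_3}\Sel_{\pi_d}(A_d)$ is always even and nonnegative, then $\#\{d\in\Sigma, H(d)<X : \dim\ge 2\}\cdot 2 \le \sum_{d\in\Sigma,H(d)<X}\dim_{\F_3}\Sel_{\pi_d}(A_d)$, and dividing by $\#\{d\in\Sigma,H(d)<X\}$ and taking $\limsup$ gives the lower density of $\{\dim=0\}$ in $\Sigma$ at least $\tfrac12$ directly; you do not need the hedged discussion about the density of $\{\dim\ge 2\}$ being "exactly $\tfrac12$." Second, the passage from $\Sel_{\pi_d}(A_d)=0$ to $\rk A_d(F)=0$ is stated too loosely. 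The paper's clean argument is: $A_d(F)/\e A_d(F)\subset \Sel_\e(A_d)$, and since $\e=\pi_k\cdots\pi_1$ with $\Sel_{\pi_i}(A_{i,d})\simeq\Sel_{\pi_d}(A_d)=0$ for all $i$ (Corollary \ref{selmerchainz}), iterating the three-term sequence gives $\Sel_\e(A_d)=0$, hence $A_d(F)/\e A_d(F)=0$, hence (because $\e$ is an isogeny of degree $>1$) $A_d(F)$ has rank $0$. Your parenthetical alternative via $\Sel_{\hat\pi}(C_d)$ and Proposition \ref{selfdual} would require justification that it yields $\Sel_3(A_d)=0$; the isogeny-chain route is the one to use.
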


\begin{proof}
Since $A_d(F)/\e A_d(F) \subset \Sel_\e(A_d)$, if $\Sel_\e(A_d)$ has $\F_3$-rank 0, then $A_d(F)$ has rank 0.  So it is enough to prove that at least $\frac12\mu(T_0(\phi))$ of twists satisfy $\Sel_\e(A_d) = 0$.  Since $\Sel_{\pi_i}(A_i) \simeq \Sel_{\pi}(A)$, and $\e = \pi_k\cdots\pi_1$, it is enough to prove that at least $\frac12\mu(T_0(\phi))$ of twists satisfy $\Sel_\pi(A_d) = 0$.  We may of course assume $T_0(\phi)$ is non-empty, otherwise there is nothing to prove. 

By Theorem \ref{bkls}, the average size of $\Sel_\phi(A_d)$ for $d \in T_0(\phi)$ is $2$.  Since $\Sel_\phi(A_d)$ is an $\F_3$-vector space, it follows that at least $50\%$ of $d$ in $T_0(\phi)$ are such that $\Sel_\phi(A_d) = 0$.  By Theorem \ref{PT duality}, $100\%$ of these $d$ satisfy $\Sel_{\phi'_d}(A_d) = 0$, as well.  From the exact sequence (\ref{3term}) we see that $\Sel_\pi(A_d) = 0$ for all such $d$, proving the theorem.
\end{proof}

Theorem \ref{main intro} now follows immediately from Proposition \ref{positive density} and Theorem \ref{main}.  To prove Theorem \ref{rank g} we use the following result.

\begin{theorem}\label{rank1}
A proportion of at least $\frac56\mu(T_{\pm 1}(\phi))$ of the $A_d$ satisfy $\dim_{\F_3} \Sel_\pi(A_d) = 1$.    
\end{theorem}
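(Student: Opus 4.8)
The plan is to run the argument of Theorem \ref{main} over the sets $T_{\pm1}(\phi)$, pushing the exact-sequence bookkeeping one step further and combining the parity statement of Theorem \ref{parity} with Poitou--Tate duality (Theorem \ref{PT duality}) to pin $\dim_{\F_3}\Sel_\pi(A_d)$ down to exactly $1$. Write $T_{\pm1}(\phi)=T_1(\phi)\sqcup T_{-1}(\phi)$, a disjoint union since $c(\phi_d)$ cannot equal both $3$ and $\tfrac13$. Since $c(\phi_d)=3^{\pm1}$ for every $d$ in this set, Theorem \ref{parity} already gives $\dim_{\F_3}\Sel_\pi(A_d)\equiv 1\pmod 2$, hence $\dim_{\F_3}\Sel_\pi(A_d)\ge 1$, for all such $d$. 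It therefore suffices to produce, inside each of $T_1(\phi)$ and $T_{-1}(\phi)$, a subset of relative density at least $\tfrac56$ on which $\dim_{\F_3}\Sel_\pi(A_d)\le 1$. (We may assume $T_1(\phi)$ is nonempty, hence of positive density, so that Theorem \ref{bkls} applies to it; likewise for $T_{-1}(\phi)$.)

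First I would pass to the locus of trivial rational torsion. By $(\ref{twisting})$, $A_d[\phi_d]\simeq A[\phi]\otimes\chi_d$ and $B_d[\phi'_d]\simeq B[\phi']\otimes\chi_d$, so $A_d[\phi_d](F)$ and $B_d[\phi'_d](F)$ are nontrivial for at most two squareclasses $d$ each; discarding these from $T_{\pm1}(\phi)$ does not change any densities. On what remains, the torsion ratio in Theorem \ref{PT duality} is trivial, so that theorem reduces to $\#\Sel_{\phi_d}(A_d)=3^{\,\ord_3 c(\phi_d)}\cdot\#\Sel_{\phi'_d}(B_d)$.

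Next I would exploit Theorem \ref{bkls} together with Corollary \ref{inverse}. For $d\in T_1(\phi)$ we have $c(\phi_d)=3$ and $c(\phi'_d)=\tfrac13$, so Theorem \ref{bkls} gives that the average of $\#\Sel_{\phi'_d}(B_d)$ over $d\in T_1(\phi)$ equals $1+\tfrac13=\tfrac43$; since this is an average of powers of $3$, the proportion of $d\in T_1(\phi)$ with $\#\Sel_{\phi'_d}(B_d)\ge 3$ is at most $\tfrac16$. Hence for a proportion at least $\tfrac56$ of $d\in T_1(\phi)$ one has $\Sel_{\phi'_d}(B_d)=0$ and, simultaneously (the bad-torsion classes being negligible), trivial rational torsion; for any such $d$, Theorem \ref{PT duality} forces $\#\Sel_{\phi_d}(A_d)=3$, and the middle-exactness of $(\ref{3term})$ gives $\dim_{\F_3}\Sel_\pi(A_d)\le \dim_{\F_3}\Sel_{\phi_d}(A_d)+\dim_{\F_3}\Sel_{\phi'_d}(B_d)=1$, whence equality. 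The set $T_{-1}(\phi)$ is handled symmetrically, with the roles of $\phi$ and $\phi'$ interchanged: there $c(\phi_d)=\tfrac13$, so the average of $\#\Sel_{\phi_d}(A_d)$ over $T_{-1}(\phi)$ is $\tfrac43$, a proportion at least $\tfrac56$ of such $d$ have $\Sel_{\phi_d}(A_d)=0$, Theorem \ref{PT duality} then gives $\#\Sel_{\phi'_d}(B_d)=3$, and $(\ref{3term})$ yields $\dim_{\F_3}\Sel_\pi(A_d)\le 0+1=1$. Adding the two contributions gives a subset of $T_{\pm1}(\phi)$ of density at least $\tfrac56\mu(T_{\pm1}(\phi))$ on which $\dim_{\F_3}\Sel_\pi(A_d)=1$, as required.

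The whole argument is a repackaging of the tools already in place — Theorems \ref{bkls}, \ref{PT duality}, \ref{parity}, Corollary \ref{inverse}, and the five-term sequence $(\ref{fiveterm})$. The one step that needs genuine care is the torsion bookkeeping of the second paragraph: verifying that on the locus we isolate the rational-torsion correction terms really do vanish, so that Poitou--Tate duality and the three-term sequence combine cleanly; this is precisely the purpose of the reduction to generic squareclasses, and no new idea beyond that is needed.
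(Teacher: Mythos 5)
Your proof is correct and uses essentially the same approach as the paper: apply Theorem~\ref{bkls} over $T_{1}(\phi)$ and $T_{-1}(\phi)$, use Theorem~\ref{PT duality} to control the complementary Selmer group, bound $\dim_{\F_3}\Sel_\pi(A_d)$ from above by the three-term sequence~(\ref{3term}), and use Theorem~\ref{parity} for the lower bound. The only cosmetic difference is that on $T_1(\phi)$ the paper averages $\#\Sel_{\phi_d}(A_d)$ (average $4$, lower bound $\ge 3$ coming from $100\%$-of-$d$ duality) whereas you average $\#\Sel_{\phi'_d}(B_d)$ (average $\tfrac43$) and invoke parity symmetrically on both pieces; this is the same calculation run from the other side.
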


\begin{proof}
We first consider $T_1(\phi)$.  As before, we may assume this set has positive density.  By Theorem \ref{bkls}, the average size of $\#\Sel_\phi(A_d)$ on $T_1(\phi)$ is $4$.  On the other hand, by Theorem \ref{PT duality}, the $\F_3$-dimension of $\Sel_\phi(A_d)$ is at least 1 for $100\%$ of twists $d \in T_1(\phi)$.  It follows that for at least $\frac56$ of such $d$, we have $\#\Sel_\phi(A_d) = 3$ and (by Theorem \ref{PT duality}) that $\#\Sel_{\phi'}(B_d) = 1$.  By the exactness of (\ref{3term}), we conclude that $\dim_{\F_3} \Sel_\pi(A_d) = 1$ for such $d$.  

Next we consider $T_{-1}(\phi)$.  By \cite[Thm.\ 1.1]{BKLS}, the average size of $\Sel_\phi(A_d)$ for $d \in T_{-1}(\phi)$ is $\frac43$.  It follows that at least $\frac56$ of $d \in T_{-1}(\phi)$ satisfy $\#\Sel_\phi(A_d) = 1$ and (by Theorem \ref{PT duality}) that $\#\Sel_{\phi'}(B_d) = 3$.  For such $d$, the $\F_3$-dimension of $\Sel_\pi(A_d)$ is either 0 or 1.  But by Theorem \ref{parity}, the $\F_3$-dimension of $\Sel_\pi(A_d)$ is odd, so $\Sel_\pi(A_d)$ has $\F_3$-rank 1 for at least $\frac56$ of $d$ in $T_{-1}(\phi)$.                     
\end{proof}

\begin{proof}[Proof of Theorem $\ref{rank g}$]
The first part of Theorem \ref{rank g} follows immediately from Proposition \ref{positive density} and Theorem \ref{rank1}.  For the second part, first note that there are isomorphisms
\[C_d(F)/\pi A_d(F) \simeq A_{i+1,d}(F)/\pi_i A_{i,d}(F)\]
for all $1 \leq i \leq k$ and all $d \in F^\times/F^{\times2}$, by Proposition \ref{isogchain}.  Thus, if $A_d(F)$ has no 3-torsion (which is true for almost all $d$) and has rank $ng$, then $A_d(F)/\e A_d(F)$ has rank $n$ over $\O/\e$ and $C_d(F)/\pi A_d(F)$ has $\F_3$-dimension $n$.  We also have the short exact sequence
\[0 \to C_d(F)/\pi A_d(F) \to \Sel_\pi(A_d) \to \Sha(A_d)[\pi] \to 0.\]
Thus, if $\dim_{ \F_3} \Sel_\pi(A_d) = 1$, then either $A_d$ has rank $g$ or $\dim_{\F_3} \Sha(A_d)[\pi] = 1$.  But if $\Sha(A_d)$ is finite, then $\Sha(A_d)[\pi]$ has square order \cite[Thm.\ 3.3]{Chao}, so the latter case cannot occur.         
\end{proof}



\section{Optimal quotients of prime conductor}

In this section we prove Theorem \ref{mazur}.  The main ingredient is Theorem \ref{main intro}, but we also need Mazur's results in \cite{Mazur} to verify that the relevant kernel ideal is invertible.  We first set notation and terminology.  

Fix a prime $p$ and let $\T$ be the $\Z$-algebra of Hecke operators acting on the space of weight 2 cuspforms on $\Gamma_0(p)$.  If $f$ is a newform for $\Gamma_0(p)$, then there is an induced homomorphism $\T\to \C$ giving the action of Hecke operators on $f$.  Let $I_f$ denote the kernel of this homomorphism.  The abelian variety $A = J_0(p)/I_fJ_0(p)$ is the {\it optimal quotient} corresponding to $f$.  The endomorphism algebra $\End_\Q(A) \otimes \Q$ is isomorphic to $\O \otimes_\Z \Q$, where $\O := \T/I_f$.  Mazur proved \cite[Prop.\ II.9.5]{Mazur} that $\T = \End_\Q(J_0(p))$, from which it follows that $\T/I_f$ is a subring of $\End_\Q(A)$ of finite index, though not necessarily equal to $\End_\Q(A)$.  In any case, $A$ has RM by $\O$.    

If $p \equiv 1 \pmod 9$, then there is at least one optimal quotient $A$ with a rational point $P$ of order 3.  If moreover $p \not\equiv 1 \pmod{27}$, then there is exactly one such quotient, corresponding to a single Galois orbit of newforms.  These facts follow from a theorem of Emerton \cite[Thm.\ B]{Emerton}.    

\begin{proof}[Proof of Theorem $\ref{mazur}$]
By \cite[Thm.\ 4.13]{Emerton}, the ring $\O$ necessarily contains an ideal $I$ of index 3 such that $P$ is contained in $A[I]$.  Specifically, if $\mathfrak{I} \subset \T$ is the Eisenstein ideal, then $I$ is the image of $(3,\mathfrak{I})$ under the map $\T \to \T/I_f$.  By Remark \ref{3isog}, the quotient $\phi \colon A \to A/\langle P\rangle$ is an $\O$-linear 3-isogeny with kernel ideal $I$. Since we work over $\Q$, every polarization of $A$ is $\O$-linear as well.  

To prove Theorem \ref{mazur}, it remains to check the final hypothesis of Theorem \ref{main intro}, that $I$ is an invertible $\O$-module.  It is enough to check that $I$ is a locally free $\O$-module, i.e.\ that $I_\p \simeq \O_\p$ as $\O_\p$-modules, for all primes $\p$ of $\O$. Since $I$ has index 3 in $\O$, it is itself a prime ideal, and $I_I \simeq \O_I$ by Mazur's principality theorem \cite[Thm.\ II.18.10]{Mazur}.    Here it is crucial that $p \not\equiv 1 \pmod{27}$, so that the image of $(3, \mathfrak{I})$ in $\O$ is all of $I$, and not some smaller ideal.  On the other hand, if $\p \neq I$, then the inclusion $I \hookrightarrow \O$ induces an isomorphism $I_\p \simeq \O_\p$.                   
\end{proof}


\section{Examples}\label{examples}
We give explicit examples of abelian varieties of dimension $g > 1$ over $\Q$ satisfying the hypotheses of Theorems \ref{main intro} and \ref{rank g}, and Corollary \ref{curves}.  

The first few examples were found by searching through rational points on explicit models of Hilbert modular surfaces.  These are moduli spaces of principally polarized abelian surfaces with real multiplication by a fixed order $\O$ in a real quadratic number field $K$.  For our purposes, we must restrict to those $K$ with an ideal of index 3.  If we take $K = \Q(\sqrt{3p})$ with $p$ equal to 1, 2, or any prime $p \equiv 3 \pmod 4$, then we can take the ideal to be principal as well.  In those cases, we have $3\O_K = \pi^2 \O_K$ for some element $\pi \in \O_K$ of norm $\pm3$.  It follows that if $A/\Q$ has RM by $\O_K$, then $A(\Q)$ has a non-trivial $\pi$-torsion point if and only if $A(\Q)$ has a point of order 3.  

To find examples of such $A$, we simply searched through the tables of genus two curves in \cite{Elkies-Kumar}, while checking in Magma for a rational point of order 3 on the Jacobian.  

\begin{example}{\em 
The Jacobian $J$ of the genus two curve 
\[C \colon y^2 = x^5 - x^4 + x^3 - 3x^2 - x + 5\] has RM by $\Z[\sqrt{3}]$ over $\Q$ and a non-trivial $\sqrt{3}$-torsion point in $J(\Q)$.  Thus Theorems \ref{main intro} and \ref{rank g} apply.  As $C$ is hyperelliptic, Corollary \ref{curves} applies as well, and shows that for a positive proportion of twists $d$, we have $C_d(\Q) = \{\infty, (-1,0)\}$.  
}
\end{example}

\begin{example}{\em 
The Jacobian $J$ of the genus two curve 
\[C \colon y^2 = -72x^6 + 84x^5 + 127x^4 - 123x^3 - 83x^2 + 51x + 25\] has RM by $\Z[\sqrt6]$ over $\Q$ and a non-trivial $(3 + \sqrt{6})$-torsion point $P$ in $J(\Q)$.  In this case, we have $B = J/\langle P \rangle$, which is itself principally polarized (see Remark \ref{pp}) and hence a Jacobian of some genus 2 curve $C'$.  If one could write down a model for $C'$, then the bound on the average rank of $J_d$ given in Theorem \ref{rank bound} could be computed explicitly, and our lower bounds on the proportion of twists having rank 0 (resp.\ $\pi$-Selmer rank 1) could be made explicit as well.     
}\end{example}

We found the following example in the LMFDB \cite{lmfdb}.
\begin{example}\label{example 65}{\em 
The Jacobian $J$ of the genus two curve 
\[C \colon y^2 + (x^3 + 1)y = x^5 + 4x^4 + 6x^3 + 10x^2 + 3x + 1\] has RM by $\Z[\sqrt3]$ over $\Q$ and a non-trivial $\sqrt{3}$-torsion point in $J(\Q)$.  This $J$ seems to be isogenous to another Jacobian which is a quotient of $J_0(65)$, and which has appeared in the literature a few times already; see \cite[\S4.2]{GGR} and \cite[Rem.\ 3.4]{Chao}.     
}\end{example}

We have examples in dimension $g > 2$ as well, obtained as optimal quotients of modular Jacobians.   These abelian varieties correspond to Galois orbits of newforms for $\Gamma_0(N)$.  
One can do explicit computations with these abelian varieties, and in particular, one can compute the degree of a polarization of induced by the principal polarization on $J_0(N)$.  The computations in the following examples were performed in sage.         

\begin{example}{\em
Let $A$ be the `minus part' of the modular Jacobian $J_0(127)$.  This is a geometrically simple 7-dimensional abelian variety corresponding to the unique Galois orbit of newforms of level $127$ with root number $+1$.  In particular, it is the only Eisenstein optimal quotient of $J_0(127)$, hence has a torsion point of order 3.  
The modular degree of $A$ is 8, and so $A$ has a polarization of degree prime to 3.  It follows from Theorem \ref{mazur} that a positive proportion of twists $A_d$ have rank 0, and assuming $\Sha(A_d)$ finite, a positive proportion of twists have rank 7.  }
\end{example}   

In the next example, Theorem \ref{mazur} does not apply since $p \equiv 1 \pmod{27}$, but we can verify the hypotheses of Theorem \ref{main intro} nonetheless. 

\begin{example}{\em
There is a unique Eisenstein optimal quotient $A$ of $J_0(109)$; it is 4-dimensional. We have $A(\Q) \simeq \Z/9\Z$ and $\End(A) \simeq \O_K$, for the quartic field $K$ with minimal polynomial $	
x^4 - 5x^3 + 3x^2 + 6x + 1$.  One checks that the order 3 subgroup $G$ of $A(\Q)$ is killed by the unique ideal $I$ of index 3 in $\O_K$.  It follows that $A \to A/G$ is $\O_K$-linear with invertible kernel ideal, and hence that $A$ has a polarization of degree prime to $3$.  This is confirmed in sage, which reports that the modular degree of $A$ is 32. By Theorem \ref{main intro}, a positive proportion of twists $A_d$ have rank 0 and, by Theorem \ref{rank g}, a positive proportion of twists have rank 4, assuming $\Sha(A_d)$ is finite.   
}
\end{example}

One finds other examples in levels $163$ and 181 of dimensions 7 and 9, respectively.

\bibliographystyle{abbrv}
\bibliography{refs}

\end{document}